\title{Robust approachability and regret minimization in games with partial monitoring}
\keywords{Approachability; repeated games; partial monitoring; regret; online learning }
\newcommand{\bB}{\mathbf{B}}
\newcommand{\cA}{\mathcal{A}}
\newcommand{\cB}{\mathcal{B}}
\newcommand{\cI}{\mathcal{I}}
\newcommand{\cJ}{\mathcal{J}}
\newcommand{\cC}{\mathcal{C}}
\newcommand{\cG}{\mathcal{G}}
\newcommand{\cH}{\mathcal{H}}
\newcommand{\cF}{\mathcal{F}}
\newcommand{\cK}{\mathcal{K}}
\newcommand{\cS}{\mathcal{S}\bigl( \R^d \bigr)}
\newcommand{\R}{\mathbb{R}}
\newcommand{\E}{\mathbb{E}}
\newcommand{\ol}{\overline}
\renewcommand{\mathbf}{\boldsymbol}
\newcommand{\bp}{\mathbf{p}}
\newcommand{\bq}{\mathbf{q}}
\newcommand{\bx}{\mathbf{x}}
\newcommand{\by}{\mathbf{y}}
\newcommand{\bu}{\mathbf{u}}
\newcommand{\bv}{\mathbf{v}}
\newcommand{\tH}{\widetilde{H}}
\newcommand{\norm}[1][\,\cdot\,]{\ensuremath{\left\Arrowvert #1 \right\Arrowvert}}
\newcommand{\bnorm}[1][\,\cdot\,]{\ensuremath{\bigl\Arrowvert #1 \bigr\Arrowvert}}
\newcommand{\Bnorm}[1][\,\cdot\,]{\ensuremath{\Bigl\Arrowvert #1 \Bigr\Arrowvert}}
\newcommand{\om}{\overline{m}}
\renewcommand{\epsilon}{\varepsilon}
\newcommand{\eps}{\varepsilon}
\renewcommand{\leq}{\leqslant}
\renewcommand{\geq}{\geqslant}
\renewcommand{\hat}{\widehat}
\newcommand{\wh}{\widehat}
\newcommand{\e}{\varepsilon}
\newcommand{\RAC}{\mbox{RAC}}
\newcommand{\oom}{\overline{\overline{m}}}
\newcommand{\ind}{\mathbb{I}}
\newcommand{\steps}[2]{\textbf{Step #1: Assertion}~(\ref{#2})\textbf{.}~~}
\newcommand{\ext}{\mbox{\rm \scriptsize{ext}}}
\newcommand{\cone}{\mbox{\rm \scriptsize{cone}}}
\newcommand{\inter}{\mbox{\rm \scriptsize{int}}}
\newcommand{\swap}{\mbox{\rm \scriptsize{\rm swap}}}
\newcommand{\bth}{\mathbf{\theta}}
\DeclareMathOperator{\co}{co}
\newcommand{\tm}{\widetilde{m}}
\newcommand{\tr}{\widetilde{r}}
\renewcommand{\P}{\mathbb{P}}
\newcommand{\apm}{\eqref{eq:APM}}
\newcommand{\APM}{\mbox{APM}}
\newcommand{\sigdark}{\clubsuit}
\newcommand{\sigother}{\heartsuit}
\newcommand{\wt}{\widetilde}
\newcommand{\citemor}[1]{\citeauthor{#1}~\cite{#1}}
\newtheorem{condition}{\sc Condition}
\begin{document}
\setcitestyle{numbers}

\maketitle

\begin{abstract}
Approachability has become a standard tool in analyzing learning algorithms in the adversarial online learning setup.
We develop a variant of approachability for games where there is ambiguity in the obtained reward that belongs to a set, rather than being a single vector. Using this variant we tackle the problem of approachability in games with partial monitoring and develop simple and efficient
algorithms (i.e., with constant per-step complexity) for this setup.
We finally consider external regret and internal regret in repeated games with partial monitoring and  derive
regret-minimizing strategies based on approachability theory.
\end{abstract}
\normalsize

\section{Introduction.}

Blackwell's approachability theory and its variants has become a standard and useful tool in analyzing online learning algorithms (\citemor{CBL06})
and algorithms for learning in games (\citeauthor{HM00}~\cite{HM00,HM01}). The first application of Blackwell's approachability to learning in the online setup is due to Blackwell~\cite{Bla56b} himself.
Numerous other contributions are summarized in the monograph by~\citemor{CBL06}.
Blackwell's approachability theory enjoys a  clear geometric interpretation that allows it to be used in situations
where online convex optimization or exponential weights do not seem to be easily applicable and, in some sense,
to go beyond the minimization of the regret and/or to control quantities of a different flavor;
e.g., in the article by~\citemor{MannorTsitsiklisYu09}, to minimize the regret together with path constraints, and in
the one by~\citemor{MShimkin08Regret}, to minimize the regret in games whose stage duration is not fixed.
Recently, it has been shown by~\citemor{ABH10} that approachability and low regret learning are
equivalent in the sense that efficient reductions exist from one to the other.
Another recent paper by~\citemor{RST10} showed that approachability
can be analyzed from the perspective of learnability using tools from learning theory.


In this paper we consider approachability and online learning with partial monitoring in games against Nature. In partial monitoring the decision maker does not know how much reward was obtained and only gets a (random) signal whose distribution depends on the action of the decision maker and the action of Nature. There are two extremes of this setup that are well studied. On the one extreme we have the case where the signal includes the reward itself (or a signal that can be used to unbiasedly estimate the reward), which is essentially the celebrated bandits setup.
The other extreme is the case where the signal is not informative (i.e., it tells the decision maker nothing about the actual reward obtained); this setting then essentially consists of repeating the same situation over and over again, as no information
is gained over time. We consider a setup encompassing these situations and more general ones, in which the signal is indicative of the actual reward, but is not necessarily a sufficient statistics thereof.
The difficulty is that the decision maker cannot compute the actual reward he obtained nor the actions of Nature.

Regret minimization with partial monitoring has been studied in several papers in the learning theory community.
\citemor{PiSc01}, \citemor{MaSh03}, \citemor{CeLuSt06}
study special cases where an accurate estimation of the rewards (or worst-case rewards) of the decision maker is possible thanks to some extra structure.
A general policy with vanishing regret is presented by~\citemor{LMS08}. This policy is based on exponential weights and a specific estimation procedure for the (worst-case) obtained rewards. In contrast, we provide approachability-based results for the problem of regret minimization. On route, we define a new type of approachability setup, with enables to re-derive
the extension of approachability to the partial monitoring vector-valued setting proposed by~\citemor{Per11}.
More importantly, we provide concrete algorithms
for this approachability problem that are more efficient in the sense that, unlike previous works in the domain,
their complexity is constant over all steps. Moreover, their rates of convergence are independent of the game at hand, as in
the seminal paper by~\citemor{Bla56b} but for the first time in this general framework. For example, the recent purely theoretical (and fairly technical) study of approachability
\citemor{PQ12}, which is based on somehow related arguments, does neither provide rates of convergence nor concrete algorithms for this matter.

\paragraph{Outline.} The paper is organized as follows.
In Section \ref{se:basics} we recall some basic facts from approachability theory in the standard vector-valued games setting where a decision maker is engaged in a repeated vector-valued game against an arbitrary opponent (or ``Nature").
In Section \ref{se:robapproach} we propose a novel setup for approachability, termed ``robust approachability,'' where instead of obtaining a vector-valued reward, the decision maker obtains a set, that represents the ambiguity concerning his reward. We provide a simple characterization of approachable convex sets and an algorithm for the set-valued reward setup under the assumption that the set-valued reward functions are linear. In Section \ref{sec:robappr-gal} we extend the robust approachability setup to problems where the set-valued reward functions are not linear, but rather concave in the mixed action of the decision maker and convex in the mixed action of Nature.
In Section \ref{se:apptogames} we show how to apply the robust approachability framework to the repeated vector-valued games with partial monitoring.
In Section \ref{sec:main} we consider a special type of games where the signaling structure possesses a special property,
called bi-piecewise linearity, that can be exploited to derive efficient strategies.
This type of games is rich enough as it encompasses several useful special cases.
In Section \ref{se:bipiecewise} we provide a simple and constructive algorithm for theses games.
Previous results for approachability in this setup were either non-constructive (\citemor{Rus99}) or were highly
inefficient as they relied on some sort of lifting to the space of probability measures on mixed actions (\citemor{Per11}) and typically required a grid that is progressively refined
(leading to a step complexity that is exponential in the number $T$ of past steps).
In Section \ref{se:apptoregretmin} we apply our results for both external-regret and internal-regret minimization in repeated games with partial monitoring. In both cases our proofs are simple, lead to algorithms with constant complexity at each step, and are accompanied with rates. Our results for external regret have rates similar to the ones obtained by~\citemor{LMS08},
but our proof is direct and simpler.
In Section \ref{sec:galstruc} we mention the general signaling case and explain how it is possible to approach certain special sets such as polytopes efficiently and general convex sets although inefficiently.

\section{Some basic facts from approachability theory.}
\label{se:basics}

In this section we recall the most basic versions of Blackwell's approachability theorem
for vector-valued payoff functions.

We consider a vector-valued game between two players, a decision maker (first player) and Nature
(second player), with respective finite action sets $\cA$ and $\cB$, whose
cardinalities are referred to as $N_\cA$ and $N_\cB$.
We denote by $d$ the dimension of the reward vector
and equip $\R^d$ with the $\ell^2$--norm $\norm_2$.
The payoff function of the first player is given by
a mapping $m : \cA \times \cB \to \R^d$,
which is multi-linearly extended to $\Delta(\cA) \times \Delta(\cB)$,
the set of product-distributions over $\cA \times \cB$.

We consider two frameworks, depending on whether pure or mixed actions are taken.

\paragraph{Pure actions taken and observed.} We
denote by $A_1,\, A_2, \, \ldots$ and $B_1, \, B_2, \, \ldots$ the actions in $\cA$
and $\cB$ sequentially taken by each player; they are possibly given by randomized strategies,
i.e., the actions $A_t$ and $B_t$ were obtained by random draws according to respective probability
distributions denoted by $\bx_t \in \Delta(\cA)$ and $\by_t \in \Delta(\cB)$.
For now, we assume that the first player has a full or bandit monitoring of the pure actions taken
by the opponent player: at the end of round $t$, when receiving the payoff $m(A_t,B_t)$,
either the pure action $B_t$ (full monitoring) or only the indicated payoff (bandit monitoring)
is revealed to him.

\begin{definition}
\label{def:pure}
A set $\cC \subseteq \R^d$  is {\em $m$--approachable with pure actions} if there exists a
strategy of the first player such that, for all $\eps > 0$,
there exists an integer $T_\eps$ such that for all strategies of the second player,
\[
\P \left\{ \forall \, T \geq T_\eps, \quad \inf_{c \in \cC} \ \norm[c - \frac{1}{T} \sum_{t=1}^T m \bigl( A_t,B_t \bigr)]_2 \leq \eps \right\}
\ \geq 1-\eps\,.
\]
In particular, the first player has a strategy that ensures that the average of his vector-valued payoffs
converges almost surely to the set $\cC$ (uniformly with respect to the strategies of the second player).
\end{definition}

The above convergence will be achieved in the course of this paper under two forms.
Most often we will exhibit strategies such that, for all strategies of the second player,
for all $\delta > 0$, with probability at least $1-\delta$,
\[
\inf_{c \in \cC} \ \norm[c - \frac{1}{T} \sum_{t=1}^T m \bigl( A_t,B_t \bigr)]_2 \leq
\beta(T,\delta)\,.
\]
A union bound shows that such strategies $m$--approach $\cC$
as soon as there exists a positive sequence $\eps_T$ such that $\sum \eps_t$ is finite
and $\beta(T,\eps_T) \to 0$. Sometimes we will also deal with strategies directly ensuring
that, for all strategies of the second player,
for all $\delta > 0$, with probability at least $1-\delta$,
\[
\sup_{\tau \geq T} \ \inf_{c \in \cC} \ \norm[c - \frac{1}{\tau} \sum_{t=1}^\tau m \bigl( A_t,B_t \bigr)]_2 \leq
\beta(T,\delta)\,.
\]
Such strategies $m$--approach $\cC$ as soon as $\beta(T,\delta) \to 0$ for all $\delta > 0$.

\paragraph{Mixed actions taken and observed.} In
this case, we denote by $\bx_1,\, \bx_2, \, \ldots$ and $\by_1, \, \by_2, \, \ldots$ the actions in $\Delta(\cA)$
and $\Delta(\cB)$ sequentially taken by each player. We also assume a full or bandit monitoring for the first player:
at the end of round $t$, when receiving the payoff $m(\bx_t,\by_t)$,
either the mixed action $\by_t$ (full monitoring) or the indicated payoff (bandit monitoring)
is revealed to him.

\begin{definition}
A set $\cC \subseteq \R^d$  is {\em $m$--approachable with mixed actions} if there exists a
strategy of the first player such that, for all $\eps > 0$,
there exists an integer $T_\eps$ such that for all strategies of the second player,
\[
\P \left\{ \forall \, T \geq T_\eps, \quad \inf_{c \in \cC} \ \norm[c - \frac{1}{T} \sum_{t=1}^T m \bigl( \bx_t,\by_t \bigr)]_2 \leq \eps \right\}
\ \geq 1-\eps\,.
\]
\end{definition}

As indicated below, in this setting the first player may even have deterministic strategies such that,
for all (deterministic or randomized) strategies of the second player,
\[
\inf_{c \in \cC} \ \norm[c - \frac{1}{T} \sum_{t=1}^T m \bigl( \bx_t,\by_t \bigr)]_2 \leq \beta(T)
\]
with probability $1$, where $\beta(T) \to 0$.

\paragraph{Necessary and sufficient condition for approachability.} For
closed convex sets there is a simple characterization of approachability that is a
direct consequence of the minimax theorem; the condition is the same for the two settings,
whether pure or mixed actions are taken and observed.

\begin{theorem}[Theorem~3 of \citemor{Bla56}]
\label{th:appr}
A closed convex set $\cC \subseteq \R^d$ is approachable (with pure or mixed actions) if and only if
\[
\forall \, \by \in \Delta(\cB), \ \ \exists \, \bx \in \Delta(\cA), \qquad \quad
m(\bx,\by) \in \cC\,.
\]
\end{theorem}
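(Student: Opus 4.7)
For necessity, I would argue by contraposition: if $m(\bx, \by_0) \notin \cC$ for every $\bx \in \Delta(\cA)$ and some fixed $\by_0 \in \Delta(\cB)$, then the image $K = \{m(\bx, \by_0) : \bx \in \Delta(\cA)\}$ is convex (by linearity of $m(\,\cdot\,, \by_0)$) and compact, hence strictly separated from the closed convex set $\cC$: there is $\eta > 0$ with $\inf_{k \in K,\, c \in \cC}\norm[k - c]_2 \geq \eta$. Now have Nature play $\by_0$ at every round. In the mixed setting, each $m(\bx_t, \by_0)$ lies in $K$, hence so does the average, which therefore stays at distance $\geq \eta$ from $\cC$; in the pure setting, $\E[m(A_t, B_t) \mid \cF_{t-1}] = m(\bx_t, \by_0) \in K$, and a bounded-martingale (Azuma--Hoeffding) argument shows the empirical average remains $\eta/2$-away from $\cC$ with high probability. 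Either way $\cC$ fails to be approachable.

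For sufficiency, I would use Blackwell's potential-based strategy with $\Phi_t = \norm[\bar{m}_t - \Pi_\cC(\bar{m}_t)]_2^2$, where $\bar{m}_t$ is the running average payoff and $\Pi_\cC$ the Euclidean projection onto $\cC$. At round $t+1$, if $\bar{m}_t \notin \cC$, set $c_t = \Pi_\cC(\bar{m}_t)$ and choose $\bx_{t+1}$ satisfying
\[
\bigl\langle m(\bx_{t+1}, \by) - c_t, \ \bar{m}_t - c_t \bigr\rangle \leq 0 \qquad \text{for every } \by \in \Delta(\cB).
\]
Existence of such $\bx_{t+1}$ is the crux of the argument: for each $\by$, the halfspace hypothesis supplies $\bx^\by$ with $m(\bx^\by, \by) \in \cC$, and since $c_t$ is the projection of $\bar{m}_t$ on $\cC$, the hyperplane through $c_t$ with normal $\bar{m}_t - c_t$ supports $\cC$ on the side opposite to $\bar{m}_t$; hence $f(\bx^\by, \by) \leq 0$ for the bilinear payoff $f(\bx, \by) = \langle m(\bx, \by) - c_t, \bar{m}_t - c_t \rangle$. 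The upper value of this zero-sum game is thus $\leq 0$, and von Neumann's minimax theorem transfers this to the lower value, producing the desired $\bx_{t+1}$.

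Expanding $t(\bar{m}_t - c_{t-1}) = (t-1)(\bar{m}_{t-1} - c_{t-1}) + (m_t - c_{t-1})$ and using $\Phi_t \leq \norm[\bar{m}_t - c_{t-1}]_2^2$ (since $c_t$ is the closest point of $\cC$ to $\bar{m}_t$) yields the standard recursion
\[
t^2 \Phi_t \leq (t-1)^2 \Phi_{t-1} + 2(t-1)\bigl\langle m_t - c_{t-1}, \bar{m}_{t-1} - c_{t-1}\bigr\rangle + \bigl\| m_t - c_{t-1}\bigr\|_2^2,
\]
where $m_t = m(\bx_t, \by_t)$ (mixed setting) or $m(A_t, B_t)$ (pure setting). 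In the mixed case, the middle term is $\leq 0$ by construction, so telescoping and the boundedness of $m$ give $\Phi_t = O(1/t)$ deterministically. In the pure case, the middle term has nonpositive conditional expectation and the residual forms a bounded-increment martingale, so Azuma--Hoeffding together with the telescoping gives $\Phi_t \to 0$ with high probability at the usual rate. The main technical delicacy lies in this concentration step; the geometric construction and the minimax transfer are otherwise a clean consequence of the halfspace assumption.
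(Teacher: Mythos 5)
Your proposal is correct and is precisely the classical Blackwell argument that the paper invokes by citation (necessity via strict separation of the compact convex image $\bigl\{m(\bx,\by_0) : \bx \in \Delta(\cA)\bigr\}$ from $\cC$ with Nature playing $\by_0$ forever, sufficiency via the projection-based minimax strategy given in~\eqref{eq:minmax} and the standard recursion on $t^2\Phi_t$, plus Azuma--Hoeffding in the pure-action case). The only nitpick is that your condition directly bounds the lower value $\sup_{\by}\inf_{\bx} f \leq 0$ and the minimax theorem transfers this to the upper value $\inf_{\bx}\sup_{\by} f$, not the other way around as you wrote; the substance is unaffected.
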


\paragraph{An associated strategy (that is efficient depending
on the geometry of $\cC$).} Blackwell
suggested a simple strategy with a geometric flavor; it only
requires a bandit monitoring.

Play an arbitrary $\bx_1$. For $t \geq 1$, given the vector-valued quantities
\[
\hat{m}_t = \frac{1}{t} \sum_{s=1}^t m(A_s,B_s)
\qquad \mbox{or} \qquad
\hat{m}_t = \frac{1}{t} \sum_{s=1}^t m(\bx_s,\by_s)\,,
\]
depending on whether pure or mixed actions are taken and observed,
compute the projection $c_t$ (in $\ell^2$--norm) of $\hat{m}_t$ on $\cC$.
Find a mixed action $\bx_{t+1}$ that solves the minimax equation
\begin{equation}
\label{eq:minmax}
\min_{\bx \in \Delta(\cA)} \, \max_{\by \in \Delta(\cB)} \,\, \bigl\langle \hat{m}_t - c_t , \, m(\bx,\by) \bigr\rangle\,,
\end{equation}
where $\langle \,\cdot\,, \,\cdot\, \rangle$ is the Euclidian inner product in $\R^d$.
In the case when pure actions are taken and observed, draw $A_{t+1}$ at random according to $\bx_{t+1}$.

The minimax problem used above to determine $\bx_{t+1}$ is easily seen to
be a (scalar) zero-sum game and is therefore efficiently solvable using, e.g., linear programming: the associated
complexity is polynomial in $N_\cA$ and $N_\cB$. All in all, this strategy is efficient if the computations of
the required projections onto $\cC$ in $\ell^2$--norm can be performed efficiently.

The strategy presented above enjoys the following rates of convergence
for approachability.

\begin{theorem}[Theorem~3 of \citemor{Bla56}; Theorem~II.4.3 of \citemor{MeSoZa94}]
\label{th:Bla}
We denote by $M$ a bound in norm over $m$, i.e.,
\[
\max_{(a,b) \in \cA \times \cB} \bnorm[m(a,b)]_2 \leq M\,.
\]
With mixed actions taken and observed, the above strategy ensures that
for all strategies of the second player, with probability 1,
\[
\inf_{c \in \cC} \ \norm[c - \frac{1}{T} \sum_{t=1}^T m \bigl( \bx_t, \by_t \bigr)]_2 \ \leq \frac{2M}{\sqrt{T}}\,;
\]
while with pure actions taken and observed, for all $\delta \in (0,1)$ and for all
strategies of the second player, with probability at least $1-\delta$,
\[
\sup_{\tau \geq T} \ \inf_{c \in \cC} \ \norm[c - \frac{1}{\tau} \sum_{t=1}^\tau m \bigl( A_t,B_t \bigr)]_2 \ \leq
2M \sqrt{\frac{2}{\delta T}}\,.
\]
\end{theorem}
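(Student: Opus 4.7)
The approachability hypothesis (Theorem~\ref{th:appr}) guarantees that for every $\by\in\Delta(\cB)$ there is some $\bx\in\Delta(\cA)$ with $m(\bx,\by)\in\cC$, so the value of the minimax problem~\eqref{eq:minmax} defining $\bx_{t+1}$ satisfies
$$
\min_{\bx} \max_{\by}\, \bigl\langle \hat{m}_t - c_t,\, m(\bx,\by) \bigr\rangle
\;\leq\; \max_{c\in\cC}\, \bigl\langle \hat{m}_t - c_t,\, c \bigr\rangle
\;=\; \bigl\langle \hat{m}_t - c_t,\, c_t \bigr\rangle,
$$
the last equality being the standard supporting-hyperplane property of the Euclidean projection $c_t$ of $\hat{m}_t$ onto the closed convex set $\cC$. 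Applying the minimax theorem to the scalar zero-sum game on the left, the optimizer $\bx_{t+1}$ returned by~\eqref{eq:minmax} therefore fulfils
$$
\bigl\langle \hat{m}_t - c_t,\; m(\bx_{t+1},\by) - c_t \bigr\rangle \;\leq\; 0 \qquad \text{for every } \by\in\Delta(\cB).
$$

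\textbf{Step 2 (Mixed-action rate).} From $\hat{m}_{t+1} - c_t = \tfrac{t}{t+1}(\hat{m}_t - c_t) + \tfrac{1}{t+1}\bigl(m(\bx_{t+1},\by_{t+1}) - c_t\bigr)$, I would expand the squared $\ell^2$--norm; the cross term is nonpositive by Step~1 applied at $\by=\by_{t+1}$, while the last term is bounded by $(2M)^2/(t+1)^2$ once $\cC$ is replaced (without loss of generality) by $\cC\cap\overline{\co}\{m(a,b)\}$, which contains $c_t$. Combining with $d_{t+1} := \inf_{c\in\cC}\|\hat{m}_{t+1}-c\|_2 \leq \|\hat{m}_{t+1}-c_t\|_2$ yields the one-step recursion
$$
(t+1)^2\, d_{t+1}^2 \;\leq\; t^2\, d_t^2 + 4M^2,
$$
and telescoping immediately gives $T\, d_T^2 \leq 4M^2$, i.e.\ $d_T \leq 2M/\sqrt T$.

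\textbf{Step 3 (Pure-action rate).} Let $\by_{t+1}$ denote the conditional law of $B_{t+1}$ given the past and set $\xi_{t+1} = m(A_{t+1},B_{t+1}) - m(\bx_{t+1},\by_{t+1})$, a martingale-difference vector bounded by $2M$ in norm. Repeating the expansion of Step~2 with the observed payoff in place of its conditional mean now introduces an additional linear-in-$\xi_{t+1}$ term:
$$
(t+1)^2\, d_{t+1}^2 \;\leq\; t^2\, d_t^2 + 2t\,\bigl\langle \hat{m}_t - c_t,\, \xi_{t+1}\bigr\rangle + 4M^2.
$$
Taking conditional expectation shows that $N_\tau := \tau^2 d_\tau^2 - 4M^2 \tau$ is a supermartingale, so $\E[\tau^2 d_\tau^2] \leq 4M^2 \tau$ for all $\tau$. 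I would then apply a Doob-type maximal inequality to an appropriate nonnegative transform of $N_\tau$ (e.g.\ $(N_\tau)^+$, or a rescaling thereof) to upgrade this second-moment control into the uniform-in-$\tau$ deviation bound $\sup_{\tau\geq T} d_\tau \leq 2M\sqrt{2/(\delta T)}$ with probability at least $1-\delta$.

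\textbf{Main obstacle.} Steps~1--2 are essentially algebra once the projection property is in place. The real work is Step~3: passing from the in-expectation bound $\E[d_\tau^2]\leq 4M^2/\tau$ to a \emph{uniform}-in-$\tau\geq T$ high-probability bound, while retaining only a $1/\sqrt\delta$ (Chebyshev-type) dependence on the confidence level. The $1/\sqrt\delta$ scaling in the conclusion is the telltale sign that exponential (Hoeffding--Azuma) concentration is deliberately avoided; instead, the right nonnegative (super)martingale transform of $\tau^2 d_\tau^2$ must be identified so that a single application of Doob's $L^1$ maximal inequality produces the stated tail bound.
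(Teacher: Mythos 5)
Your Steps~1 and~2 are correct and coincide with the argument the paper itself relies on. Note that the paper states this theorem by citation (Blackwell; Mertens--Sorin--Zamir) and does not prove it; the closest proof it contains is that of the generalization, Theorem~\ref{th:BlaAlpha}, whose $\alpha=0$ case is exactly your recursion $(t+1)^2 d_{t+1}^2\leq t^2 d_t^2+4M^2$, derived from the projection inequality $\bigl\langle \hat{m}_t-c_t,\,m(\bx_{t+1},\by)-c_t\bigr\rangle\leq 0$ via the minimax theorem and Theorem~\ref{th:appr}. Your reduction to $\cC\cap\overline{\co}\{m(a,b)\}$ to ensure $\norm[c_t]_2\leq M$ is legitimate: the intersected set is nonempty, still satisfies the condition of Theorem~\ref{th:appr}, and the distance to $\cC$ is dominated by the distance to it.

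The gap is in Step~3, and you have located it yourself but not closed it. The recursion and the fact that $N_\tau=\tau^2 d_\tau^2-4M^2\tau$ is a supermartingale are correct, but your concrete candidate $(N_\tau)_+$ is \emph{not} a supermartingale in general (positive parts preserve the submartingale property, not the supermartingale one: take $N_t=0$ and $N_{t+1}=\pm 1$ equiprobably), so Doob's maximal inequality cannot be applied to it. The transform that works is obtained by dividing the recursion by $(\tau+1)^2$ before taking conditional expectations: since
\[
\E_\tau\bigl[d_{\tau+1}^2\bigr]\;\leq\; d_\tau^2+\frac{4M^2}{(\tau+1)^2}\,,
\]
the process $u_\tau=d_\tau^2+4M^2\sum_{s>\tau}s^{-2}$ is a nonnegative supermartingale with
$\E[u_T]\leq 4M^2/T+4M^2\sum_{s>T}s^{-2}\leq 8M^2/T$ (using $\E[T^2 d_T^2]\leq 4M^2 T$ from telescoping and a sum--integral comparison). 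Doob's maximal inequality for nonnegative supermartingales then yields
\[
\P\Bigl\{\sup_{\tau\geq T} d_\tau^2\geq\eps^2\Bigr\}\;\leq\;\P\Bigl\{\sup_{\tau\geq T} u_\tau\geq\eps^2\Bigr\}\;\leq\;\frac{8M^2}{\eps^2 T}\,,
\]
which equals $\delta$ precisely for $\eps=2M\sqrt{2/(\delta T)}$, as stated. With this substitution for your $(N_\tau)_+$ suggestion, the outline becomes a complete proof.
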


\paragraph{An alternative strategy in the case
where pure actions are taken and observed.}

Convergence rates of a slightly different flavor (but still implying approachability)
can be proved, in the full monitoring case, by modifying the above procedure as follows.
For $t \geq 1$, consider instead the vector-valued quantity
\[
\hat{m}_t = \frac{1}{t} \sum_{s=1}^t m(\bx_s,B_s)\,,
\]
compute its projection $c_t$ (in $\ell^2$--norm) on $\cC$,
and solve the associated minimax problem~\eqref{eq:minmax}.

This modified strategy enjoys the following rates of convergence
for approachability when pure actions are taken and observed.

\begin{theorem}[Section~7.7 and Exercise~7.23 of~\citemor{CeLuSt06}]
\label{th:CBL}
We denote by $M$ a bound in norm over $m$, i.e.,
\[
\max_{(a,b) \in \cA \times \cB} \bnorm[m(a,b)]_2 \leq M\,.
\]
With pure actions taken and observed, the above strategy ensures that
for all strategies of the second player, with probability at least $1-\delta$,
\[
\inf_{c \in \cC} \ \norm[c - \frac{1}{T} \sum_{t=1}^T m \bigl( A_t,B_t \bigr)]_2 \ \leq
\frac{2M}{\sqrt{T}} \Bigl( 1 + 2\sqrt{\ln (2/\delta)} \Bigr)\,.
\]
\end{theorem}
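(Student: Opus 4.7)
The plan is to decompose the approachability error into two parts: a deterministic Blackwell-type contribution controlled through the ``hybrid'' averages $\hat m_T = \frac{1}{T}\sum_{t=1}^T m(\bx_t,B_t)$ on which the strategy is built, and a martingale remainder that comes from replacing the realized pure actions $A_t$ by the mixed actions $\bx_t$ that generated them.

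For the first (deterministic) part, I would work pathwise, fixing an arbitrary realization of $B_1,B_2,\ldots$ played by Nature. The construction of $\bx_{t+1}$ through the minimax problem~\eqref{eq:minmax}, combined with Theorem~\ref{th:appr} and the minimax theorem exactly as in the proof of Theorem~\ref{th:Bla} for mixed-action monitoring, guarantees that
\[
\bigl\langle \hat m_t - c_t,\ m(\bx_{t+1},B_{t+1}) - c_t \bigr\rangle \ \leq\ 0
\]
for every outcome $B_{t+1}$, since $c_t \in \cC$ and the minimax value is nonpositive. Plugging this into the classical Blackwell recursion on $\bnorm[\hat m_T - c_T]_2^2$ (using the bound $M$ on $m$) yields, with probability one and for every strategy of Nature,
\[
\inf_{c \in \cC}\ \bnorm[\hat m_T - c]_2\ \leq\ \frac{2M}{\sqrt{T}}.
\]

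For the second (stochastic) part, introduce the $\R^d$-valued martingale-difference sequence $Z_t = m(A_t,B_t) - m(\bx_t,B_t)$. Conditionally on the past (which fixes $\bx_t$ and, under any strategy of the second player, the distribution of $B_t$), the draw $A_t \sim \bx_t$ is independent of $B_t$, so $\E[Z_t \mid \text{past}] = 0$ and $\bnorm[Z_t]_2 \leq 2M$. A Hilbert-space version of the Hoeffding--Azuma inequality (Pinelis' bound) then gives, with probability at least $1-\delta$,
\[
\Bnorm[\frac{1}{T}\sum_{t=1}^T Z_t]_2\ \leq\ \frac{4M\sqrt{\ln(2/\delta)}}{\sqrt{T}}.
\]
The triangle inequality combining this with the deterministic bound of the first part, together with $\frac{1}{T}\sum_t m(A_t,B_t) = \hat m_T + \frac{1}{T}\sum_t Z_t$, delivers exactly the announced rate.

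The main obstacle is the second step: one needs a concentration inequality for the $\ell^2$-norm of the vector-valued martingale whose constant does not depend on the ambient dimension $d$, since the statement contains no $\sqrt{d}$ or $\sqrt{\log d}$ factor. A coordinatewise application of scalar Azuma--Hoeffding together with a union bound would pollute the rate with such a factor; the clean way is to appeal to a Pinelis-type inequality for martingales valued in a Hilbert space, which provides a dimension-free sub-Gaussian tail matching the stated constant.
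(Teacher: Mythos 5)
Your proof is correct and is exactly the intended argument: the paper does not prove this theorem itself (it cites Section~7.7 and Exercise~7.23 of Cesa-Bianchi and Lugosi), but your decomposition into (i) the pathwise Blackwell recursion on $\hat m_t = \frac{1}{t}\sum_{s\le t} m(\bx_s,B_s)$, which gives $2M/\sqrt{T}$ surely because $\bigl\langle \hat m_t - c_t,\, m(\bx_{t+1},\by)-c_t\bigr\rangle\le 0$ for every $\by$, and (ii) a Hilbert-space Hoeffding--Azuma bound on the martingale $\frac{1}{T}\sum_t \bigl(m(A_t,B_t)-m(\bx_t,B_t)\bigr)$, is precisely the same pair of ingredients the paper reuses in Step~2 of the proof of Theorem~\ref{TheoGeneralCase}, where the identical constant $4R\sqrt{\ln(2/\delta)/T}$ is extracted from Lemma~3.2 of Chen and White. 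The constants assemble as $\frac{2M}{\sqrt{T}}+\frac{4M\sqrt{\ln(2/\delta)}}{\sqrt{T}}=\frac{2M}{\sqrt{T}}\bigl(1+2\sqrt{\ln(2/\delta)}\bigr)$, matching the statement, and your closing remark about needing a dimension-free (Pinelis-type) inequality rather than a coordinatewise union bound is the right observation.
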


In the next section, we will rather resort to this slightly modified procedure
as the form of the resulting bounds is closer to the one derived in
the main section (Section~\ref{sec:main}) of this paper.

\section{Robust approachability for finite set-valued games.}
\label{se:robapproach}

In this section we extend the results from the previous section to set-valued payoff functions
in the case of full monitoring.
We denote by $\cS$ the set of all subsets of $\R^d$ and consider a set-valued
payoff function $\om : \cA \times \cB \to \cS$.

\paragraph{Pure actions taken and observed.} At each round $t$, the players
choose simultaneously respective actions $A_t \in \cA$ and $B_t \in \cB$,
possibly at random according to mixed distributions $\bx_t$ and $\by_t$.
Full monitoring takes place for the first player: he observes $B_t$
at the end of round $t$.
However, as a result, the first player gets the {\em subset} $\om(A_t,B_t)$ as a payoff. This models the
ambiguity or uncertainty associated with some true underlying payoff gained.

We extend $\om$ multi-linearly to $\Delta(\cA) \times \Delta(\cB)$ and even
to $\Delta(\cA \times \cB)$, the set of joint probability distributions
on $\cA \times \cB$, as follows. Let
\[
\mu = \bigl( \mu_{a,b} \bigr)_{(a,b) \in \cA \times \cB}
\]
be such a joint probability distribution; then $\om(\mu)$ is defined
as a finite convex combination\footnote{For
two sets $S,\,\,T$ and $\alpha \in [0,1]$, the convex combination $\alpha S + (1-\alpha)T$
is defined as
\begin{center} $
\bigl\{\alpha s + (1-\alpha)t, \ \ s \in S \ \mbox{and} \ t \in T \bigr\}\,.
$ \end{center}
}
of subsets of $\R^d$,
\[
\om(\mu) = \sum_{a \in \cA} \sum_{b \in \cB} \mu_{a,b} \, \om(a,b)\,.
\]
When $\mu$ is the product-distribution of some $\bx \in \Delta(\cA)$ and
$\by \in \Delta(\cB)$, we use the notation $\om(\mu) = \om(\bx,\by)$.

We denote by
\[
\pi_T = \frac{1}{T} \sum_{t=1}^T \delta_{(A_t,B_t)}
\]
the empirical distribution of the pairs $(A_t,B_t)$ of actions taken
during the first $T$ rounds, and will be interested in the behavior of
\[
\frac{1}{T} \sum_{t=1}^T \om(A_t,B_t)\,,
\]
which can also be rewritten here in a compact way as $\om(\pi_T)$, by linearity of the extension of $\om$.

The distance of this set $\om(\pi_T)$ to the target set $\cC$ will be measured in a worst-case sense:
we denote by
\[
\epsilon_T = \sup_{d \in \om(\pi_T)} \, \inf_{c \in \cC} \ \norm[c - d]_2
\]
the smallest value such that $\om(\pi_T)$ is included in an $\epsilon_T$--neighborhood of $\cC$.
Robust approachability of a set $\cC$ with the set-valued payoff function $\om$
then simply means that the sequence of $\epsilon_T$ tends almost-surely to $0$, uniformly
with respect to the strategies of the second player.

\begin{definition}
A set $\cC \subseteq \R^d$ is $\om$--\emph{robust approachable with pure actions} if there exists a strategy
of the first player such that, for all $\eps > 0$,
there exists an integer $T_\eps$ such that for all strategies of the second player,
\[
\P \left\{ \forall \, T \geq T_\eps, \quad \sup_{d \in \om(\pi_T)} \, \inf_{c \in \cC} \ \norm[c - d]_2
\leq \eps \right\}
\ \geq 1-\eps\,.
\]
\end{definition}

\paragraph{Mixed actions taken and observed.} At each round $t$, the players
choose simultaneously respective mixed actions $\bx_t \in \Delta(\cA)$ and $\by_t \in \Delta(\cB)$.
Full monitoring still takes place for the first player: he observes $\by_t$
at the end of round $t$; he however
gets the subset $\om(\bx_t,\by_t)$ as a payoff (which, again, accounts for the uncertainty).

The product-distribution of two elements $\bx = (x_a)_{a \in \cA} \in \Delta(\cA)$ and $\by = (y_b)_{b \in \cB} \in \Delta(\cB)$
will be denoted by $\bx \otimes \by$; it gives a probability mass of $x_a y_b$ to each pair $(a,b) \in \cA \times \cB$.
We consider the empirical joint distribution of mixed actions taken
during the first $T$ rounds,
\[
\nu_T = \frac{1}{T} \sum_{t=1}^T \bx_t \otimes \by_t\,,
\]
and will be interested in the behavior of
\[
\frac{1}{T} \sum_{t=1}^T \om(\bx_t,\by_t)\,,
\]
which can also be rewritten here in a compact way as $\om(\nu_T)$, by linearity of the extension of $\om$.

\begin{definition}
A set $\cC \subseteq \R^d$ is $\om$--\emph{robust approachable with mixed actions} if there exists a strategy
of the first player such that, for all $\eps > 0$,
there exists an integer $T_\eps$ such that for all strategies of the second player,
\[
\P \left\{ \forall \, T \geq T_\eps, \quad \sup_{d \in \om(\nu_T)} \, \inf_{c \in \cC} \ \norm[c - d]_2
\leq \eps \right\}
\ \geq 1-\eps\,.
\]
\end{definition}

Actually, the bounds exhibited below in this setting will be of the form
\[
\sup_{d \in \om(\nu_T)} \, \inf_{c \in \cC} \ \norm[c - d]_2 \leq \beta(T)
\]
with probability $1$ and uniformly over all (deterministic or randomized)
strategies of the second player, where $\beta(T) \to 0$ and for deterministic strategies
of the first player.

\paragraph{A useful continuity lemma.} Before proceeding we provide a continuity
lemma. It can be reformulated as indicating that for all joint distributions
$\mu$ and $\nu$ over $\cA \times \cB$, the set $\om(\mu)$ is contained
in a $M \norm[\mu - \nu]_1$--neighborhood of $\om(\nu)$,
where $M$ is a bound in $\ell^2$--norm on $\om$; this is
a fact that we will use repeatedly below.

\begin{lemma}
\label{lm:DVT}
Let $\mu$ and $\nu$ be two probability distributions over $\cA \times \cB$.
We assume that the set-valued function $\om$ is bounded in norm by $M$,
i.e., that there exists a real number $M > 0$ such that
\[
\forall (a,b) \in \cA \times \cB, \qquad \quad \sup_{d \in \om(a,b)} \norm[d]_2 \leq M\,.
\]
Then
\[
\sup_{d \in \om(\mu)} \,\, \inf_{c \in \om(\nu)} \,\, \norm[d - c]_2 \,\,
\leq M \norm[\mu - \nu]_1 \leq M\sqrt{N_{\cA} N_{\cB}} \norm[\mu - \nu]_2\,,
\]
where the norms in the right-hand side are respectively the $\ell^1$ and $\ell^2$--norms between probability
distributions.
\end{lemma}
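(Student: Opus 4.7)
The plan is to exploit the explicit description of $\om(\mu)$ as a Minkowski-style convex combination. By the footnote definition, every element $d \in \om(\mu) = \sum_{a,b} \mu_{a,b}\,\om(a,b)$ can be written as
\[
d = \sum_{a \in \cA} \sum_{b \in \cB} \mu_{a,b}\, s_{a,b}, \qquad s_{a,b} \in \om(a,b),
\]
where the selections $s_{a,b}$ may be chosen arbitrarily for those pairs with $\mu_{a,b} = 0$. This last freedom is the key observation: once we have made a choice of the $s_{a,b}$ for \emph{all} $(a,b) \in \cA \times \cB$, the natural candidate $c \in \om(\nu)$ that matches $d$ as closely as possible is obtained by using the \emph{same} selections but reweighting them by $\nu$:
\[
c = \sum_{a \in \cA} \sum_{b \in \cB} \nu_{a,b}\, s_{a,b}.
\]
By construction $c \in \om(\nu)$.

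Then the difference telescopes to $d - c = \sum_{a,b} (\mu_{a,b}-\nu_{a,b})\,s_{a,b}$. Applying the triangle inequality in $\ell^2$ together with the uniform bound $\|s_{a,b}\|_2 \leq M$ yields
\[
\|d - c\|_2 \leq \sum_{a,b} |\mu_{a,b}-\nu_{a,b}|\,\|s_{a,b}\|_2 \leq M \,\|\mu-\nu\|_1.
\]
Since this bound is uniform in the choice of $d \in \om(\mu)$ (the selections $s_{a,b}$ depend on $d$, but the final constant $M\|\mu-\nu\|_1$ does not), passing to the supremum over $d$ and the infimum over $c$ delivers the first inequality of the lemma.

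For the second inequality, I would simply invoke Cauchy--Schwarz on the $N_\cA N_\cB$ coordinates of the difference $\mu - \nu$, giving $\|\mu-\nu\|_1 \leq \sqrt{N_\cA N_\cB}\,\|\mu-\nu\|_2$. There is no real obstacle here; the only subtlety worth flagging is the need to extend the selection $s_{a,b}$ beyond the support of $\mu$ so that $c = \sum \nu_{a,b} s_{a,b}$ is a bona fide element of $\om(\nu)$ (rather than only of $\sum_{(a,b)\in\mathrm{supp}(\mu)} \nu_{a,b}\om(a,b)$), which is precisely why the Minkowski-style definition of $\om(\mu)$ in the footnote matters.
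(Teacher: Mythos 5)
Your proof is correct and follows essentially the same route as the paper's: write $d = \sum_{a,b} \mu_{a,b}\,\theta_{a,b}$ with $\theta_{a,b} \in \om(a,b)$, reweight the same selections by $\nu$ to produce $c \in \om(\nu)$, and conclude by the triangle inequality and Cauchy--Schwarz. Your remark about extending the selections beyond the support of $\mu$ is a small point the paper leaves implicit, but it does not change the argument.
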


\begin{proof}
Let $d$ be an element of $\om(\mu)$; it can be written as
\[
d = \sum_{a \in \cA} \sum_{b \in \cB} \mu_{a,b} \, \theta_{a,b}
\]
for some elements $\theta_{a,b} \in \om(a,b)$. We consider
\[
c = \sum_{a \in \cA} \sum_{b \in \cB} \nu_{a,b} \, \theta_{a,b}\,,
\]
which is an element of $\om(\nu)$. Then by the triangle inequality,
\[
\norm[d - c]_2 = \norm[ \sum_{a \in \cA} \sum_{b \in \cB} \, \bigl( \mu_{a,b} - \nu_{a,b} \bigr) \theta_{a,b} ]_2
\leq \sum_{a \in \cA} \sum_{b \in \cB} \, \bigl| \mu_{a,b} - \nu_{a,b} \bigr| \norm[ \theta_{a,b} ]_2
\leq M \,\, \sum_{a \in \cA} \sum_{b \in \cB} \, \bigl| \mu_{a,b} - \nu_{a,b} \bigr|\,.
\]
This entails the first claimed inequality. The second one follows from an application of the
Cauchy-Schwarz inequality.
\end{proof}

\begin{corollary}
\label{cor:contD}
When the set-valued function $\om$ is bounded in norm, for all $\by \in \Delta(\cB)$,
the mapping $D_{\by} : \Delta(A) \to \mathbb{R}$ defined by
\[
\forall\, \bx \in \Delta(\cA), \qquad D_{\by}(\bx) = \sup_{d \in \om(\bx,\by)} \inf_{c \in \cC} \norm[c-d]_2
\]
is continuous.
\end{corollary}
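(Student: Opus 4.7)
The plan is to derive the continuity of $D_{\by}$ from Lemma~\ref{lm:DVT} by showing in fact that $D_{\by}$ is Lipschitz in $\bx$, with Lipschitz constant controlled by the uniform bound $M$ on $\om$. Since every topological statement about continuity on the simplex will follow from such a Lipschitz estimate, this single estimate is the entire content of the proof.

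More precisely, I would fix $\by \in \Delta(\cB)$ and consider two arbitrary elements $\bx, \bx' \in \Delta(\cA)$. I would apply Lemma~\ref{lm:DVT} to the two product distributions $\mu = \bx \otimes \by$ and $\nu = \bx' \otimes \by$. A direct computation gives
\[
\norm[\mu - \nu]_1 \,=\, \sum_{a \in \cA} \sum_{b \in \cB} y_b \, \bigl| x_a - x'_a \bigr| \,=\, \norm[\bx - \bx']_1\,,
\]
so the lemma yields $\sup_{d \in \om(\bx,\by)} \inf_{c' \in \om(\bx',\by)} \norm[d - c']_2 \leq M \norm[\bx - \bx']_1$.

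Now, for any $d \in \om(\bx,\by)$, pick $c'_d \in \om(\bx',\by)$ almost realizing this infimum, up to an arbitrary $\eta > 0$. By the triangle inequality,
\[
\inf_{c \in \cC} \norm[c - d]_2 \,\leq\, \inf_{c \in \cC} \norm[c - c'_d]_2 + \norm[c'_d - d]_2 \,\leq\, D_{\by}(\bx') + M \norm[\bx - \bx']_1 + \eta\,.
\]
Letting $\eta \to 0$ and taking the supremum over $d \in \om(\bx,\by)$ gives $D_{\by}(\bx) \leq D_{\by}(\bx') + M \norm[\bx - \bx']_1$. Exchanging the roles of $\bx$ and $\bx'$ yields $\bigl| D_{\by}(\bx) - D_{\by}(\bx') \bigr| \leq M \norm[\bx - \bx']_1$, which establishes Lipschitz continuity and hence continuity.

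There is no real obstacle here; the only minor care needed is to produce the near-optimal $c'_d$ (the infimum in Lemma~\ref{lm:DVT} is attained, since $\om(\bx',\by)$ is a finite convex combination of bounded sets and the construction in the lemma's proof is explicit: given $d = \sum \mu_{a,b} \theta_{a,b}$, take $c' = \sum \nu_{a,b} \theta_{a,b}$), so the use of an auxiliary $\eta$ can even be avoided. The role of the assumption that $\om$ is bounded in norm is exactly to make Lemma~\ref{lm:DVT} applicable and provide the Lipschitz constant $M$.
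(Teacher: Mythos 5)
Your proposal is correct and follows essentially the same route as the paper: both apply Lemma~\ref{lm:DVT} with $\mu = \bx \otimes \by$ and $\nu = \bx' \otimes \by$, use the triangle inequality to transfer the distance-to-$\cC$ bound from a point of $\om(\bx,\by)$ to a nearby point of $\om(\bx',\by)$, and conclude $D_{\by}(\bx) \leq D_{\by}(\bx') + M\norm[\bx-\bx']_1$ (the paper phrases this with an $\eps$--$\delta$ bookkeeping rather than the explicit Lipschitz constant, but the argument is the same). Your added observations — that $\norm[\mu-\nu]_1 = \norm[\bx-\bx']_1$ exactly and that the approximating point can be taken explicitly as $\sum \nu_{a,b}\theta_{a,b}$ — are both correct refinements.
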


\begin{proof}
We show that for all $\bx,\,\bx' \in \Delta(\cA)$,
the condition $\norm[\bx'-\bx]_1 \leq \varepsilon$ implies that $D_{\by}(\bx) - D_{\by}(\bx') \leq M\e$,
where $M$ is the bound in norm over $\om$.
Indeed, fix $\delta > 0$ and let $d_{\delta,\bx} \in \om(\bx,\by)$ be such that
\begin{equation}
\label{eq:Ddelta}
D_{\by}(\bx)
\leq \inf_{c \in \cC}
\bnorm[ {c-d_{\delta,\bx}} ]_2
+ \delta\,.
\end{equation}
By Lemma \ref{lm:DVT} (with the choices $\mu = \bx \otimes \by$ and $\nu = \bx' \otimes \by$)
there exists $d_{\delta,\bx'} \in \om(\bx',\by)$ such that
$\bnorm[d_{\delta,\bx} - d_{\delta,\bx'}]_2 \leq M\e + \delta$. The triangle inequality entails that
\[
\inf_{c \in \cC} \bnorm[c-d_{\delta,\bx}]_2 \leq \inf_{c \in \cC} \bnorm[c-d_{\delta,\bx'}]_2
+ M\e + \delta\,.
\]
Substituting in~(\ref{eq:Ddelta}), we get that
\[
D_{\by}(\bx) \leq M\e + 2\delta + \inf_{c \in \cC} \bnorm[c-d_{\delta,\bx'}]_2 \leq
M\e + 2\delta + D_{\by}(\bx')\,,
\]
which, letting $\delta \to 0$, proves our continuity claim.
\end{proof}

\paragraph{Necessary and sufficient condition for robust approachability.}
This conditions reads as follows and will be referred to as~\eqref{eq:RAC},
an acronym that stands for ``robust approachability condition.''

\begin{theorem}
\label{th:rob-appr}
Suppose that the set-valued function $\om$ is bounded in norm by $M$.
A closed convex set $\cC \subseteq \R^d$ is $\om$--approachable (with pure or mixed actions) if and only if
the following robust approachability condition is satisfied,
\begin{equation}
\label{eq:RAC}
\tag{\RAC}
\forall \, \by \in \Delta(\cB), \ \ \exists \, \bx \in \Delta(\cA), \qquad \quad
\om(\bx,\by) \subseteq \cC\,.
\end{equation}
\end{theorem}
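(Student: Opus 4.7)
The plan is to establish each direction of the equivalence separately, following the template of Theorem~\ref{th:appr} but adapted to the set-valued setting. Since~\eqref{eq:RAC} only involves the mixed actions of both players, the characterization is the same whether pure or mixed actions are observed; I would prove the two cases in parallel, the pure-action case reducing to the mixed-action one by a martingale concentration argument on the joint empirical distribution $\pi_T$. For \emph{necessity}, fix an arbitrary $\by \in \Delta(\cB)$ and have the second player play $\by$ stationarily (or draw $B_t$ i.i.d.\ from $\by$ in the pure-action setting). Setting $\ol{\bx}_T = (1/T) \sum_{t=1}^T \bx_t$, multilinearity of $\om$ gives $\om(\nu_T) = \om(\ol{\bx}_T, \by)$ in the mixed setting, and Azuma--Hoeffding on the indicators of $\cA \times \cB$ combined with Lemma~\ref{lm:DVT} shows that $\om(\pi_T)$ is within $O(1/\sqrt{T})$ in Hausdorff distance of $\om(\ol{\bx}_T, \by)$ with high probability. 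Robust approachability then forces $D_{\by}(\ol{\bx}_T) \to 0$. By compactness of $\Delta(\cA)$, a subsequence $\ol{\bx}_{T_k}$ converges to some $\bx^\star$, and the continuity of $D_{\by}$ from Corollary~\ref{cor:contD} yields $D_{\by}(\bx^\star) = 0$, i.e.\ $\om(\bx^\star, \by) \subseteq \cC$ since $\cC$ is closed.

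For \emph{sufficiency}, I would adapt Blackwell's geometric strategy. At round $t+1$, having formed the Minkowski-averaged set $\hat{\om}_t = (1/t) \sum_{s=1}^t \om(\bx_s,\by_s)$, pick $\hat{d}_t \in \hat{\om}_t$ realizing the worst-case distance to $\cC$, let $c_t$ be its Euclidean projection on $\cC$, and play $\bx_{t+1}$ solving
\[
\min_{\bx \in \Delta(\cA)} \, \max_{\by \in \Delta(\cB)} \, \sup_{d \in \om(\bx,\by)} \bigl\langle \hat{d}_t - c_t, \ d - c_t \bigr\rangle.
\]
The inner supremum rewrites as $\sum_{a,b} x_a y_b \, h_{\om(a,b)}(\hat{d}_t - c_t) - \langle \hat{d}_t - c_t, c_t \rangle$, where $h_{\om(a,b)}(u) = \sup_{v \in \om(a,b)} \langle u, v \rangle$ is the support function; this is bilinear in $(\bx,\by)$ so the minimax problem has a well-defined value by the classical minimax theorem. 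Condition~\eqref{eq:RAC} forces this value to be nonpositive: for any $\by$, picking $\bx$ with $\om(\bx,\by) \subseteq \cC$ makes every $d \in \om(\bx,\by)$ lie in $\cC$, so $\langle \hat{d}_t - c_t, d - c_t \rangle \leq 0$ by the characterization of the Euclidean projection onto a closed convex set. A Blackwell-type recursion on $\sup_{d \in \hat\om_t} \inf_{c \in \cC} \norm[d-c]_2$, rewritten through the support-function duality $\sup_{d \in S} \inf_{c \in \cC} \norm[d-c]_2 = \sup_{\norm[u]_2 = 1} [h_S(u) - h_\cC(u)]_+$ valid for closed convex $\cC$, then yields the $O(1/\sqrt{T})$ rate, transferred to the pure-action setting via the same concentration tools used for Theorem~\ref{th:Bla}.

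The main technical obstacle is running the Blackwell quadratic recursion not on the distance of a single averaged point, as in Section~\ref{se:basics}, but on the worst-case distance from the averaged \emph{set} $\hat\om_t$ to $\cC$. The key enabling step is the support-function identity above, which together with the Minkowski additivity $\hat\om_{t+1} = \bigl(1 - 1/(t+1)\bigr) \hat\om_t + \bigl(1/(t+1)\bigr) \om(\bx_{t+1},\by_{t+1})$ makes support functions add linearly and reduces the analysis to a scalar Blackwell-type recursion in the single worst direction $u_t = (\hat{d}_t - c_t)/\norm[\hat{d}_t - c_t]_2$. A subsidiary check is that the inner supremum in the strategy-defining minimax is continuous in $\bx$; this follows from Corollary~\ref{cor:contD}, or directly from the explicit bilinear expression via support functions.
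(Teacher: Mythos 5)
Your necessity argument is sound and is essentially the paper's: the paper runs the contrapositive (if \eqref{eq:RAC} fails at some $\by_0$, the continuous map $D_{\by_0}$ of Corollary~\ref{cor:contD} is bounded below by some $D_{\min}>0$ on the compact set $\Delta(\cA)$, and a stationary opponent playing $\by_0$ keeps the worst-case distance above $D_{\min}$), whereas you argue directly through a convergent subsequence of $\ol{\bx}_T$; the two are interchangeable, and your reduction of the pure-action case by Hoeffding--Azuma plus Lemma~\ref{lm:DVT} is also the paper's.

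The sufficiency argument, however, has a genuine gap. Your strategy enforces the Blackwell separation condition only in the single direction $u_t = (\hat{d}_t - c_t)\big/\norm[\hat{d}_t - c_t]_2$ attached to the current worst point $\hat{d}_t$ of $\hat{\om}_t$, but the quantity to be controlled, $\phi_t = \sup_{d\in\hat{\om}_t}\inf_{c\in\cC}\norm[d-c]_2$, is a supremum over the whole set: the point of $\hat{\om}_{t+1} = (1-\tfrac{1}{t+1})\,\hat{\om}_t + \tfrac{1}{t+1}\,\om(\bx_{t+1},\by_{t+1})$ achieving $\phi_{t+1}$ has the form $(1-\tfrac{1}{t+1})e + \tfrac{1}{t+1}d$ for some $e\in\hat{\om}_t$ that need not be $\hat{d}_t$. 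Expanding $\bnorm[(1-\tfrac{1}{t+1})(e - c(e)) + \tfrac{1}{t+1}(d - c(e))]_2^2$, where $c(e)$ is the projection of $e$ onto $\cC$, produces a cross term $\langle e - c(e),\, d - c(e)\rangle$ in the direction $e-c(e)$, about which your minimax choice of $\bx_{t+1}$ says nothing. Equivalently, in your support-function formulation the relation $g_{t+1}(u) = (1-\tfrac{1}{t+1})\,g_t(u) + \tfrac{1}{t+1}\bigl(h_{\om(\bx_{t+1},\by_{t+1})}(u) - h_\cC(u)\bigr)$ holds for every $u$, but the increment is known to be nonpositive only at $u=u_t$, while $\phi_{t+1}=\sup_u\,[g_{t+1}(u)]_+$ may be attained at a different direction; there the only available bound is $2M$, and the recursion degenerates to $\phi_{t+1}\leq(1-\tfrac{1}{t+1})\phi_t + \tfrac{2M}{t+1}$, which does not tend to zero. (Concretely, with $\cC$ a single point, nothing prevents the chosen $\om(\bx_{t+1},\by_{t+1})$ from sitting at distance $M$ from $\cC$ in a direction orthogonal to $u_t$, so that $\phi_t$ drifts toward $M$.) The paper circumvents exactly this difficulty by lifting: it applies the ordinary point-valued Blackwell strategy to the payoff $m(a,b)=\delta_{(a,b)}$ and the closed convex target $\widetilde{\cC} = \bigl\{\mu\in\Delta(\cA\times\cB):\ \om(\mu)\subseteq\cC\bigr\}$, so that the tracked average is the single point $\pi_T$ (resp.\ $\nu_T$); Lemma~\ref{lm:eqvRAC} identifies \eqref{eq:RAC} with Blackwell's condition for $(m,\widetilde{\cC})$, and Lemma~\ref{lm:apprtorobappr} (via Lemma~\ref{lm:DVT}) converts the distance of $\pi_T$ to $\widetilde{\cC}$ into the worst-case distance of the set $\om(\pi_T)$ to $\cC$, at the cost of a factor $M\sqrt{N_\cA N_\cB}$. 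To salvage a direct set-valued argument you would have to enforce the separation condition simultaneously against all directions weighted by the positive part of $g_t$ (an approachability problem in a function space), not merely against the maximizing one.
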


\begin{proofof}{the necessity of Condition~\eqref{eq:RAC}}
If the condition does not hold, then there exists $\by_0 \in \Delta(\cB)$
such that for every $\bx \in \cA$, the set $\om(\bx,\by_0)$ is
not included in $\cC$, i.e., it contains at least one point not in $\cC$.
We consider the mapping $D_{\by_0}$ defined in the statement of Corollary~\ref{cor:contD}.
Since $\cC$ is closed, distances of given individual points to $\cC$ are achieved; therefore,
by the choice of $\by_0$, we get that $D_{\by_0}(\bx) > 0$ for all $\bx \in \Delta(\cA)$.
Now, since $D_{\by_0}$ is continuous on the compact set $\Delta(\cA)$, as
asserted by the indicated corollary, it attains its minimum, whose value we denote by $D_{\min} > 0$.

Assume now that the second player chooses at each round $\by_t = \by_0$ as his mixed action.
In the case of mixed actions taken and observed, denoting
\[
\ol{\bx}_T = \frac{1}{T} \sum_{t=1}^T \bx_t\,,
\]
we get that $\nu_t = \ol{\bx}_T \otimes \by_0$, and hence, for all
strategies of the first player and for all $T \geq 1$,
\[
\sup_{d \in \om(\nu_T)} \inf_{c \in \cC} \norm[c-d]_2 = D_{\by_0}(\ol{\bx}_T) \geq D_{\min} > 0\,,
\]
which shows that $\cC$ is not approachable.

The case of pure actions taken and observed is treated similarly,
with the sole addition of a concentration argument. By martingale convergence (e.g.,
repeated uses of the Hoeffding-Azuma inequality together with an application
of the Borel-Cantelli lemma),
$\delta_T = \norm[\pi_T - \nu_T]_1 \to 0$
almost surely as $T \to \infty$. By applying Lemma~\ref{lm:DVT}, we get
\[
\sup_{d \in \om(\pi_T)} \inf_{c \in \cC} \norm[c-d]_2 \geq
\sup_{d \in \om(\nu_T)} \inf_{c \in \cC} \norm[c-d]_2 - M\delta_T
\geq D_{\min} - M\delta_T\
\]
and simply take the $\liminf$ in the above inequalities to conclude the argument.
\end{proofof}

That~\eqref{eq:RAC} is sufficient to get robust approachability is proved in a constructive
way, by exhibiting suitable strategies. We identify probability distributions over $\cA \times \cB$ with
vectors in $\R^{\cA \times \cB}$ and consider the vector-valued payoff function
\[
m : (a,b) \in \cA \times \cB \longmapsto \delta_{(a,b)} \in \R^{\cA \times \cB}\,,
\]
which we extend multi-linearly to $\Delta(\cA) \times \Delta(\cB)$; the target
set will be
\begin{equation}
\label{eq:defctilde}
\widetilde{\cC} = \bigl\{ \mu \in \Delta ( \cA \times \cB ) : \ \ \om(\mu) \subseteq \cC \bigr\}\,.
\end{equation}
Since $\om$ is a linear function on $\Delta ( \cA \times \cB )$ and $\cC$ is convex,
the set $\widetilde{\cC}$ is convex as well. In addition, since $\cC$ is closed,
$\widetilde{\cC}$ is also closed.

\begin{lemma}
\label{lm:eqvRAC}
Condition~\eqref{eq:RAC} is equivalent to the $m$--approachability of $\widetilde{\cC}$.
\end{lemma}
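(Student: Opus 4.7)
The plan is to reduce this to Blackwell's classical characterization (Theorem~\ref{th:appr}) applied to the auxiliary vector-valued payoff function $m$ and the target set $\widetilde{\cC}$. Since it is already observed in the excerpt that $\widetilde{\cC}$ is closed and convex (because $\om$ is linear on $\Delta(\cA\times\cB)$ and $\cC$ is closed and convex), Theorem~\ref{th:appr} directly applies: the set $\widetilde{\cC}$ is $m$--approachable if and only if
\[
\forall\,\by\in\Delta(\cB),\ \ \exists\,\bx\in\Delta(\cA),\qquad m(\bx,\by)\in\widetilde{\cC}.
\]
So the entire content of the lemma is to identify this Blackwell-type condition with~\eqref{eq:RAC}.

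The identification is definitional. First, the multi-linear extension of $m(a,b)=\delta_{(a,b)}$ to $\Delta(\cA)\times\Delta(\cB)$ gives
\[
m(\bx,\by)=\sum_{a,b} x_a y_b\,\delta_{(a,b)}=\bx\otimes\by\in\Delta(\cA\times\cB).
\]
Second, by the very definition~\eqref{eq:defctilde} of $\widetilde{\cC}$, the membership $\bx\otimes\by\in\widetilde{\cC}$ is equivalent to $\om(\bx\otimes\by)\subseteq\cC$. Third, the multi-linear extension of $\om$ to joint distributions, restricted to the product distribution $\bx\otimes\by$, coincides with the bilinear extension to $\Delta(\cA)\times\Delta(\cB)$, that is, $\om(\bx\otimes\by)=\om(\bx,\by)$. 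Chaining these three observations gives
\[
m(\bx,\by)\in\widetilde{\cC}\ \iff\ \om(\bx,\by)\subseteq\cC,
\]
so the Blackwell condition for $(m,\widetilde{\cC})$ is literally~\eqref{eq:RAC}. The equivalence claimed by the lemma follows.

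There is no real obstacle in this argument; the only point that requires care is noting that Theorem~\ref{th:appr} is applicable, which is why one first checks that $\widetilde{\cC}$ is closed and convex, as recorded just before the statement. The lemma is then just the conjunction of Theorem~\ref{th:appr} and an unfolding of definitions.
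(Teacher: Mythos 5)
Your proposal is correct and follows exactly the paper's own argument: apply Theorem~\ref{th:appr} to the auxiliary payoff $m$ and the closed convex set $\widetilde{\cC}$, then observe that $m(\bx,\by)=\bx\otimes\by$ and that membership in $\widetilde{\cC}$ unfolds, by definition, to $\om(\bx,\by)\subseteq\cC$. Nothing is missing.
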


\begin{proof}
This equivalence is immediate via Theorem~\ref{th:appr}.
The latter indeed states that the $m$--approachability of $\widetilde{\cC}$
is equivalent to the fact that for all $\by \in \Delta(\cB)$, there exists some $\bx \in \Delta(\cA)$
such that $\mu = m(\bx,\by)$, the product-distribution between $\bx$ and $\by$, belongs to
$\widetilde{\cC}$, i.e., satisfies $\om(\mu) = \om(\bx,\by) \subseteq \cC$.
\end{proof}

The above definition of $m$ entails the following rewriting,
\[
\pi_T = \frac{1}{T} \sum_{t=1}^T m(A_t,B_t) \qquad \mbox{and} \qquad
\nu_T = \frac{1}{T} \sum_{t=1}^T m(\bx_t,\by_t)\,.
\]
Let $P_{\widetilde{\cC}}$ denote the projection operator onto $\widetilde{\cC}$;
the quantities at hand in the definition of $m$--approachability of $\widetilde{\cC}$ are given by
\[
\e_T = \Bnorm[ \pi_T - P_{\widetilde{\cC}}(\pi_T) ]_2 = \inf_{\mu \in \widetilde{\cC}} \norm[\pi_T - \mu]_2
\qquad \mbox{and} \qquad
\e'_T = \Bnorm[ \nu_T - P_{\widetilde{\cC}}(\nu_T) ]_2 = \inf_{\mu \in \widetilde{\cC}} \norm[\nu_T - \mu]_2\,.
\]
We now relate the quantities of interest, i.e., the ones arising in the definition of $\om$--robust
approachability of $\cC$, to the former quantities.

\begin{lemma}
\label{lm:apprtorobappr}
With pure actions taken and observed,
\[
\sup_{d \in \om(\pi_T)} \, \inf_{c \in \cC} \ \norm[c - d]_2 \leq M \sqrt{N_{\cA} N_{\cB}} \,\, \e_T\,.
\]
With mixed actions taken and observed,
\[
\sup_{d \in \om(\nu_T)} \, \inf_{c \in \cC} \ \norm[c - d]_2 \leq M \sqrt{N_{\cA} N_{\cB}} \,\, \e'_T\,.
\]
\end{lemma}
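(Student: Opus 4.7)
The plan is to use the projection of the empirical joint distribution onto $\widetilde{\cC}$ as a bridge between the two quantities, and then appeal to the continuity property of $\om$ already established in Lemma~\ref{lm:DVT}.

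More precisely, for the pure-action case I would set $\mu_T = P_{\widetilde{\cC}}(\pi_T)$, so that $\norm[\pi_T - \mu_T]_2 = \e_T$ and, by the very definition~\eqref{eq:defctilde} of $\widetilde{\cC}$, the inclusion $\om(\mu_T) \subseteq \cC$ holds. Then for any $d \in \om(\pi_T)$, Lemma~\ref{lm:DVT} applied to the pair of distributions $\pi_T$ and $\mu_T$ over $\cA \times \cB$ provides some $c \in \om(\mu_T)$ with
\[
\norm[d - c]_2 \leq M \sqrt{N_{\cA} N_{\cB}} \, \norm[\pi_T - \mu_T]_2 = M \sqrt{N_{\cA} N_{\cB}} \, \e_T\,.
\]
Since $c \in \om(\mu_T) \subseteq \cC$, the infimum $\inf_{c' \in \cC} \norm[d-c']_2$ is bounded by the same quantity; taking the supremum over $d \in \om(\pi_T)$ yields the first inequality claimed in the lemma.

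The mixed-action case is handled identically, substituting $\nu_T$ and $\mu'_T = P_{\widetilde{\cC}}(\nu_T)$ for $\pi_T$ and $\mu_T$, and using that $\norm[\nu_T - \mu'_T]_2 = \e'_T$.

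There is no real obstacle in this argument: everything reduces to observing that Lemma~\ref{lm:DVT} transports $\ell^2$-closeness of joint distributions into (one-sided Hausdorff) closeness of the corresponding set-valued payoffs, and that projecting onto $\widetilde{\cC}$ produces a distribution whose $\om$-image already lies in $\cC$. The only mild point to keep in mind is that the bound in Lemma~\ref{lm:DVT} is one-sided (each element of $\om(\pi_T)$ has a close companion in $\om(\mu_T)$, not conversely), which is exactly what is needed since the quantity of interest is a sup--inf rather than a symmetric Hausdorff distance.
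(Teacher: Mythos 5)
Your argument is correct and is essentially identical to the paper's own proof: both use Lemma~\ref{lm:DVT} with the pair $\bigl(\pi_T, P_{\widetilde{\cC}}(\pi_T)\bigr)$ (resp.\ $\nu_T$ and its projection) together with the inclusion $\om\bigl(P_{\widetilde{\cC}}(\pi_T)\bigr) \subseteq \cC$ coming from the definition of $\widetilde{\cC}$. Your closing remark about the one-sidedness of the bound being exactly what the sup--inf quantity requires is also the right observation.
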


\begin{proof}
Lemma~\ref{lm:DVT} entails that
the sets $\om(\pi_T)$ are included in $M\sqrt{N_{\cA} N_{\cB}}\,\epsilon_T$--neighborhoods of
$\om \bigl( P_{\widetilde{\cC}}(\pi_T) \bigr)$. Since by definition of $\widetilde{\cC}$,
one has $\om \bigl( P_{\widetilde{\cC}}(\pi_T) \bigr) \subseteq \cC$, we get in particular that
the sets $\om(\pi_T)$ are included in
$M\sqrt{N_{\cA} N_{\cB}}\,\epsilon_T$--{neigh\-bor\-hoods} of $\cC$, which is exactly what was stated.
The argument can be repeated with the $\nu_T$ to get the second bound in the statement of the lemma.
\end{proof}

\begin{proofof}{the sufficiency of Condition~\eqref{eq:RAC}}
First, Lemma~\ref{lm:eqvRAC} shows that
Condition~\eqref{eq:RAC} (via Theorems~\ref{th:Bla} or~\ref{th:CBL})
ensures the existence of strategies $m$--approaching $\widetilde{\cC}$.
Second, Lemma~\ref{lm:apprtorobappr} indicates that these strategies
also $\om$--robust approach $\cC$. (It even translates the rates for the
$m$--approachability of $\widetilde{\cC}$
into rates for the $\om$--robust approachability of $\cC$;
for instance, in the case of mixed actions taken and observed,
the $2/\sqrt{T}$ rate for the $m$--approachability of $\widetilde{\cC}$
becomes a $2M \sqrt{N_{\cA} N_{\cB} / T}$ rate for the $\om$--robust approachability of $\cC$,
a fact that we will use in the proof of Theorem~\ref{TheoGeneralCase}.)
\end{proofof}

\paragraph{Two concluding remarks.}
Note that, as explained around Equation (\ref{eq:minmax}),
the considered strategies for $m$--approaching $\widetilde{\cC}$,
or equivalently  $\om$--robust approaching $\cC$, are efficient as soon as projections
in $\ell^2$--norm onto the set $\widetilde{\cC}$ defined in~(\ref{eq:defctilde})
can be computed efficiently. The latter fact depends on the respective geometries
of $\om$ and $\cC$. We will provide examples of favorable cases (see, e.g., Section~\ref{sec:ER}
about minimization of external regret under partial monitoring).

A final remark is that the proposed strategies require full monitoring, as they
rely on the observations of either the pair of played mixed actions $m(\bx_t,\by_t)$
or of played pure actions $m(A_t,B_t)$. They enjoy no obvious extension to a case
where only a bandit monitoring of the played sets $\om(\bx_t,\by_t)$ or $\om(A_t,B_t)$
would be available.

\section{Robust approachability for concave--convex set-valued games.}
\label{sec:robappr-gal}

We consider in this section the same setting of mixed actions taken and observed
as in the previous section, that is, we deal with set-valued payoff functions
$\om : \Delta(\cA) \times \Delta(\cB) \to \cS$ under full monitoring.
However, in the previous section $\om$ was linear on
$\Delta(\cA) \times \Delta(\cB)$, an assumption that we now weaken
while still having that~\eqref{eq:RAC} is the necessary and sufficient condition
for robust approachability. The price to pay for this is
the loss of the possible efficiency of the approachability strategies exhibited
and the worsening of the convergence rates.

Formally, the functions $\om : \Delta(\cA) \times \Delta(\cB) \to \cS$ that we will consider
will satisfy one or several of the following properties.

\begin{definition}
\label{def:unifcont}
A function $\om : \Delta(\cA) \times \Delta(\cB) \to \cS$ is uniformly continuous in
its first argument if for all $\e > 0$, there exists $\eta > 0$ such that
for all $\bx,\bx' \in \Delta(\cA)$
satisfying $\norm[\bx-\bx']_1 \leq \eta$ and for all $\by \in \Delta(\cB)$, the
set $\om(\bx',\by)$ is included in an $\e$--neighborhood of $\om(\bx,\by)$
in the Euclidian norm. Put differently,
\[
\sup_{d \in \om(\bx',\by)} \,\, \inf_{c \in \om(\bx,\by)} \,\, \norm[d - c]_2 \, \leq \e
\qquad \mbox{or} \qquad \om(\bx',\by) \subseteq \om(\bx,\by) + \e \bB\,,
\]
where $\bB$ is the unit Euclidian ball in $\R^d$.
\end{definition}

Uniform continuity in the second argument is defined symmetrically.

\begin{definition}
\label{def:convconc}
A function $\om : \Delta(\cA) \times \Delta(\cB) \to \cS$ is
concave in its first argument if for all $\bx,\bx' \in \Delta(\cA)$,
all $\by \in \Delta(\cB)$, and all $\alpha \in [0,1]$,
\[
\om\bigl( \alpha \bx + (1-\alpha) \bx', \, \by \bigr)
\subseteq \alpha \, \om(\bx,\by) + (1-\alpha) \, \om(\bx',\by)\,.
\]
A function $\om : \Delta(\cA) \times \Delta(\cB) \to \cS$ is
convex in its second argument if for all $\bx \in \Delta(\cA)$,
all $\by,\by' \in \Delta(\cB)$, and all $\alpha \in [0,1]$,
\[
\alpha \, \om(\bx,\by) + (1-\alpha) \, \om(\bx,\by') \subseteq
\om\bigl( \bx, \, \alpha \by + (1-\alpha) \by' \bigr)\,.
\]
\end{definition}

An example of such a function $\om$ is discussed in Lemma~\ref{lm:cvxconc}.

The following theorem indicates that~\eqref{eq:RAC} is the necessary and sufficient condition for
the $\om$--robust approachability of a closed convex set $\cC$ with mixed actions
when the payoff function $\om$ satisfies all four properties stated above.
(Boundedness of $\om$ indeed follows from the continuity of $\om$ in each variable.)

\begin{theorem}
\label{th:robappr-gal}
If $\om$ is bounded, convex, and uniformly continuous in its second argument,
then~\eqref{eq:RAC} entails that a closed convex set $\cC$ is $\om$--robust approachable
with mixed actions. \\
\indent On the contrary, if $\om$ is concave and uniformly continuous in its first argument,
then a closed convex set $\cC$ can be $\om$--robust approachable with mixed actions
only if~\eqref{eq:RAC} is satisfied.
\end{theorem}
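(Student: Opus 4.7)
I would handle the two directions separately.

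\emph{Necessity.} My plan is to adapt the necessity proof of Theorem~\ref{th:rob-appr}. Suppose \eqref{eq:RAC} fails, so some $\by_0\in\Delta(\cB)$ has $\om(\bx,\by_0)\not\subseteq\cC$ for every $\bx\in\Delta(\cA)$. Define $D_{\by_0}(\bx)=\sup_{d\in\om(\bx,\by_0)}\inf_{c\in\cC}\norm[c-d]_2$; the uniform continuity of $\om$ in its first argument makes $D_{\by_0}$ continuous on the compact set $\Delta(\cA)$, since the proof of Corollary~\ref{cor:contD} transfers verbatim with the assumed modulus of continuity playing the role of the estimate supplied by Lemma~\ref{lm:DVT}. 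Hence $D_{\by_0}$ attains a strictly positive minimum $D_{\min}>0$. If Nature plays $\by_0$ at every round, iterated concavity in the first argument gives
\[
\om\bigl(\ol\bx_T,\by_0\bigr)\;\subseteq\;\frac{1}{T}\sum_{t=1}^T\om(\bx_t,\by_0),
\qquad \ol\bx_T=\frac{1}{T}\sum_{t=1}^T\bx_t,
\]
so the one-sided distance of the right-hand side to $\cC$ is bounded below by $D_{\by_0}(\ol\bx_T)\geq D_{\min}$, precluding $\om$--robust approachability.

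\emph{Sufficiency.} Here I would build an explicit (but inefficient) strategy by coupling \eqref{eq:RAC} with a calibrated forecaster of Nature's mixed actions. Fix $\eta>0$ and let $\{\by^j\}_{j=1}^J$ be an $\eta$-net of $\Delta(\cB)$ in $\ell^1$; for each $j$, \eqref{eq:RAC} supplies some $\bx^{(j)}\in\Delta(\cA)$ with $\om(\bx^{(j)},\by^j)\subseteq\cC$. In parallel, run a calibrated forecaster (itself obtainable from Blackwell approachability) that emits at each round a forecast $\bq_t\in\{\by^1,\ldots,\by^J\}$ and guarantees, for every $j$, that the empirical average $\ol\by^{(j)}_T$ of $\by_t$ over the $T_j$ rounds with $\bq_t=\by^j$ satisfies $\|\ol\by^{(j)}_T-\by^j\|_1\leq\eta+o_T(1)$ almost surely. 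The first player's rule is then simply to play $\bx_t=\bx^{(j_t)}$ whenever $\bq_t=\by^{j_t}$.

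\emph{Analysis and main obstacle.} Splitting by forecast, the average payoff decomposes as $\tfrac{1}{T}\sum_{t=1}^T\om(\bx_t,\by_t)=\sum_j\tfrac{T_j}{T}\cdot\tfrac{1}{T_j}\sum_{t:\,j_t=j}\om(\bx^{(j)},\by_t)$. Iterated convexity in the second argument places the inner average inside $\om(\bx^{(j)},\ol\by^{(j)}_T)$, which uniform continuity in the second argument then confines to an $\e(\eta)+o_T(1)$-neighborhood of $\om(\bx^{(j)},\by^j)\subseteq\cC$; convexity of $\cC$ propagates this across the $j$-convex combination, yielding a single neighborhood of $\cC$ containing $\tfrac{1}{T}\sum_t\om(\bx_t,\by_t)$. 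Letting $T\to\infty$ and then $\eta\to 0$, via a doubling restart or a schedule $\eta_T\to 0$ chosen in view of the calibration rate, delivers robust approachability. The main obstacle is exactly this last patching of the two limits into a single strategy meeting the uniform-in-$T$ requirement of the definition; that patching is the source of the efficiency loss and slower rates announced by the authors just before the theorem.
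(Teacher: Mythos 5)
Your proposal follows the paper's own proof in both directions. The necessity argument is identical: fix $\by_0$ witnessing the failure of~\eqref{eq:RAC}, use uniform continuity in the first argument to make $D_{\by_0}$ continuous on the compact $\Delta(\cA)$ and hence bounded below by some $D_{\min}>0$, and use concavity in the first argument to get $\om(\ol{\bx}_T,\by_0)\subseteq\frac{1}{T}\sum_t\om(\bx_t,\by_0)$, which pushes the lower bound through the time average. The sufficiency argument is also the paper's: a calibrated forecaster over an $\eta$-grid of $\Delta(\cB)$, best replies supplied by~\eqref{eq:RAC}, convexity in the second argument to collapse each cell's average into $\om\bigl(\bx^{(j)},\ol{\by}^{(j)}_T\bigr)$, uniform continuity to land in a neighborhood of $\cC$, and a regime scheme to send $\eta\to 0$.

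One step as you state it would fail: no calibrated forecaster guarantees, \emph{for every} cell $j$, that $\norm[\ol{\by}^{(j)}_T-\by^j]_1\leq\eta+o_T(1)$. A cell visited only a vanishing fraction of the time can have its conditional empirical average arbitrarily far from $\by^j$; calibration only controls the \emph{weighted} quantity $\frac{T_j}{T}\bigl(\norm[\ol{\by}^{(j)}_T-\by^j]_1-\eta\bigr)_+$. The paper therefore splits the cells into two groups: those with $T_j/T$ large enough, for which the weighted bound does force $\norm[\ol{\by}^{(j)}_T-\by^j]_1\leq\eta$ and uniform continuity applies, and those with $T_j/T$ small, whose aggregate contribution is absorbed into the error term using only the boundedness of $\om$. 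Your argument needs this case distinction; it is routine and does not change the structure, but without it the claimed per-cell closeness is simply not available.
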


\begin{proofof}{the second statement of Theorem~\ref{th:robappr-gal}}
The proof of Corollary~\ref{cor:contD} extends to the case considered here
and shows, thanks to the ad hoc consideration of the result stated in Lemma~\ref{lm:DVT}
as following from Definition~\ref{def:unifcont}, that for all $\by \in \Delta(\cB)$, the mapping
$D_{\by}$ is still continuous over $\Delta(\cA)$.
We now proceed by contradiction and assume that~\eqref{eq:RAC}
is not satisfied; the first part of the proof of the necessity of~\eqref{eq:RAC}
in Theorem~\ref{th:rob-appr} also applies to the present case: there exists
$\by_0$ such that $D_{\by_0} \geq D_{\min} > 0$ over $\Delta(\cA)$.
It then suffices to note that whenever the second player resorts to $\by_t = \by_0$
at all rounds $t \geq 1$, then for all strategies of the first player,
the quantity of interest in robust approachability
can be lower bounded as follows, thanks to the concavity in the first argument:
\begin{multline}
\nonumber
\sup \left\{ \inf_{c \in \cC} \,\, \norm[d - c]_2 : \ \ d \in \frac{1}{T} \sum_{t=1}^T \om(\bx_t,\by_0) \right\} \\
\geq
\sup \left\{ \inf_{c \in \cC} \,\, \norm[d - c]_2 : \ \ d \in \om\!\left(\frac{1}{T} \sum_{t=1}^T \bx_t, \,\, \by_0\right) \right\}
= D_{\by_0}\!\left(\frac{1}{T} \sum_{t=1}^T \bx_t\right) \geq D_{\min} > 0\,.
\end{multline}
Therefore, $\cC$ is $\om$--robust approachable with mixed actions by no strategy of the first player.
\end{proofof}

The proof of the first statement of Theorem~\ref{th:robappr-gal}
relies on the use of approximately calibrated strategies of the first player,
as introduced and studied (among others) by~\citemor{Dawid82}, \citemor{FoVo98}, \citemor{MaSt10}.
Formally, given $\eta > 0$, an $\eta$--calibrated strategy of the first player considers some
finite covering of $\Delta(\cB)$ by $N_{\eta}$ balls of radius ${\eta}$ and abides by the following constraints.
Denoting by $\by^1,\ldots,\by^{N_{\eta}}$ the centers of the balls in the covering (they form what
will be referred to later on as an $\eta$--grid),
such a strategy chooses only forecasts in $\bigl\{ \by^1,\ldots,\by^{N_{\eta}} \bigr\}$.
We thus denote by $L_t$ the index chosen in $\bigl\{ 1,\ldots,N_{\eta} \bigr\}$ at round $t$
and by
\[
N_T(\ell) = \sum_{t=1}^T \ind_{ \{ L_t = \ell \} }
\]
the total number of rounds within the first $T$ ones when the element $\ell$ of
the grid was chosen.
We denote by $(\,\cdot\,)_+$ the function that gives the nonnegative
part of a real number.
The final condition to be satisfied is that for all $\delta > 0$, there exists
an integer $T_\delta$ such that for all strategies of the second player,
with probability at least $1 - \delta$,
for all $T \geq T_\delta$,
\begin{equation}
\label{def:etacal}
\sum_{\ell = 1}^{N_\eta} \frac{N_T(\ell)}{T} \left( \norm[\by^\ell - \frac{1}{N_T(\ell)} \sum_{t=1}^T \by_t \ind_{ \{ L_t = \ell \} }
]_1 - \eta \right)_{\!\! +} \,\, \leq \delta\,.
\end{equation}
This calibration criterion is slightly stronger than the classical $\eta$--calibration score
usually considered in the literature, which consists of omitting nonnegative parts in the
criterion above and ensuring that for all strategies of the second player, with probability at least $1 - \delta$,
for all $T \geq T_\delta$,
\begin{equation}
\label{def:etacal2}
\sum_{\ell = 1}^{N_\eta} \frac{N_T(\ell)}{T} \, \norm[\by^\ell - \frac{1}{N_T(\ell)} \sum_{t=1}^T \by_t \ind_{ \{ L_t = \ell \} }
]_1 \,\, \leq \eta + \delta\,.
\end{equation}
The existence of a calibrated strategy in the sense of~\eqref{def:etacal}
however follows from the same approachability-based construction studied in~\citemor{MaSt10}
to get~\eqref{def:etacal2} and is detailed in the appendix. In the sequel we will only use
the following consequence of calibration: that
for all strategies of the second player, with probability at least $1 - \delta$,
for all $T \geq T_\delta$,
\begin{equation}
\label{def:etacal3}
\max_{\ell = 1,\ldots,N_\eta} \frac{N_T(\ell)}{T} \left( \norm[\by^\ell - \frac{1}{N_T(\ell)} \sum_{t=1}^T \by_t \ind_{ \{ L_t = \ell \} }
]_1 - \eta \right)_{\!\! +} \,\, \leq \delta\,.
\end{equation}

\begin{proofof}{the first statement of Theorem~\ref{th:robappr-gal}}
The insight of this proof is similar to the one illustrated in~\citemor{Per09}.
We first note that it suffices to prove that for all $\epsilon > 0$, the set $\cC_\epsilon$ defined as
the $\epsilon$--neighborhood of $\cC$
is $\om$--robust approachable with mixed actions; this is so up to
proceeding in regimes $r = 1,\,2,\,\ldots$
each corresponding to a dyadic value $\epsilon_r = 2^{-r}$ and lasting for a number of rounds
carefully chosen in terms of the length of the previous regimes.

Therefore, we fix $\e > 0$ and associate with it a modulus of continuity $\eta > 0$ given by the
uniform continuity of $\om$ in its second argument.
We consider an $\eta/2$--calibrated strategy of the first player, which we will use as an auxiliary
strategy. Since~\eqref{eq:RAC} is satisfied, we may associate with each element $\by^\ell$ of
the underlying $\eta/2$--grid a mixed action $\bx^\ell \in \Delta(\cA)$ such that
$\om\bigl( \bx^\ell, \by^\ell \bigr) \subseteq \cC$. The main strategy
of the first player then prescribes the use of $\bx_t = \bx^{L_t}$ at each round $t \geq 1$.
The intuition behind this definition is that if $\by^{L_t}$ is forecast by the auxiliary strategy,
then since the latter is calibrated, one should play as good as possible against $\by^{L_t}$; in view of the
aim at hand, which is approaching $\cC$, such a good reply is given by $\bx^{L_t}$.

To assess the constructed strategy, we group rounds according to the values $\ell$ taken
by the $L_t$; to that end, we recall that
$N_T(\ell)$ denotes the number of rounds in which $\by^\ell$ was forecast and $\bx^\ell$ was played.
The average payoff up to round $T$ is then rewritten as
\[
\frac{1}{T} \sum_{t=1}^T \om(\bx_t,\by_t) = \sum_{\ell=1}^{N_{\eta/2}} \frac{N_T(\ell)}{T} \left( \frac{1}{N_T(\ell)} \sum_{t=1}^T
\om\bigl(\bx^\ell,\by_t\bigr) \ind_{ \{ L_t = \ell \} } \right)\,.
\]
We denote for all $\ell$ such that $N_T(\ell) > 0$
the average of their corresponding mixed actions $\by_t$ by
\[
\ol{\by}_T^\ell = \frac{1}{N_T(\ell)} \sum_{t=1}^T
\by_t \ind_{ \{ L_t = \ell \} }\,.
\]
The convexity of $\om$ in its second argument leads to the
inclusion
\[
\frac{1}{T} \sum_{t=1}^T \om(\bx_t,\by_t) =
\sum_{\ell=1}^{N_{\eta/2}} \frac{N_T(\ell)}{T} \left( \frac{1}{N_T(\ell)} \sum_{t=1}^T
\om\bigl(\bx^\ell,\by_t\bigr) \ind_{ \{ L_t = \ell \} } \right)
\subseteq
\sum_{\ell=1}^{N_{\eta/2}} \frac{N_T(\ell)}{T} \, \om\bigl(\bx^\ell,\ol{\by}_T^\ell\bigr)\,.
\]
To show that the above-defined strategy $\om$--robust approaches $\cC_\epsilon = \cC + \e \bB$,
it suffices to show that for all $\delta > 0$,
there exists an integer $T'_\delta$ such that for all strategies of the second player,
\[
\P \! \left\{ \forall \, T \geq T'_\delta, \quad
\sum_{\ell=1}^{N_{\eta/2}} \frac{N_T(\ell)}{T} \, \om\bigl(\bx^\ell,\ol{\by}_T^\ell\bigr)
\subseteq \cC + (\e+\delta)\bB \right\} \geq 1-\delta\,.
\]

We denote by $M$ a bound in $\ell^2$--norm on $\om$, i.e.,
for all $\bx \in \Delta(\cA)$ and $\by \in \Delta(\cB)$, the inclusion
$\om(\bx,\by) \subseteq M \bB$ holds.
We let $\delta' = \delta (\eta/2) \big/ \bigl(M \, N_{\eta/2} \bigr)$
and define $T'_\delta$ as the time $T_{\delta'}$
corresponding to~\eqref{def:etacal3}. All statements that follow
will be for all strategies of the second player and with probability at least $1 - \delta' \geq
1 - \delta$, for all $T \geq T'_\delta$, as required. For each index $\ell$ of the grid,
either $\delta' T / N_T(\ell) \leq \eta/2$ or
$\delta' T / N_T(\ell) > \eta/2$.
In the first case, following~\eqref{def:etacal3},
$\norm[\by^\ell - \ol{\by}_T^\ell] \leq \eta/2 + \delta' T / N_T(\ell) \leq \eta$;
since $\eta$ is the modulus of continuity for $\e$,
we get that
\[
\frac{N_T(\ell)}{T} \, \om\bigl(\bx^\ell,\ol{\by}_T^\ell\bigr) \subseteq
\frac{N_T(\ell)}{T} \left( \om\bigl(\bx^\ell,\by^\ell\bigr) + \e \bB \right)
\subseteq
\frac{N_T(\ell)}{T} \bigl( \cC + \e \bB \bigr)\,,
\]
where we used the definition of $\bx^\ell$ to get the second inclusion.
In the second case, using the boundedness of $\om$, we simply write
\[
\frac{N_T(\ell)}{T} \, \om\bigl(\bx^\ell,\ol{\by}_T^\ell\bigr) \subseteq
\frac{N_T(\ell)}{T} \, M \bB \subseteq \frac{\delta'}{\eta/2} \, M \bB\,.
\]
Summing these bounds over $\ell$ yields
\[
\sum_{\ell=1}^{N_{\eta/2}} \frac{N_T(\ell)}{T} \, \om\bigl(\bx^\ell,\ol{\by}_T^\ell\bigr)
\subseteq \cC + \e\bB + \frac{N_{\eta/2} \delta'}{\eta/2} \, M \, \bB
= \cC + (\e+\delta)\,\bB\,,
\]
where we used the definition of $\delta'$ in terms of $\delta$. This concludes the proof.
\end{proofof}


\section{Approachability in games with partial monitoring: statement of the necessary and sufficient
condition; links with robust approachability.}
\label{se:apptogames}

A repeated vector-valued game with partial monitoring is described as follows
(see, e.g., \citemor{MeSoZa94}, \citemor{Rus99}, and the references therein).
The players have respective finite action sets $\cI$ and $\cJ$.
We denote by $r : \cI \times \cJ \to \R^d$ the vector-valued payoff function of the first player
and extend it multi-linearly to $\Delta(\cI) \times \Delta(\cJ)$.
At each round, players simultaneously choose their actions $I_t \in \cI$ and $J_t \in \cJ$,
possibly at random according to probability distributions denoted by
$\bp_t \in \Delta(\cI)$ and $\bq_t \in \Delta(\cJ)$. At the end of a round,
the first player does not observe $J_t$ nor $r(I_t,J_t)$ but only a signal.
There is a finite set $\cH$ of possible signals; the feedback $S_t$ that is given to the
first player is drawn at random according to the distribution
$H(I_t,J_t)$, where the mapping $H : \cI \times \cJ \to \Delta(\cH)$ is known by the first player.

\begin{example}
Examples of such partial monitoring games are provided by, e.g., \citemor{CeLuSt06}, among which we can cite the apple tasting problem,
the label-efficient prediction constraint, and the multi-armed bandit settings.
\end{example}

Some additional notation will be useful.
We denote by $R$ the norm of (the linear extension of) $r$,
\[
R = \max_{(i,j) \in \cI \times \cJ} \bnorm[ r(i,j) ]_2\,.
\]
The cardinalities of the finite sets $\cI$, $\cJ$, and $\cH$
will be referred to as $N_{\cI}$, $N_{\cJ}$, and $N_{\cH}$.

Definition~\ref{def:pure} can be extended as follows in this setting; the only new ingredient
is the signaling structure, the aim is unchanged.

\begin{definition}
Let $\cC \subseteq \R^d$ be some set; $\cC$ is $r$--approachable for the signaling structure $H$ if there exists a strategy
of the first player such that, for all $\eps > 0$,
there exists an integer $T_\eps$ such that for all strategies of the second player,
\[
\P \left\{ \forall \, T \geq T_\eps, \quad
\inf_{c \in \cC} \ \norm[c - \frac{1}{T} \sum_{t=1}^T r(I_t,J_t)]_2 \,\, \leq \e \right\}
\ \geq 1-\eps\,.
\]
That is, the first player has a strategy that ensures that the sequence of his average vector-valued payoffs
converges to the set $\cC$ (uniformly with respect to the strategies of the second player),
even if he only observes the random signals $S_t$ as a feedback.
\end{definition}

\paragraph{Our contributions.}
A necessary and sufficient condition for $r$--approachability with the signaling structure $H$ was
stated and proved by \citemor{Per11}; we therefore need to indicate where our contribution lies.
First, both proofs are constructive but our strategy can be efficient (as soon as some projection
operator can be computed efficiently, e.g., in the cases of external and internal regret minimization
described below) whereas the one of~\citemor{Per11} relies on auxiliary
strategies that are calibrated and that require a grid that is progressively refined
(leading to a step complexity that is exponential in the number $T$ of past steps); the latter construction
is in essence the one used in Section~\ref{sec:robappr-gal}.
Second, we are able to exhibit convergence rates. Third, as far as elegancy is concerned,
our proof is short, compact, and more direct than the one of~\citemor{Per11},
which relied on several layers of notations (internal regret in games with partial
monitoring, calibration of auxiliary strategies, etc.). \medskip

\subsection{Statement of the necessary and sufficient condition for approachability in games with partial monitoring.}
To recall the mentioned approachability condition of~\citemor{Per11}
we need some additional notation: for all $\bq \in \Delta(\cJ)$,
we denote by $\tH(\bq)$ the element in $\Delta(\cH)^\cI$ defined as follows. For all $i \in \cI$,
its $i$--th component is given by the convex combination of probability distributions
over $\cH$
\[
\tH(\bq)_i=H(i,\bq) = \sum_{j \in \cJ} q_j H(i,j)\,.
\]
Finally, we denote by $\cF$ the convex set of feasible vectors of probability distributions over $\cH$:
\[
\cF = \Bigl\{ \tH(\bq) : \ \ \bq \in \Delta(\cJ) \Bigr\}\,.
\]
A generic element of $\cF$ will be denoted by $\sigma \in \cF$ and
we define the set-valued function $\om$, for all $\bp \in \Delta(\cI)$ and $\sigma \in \cF$, by
\[
\om(\bp,\sigma) = \bigl\{ r(\bp,\bq') : \ \ \bq' \in \Delta(\cJ) \
\mbox{\rm such that} \ \tH(\bq') = \sigma \bigr\}\,.
\]

The necessary and sufficient condition exhibited by~\citemor{Per11} for the $r$--approachability of $\cC$ with the signaling
structure $H$ can now be recalled. In the sequel we will refer to this condition as Condition~{\apm},
an acronym that stands for ``approachability with partial monitoring.''

\begin{condition}[referred to as Condition (APM)]
\label{cd:1}
The signaling structure $H$, the vector-payoff function $r$, and the set $\cC$ satisfy
\[
\forall \, \bq \in \Delta(\cJ), \ \ \exists \, \bp \in \Delta(\cI), \ \
\forall \, \bq' \in \Delta(\cJ), \qquad  \tH(\bq) = \tH(\bq') \ \ \Rightarrow \ \
r(\bp,\bq') \in \cC\,.
\]
The condition can be equivalently reformulated as
\begin{equation}
\label{eq:APM}
\tag{\APM}
\forall \, \sigma \in \cF, \ \ \exists \, \bp \in \Delta(\cI), \quad \qquad
\om(\bp,\sigma) \subseteq \cC\,.
\end{equation}
\end{condition}

\paragraph{This condition is necessary.}
The subsequent sections show (in a constructive way)
that Condition~{\apm} is sufficient for $r$--approachability
of closed convex sets $\cC$ given the signaling structure $H$.
That this condition is necessary was already proved in Section~3.1 of~\citemor{Per11}.

\subsection{Links with robust approachability.}
\label{sec:links-calibr}

As will become clear in the proof of Theorem~\ref{TheoGeneralCase},
the key in our problem will be to ensure the robust approachability of $\cC$
with the following non-linear set-valued payoff function,
that is however concave--convex in the sense of Definition~\ref{def:convconc}.

\begin{lemma}
\label{lm:cvxconc}
The function
\[
(\bp,\bq) \in \Delta(\cI) \times \Delta(\cJ) \,\, \longmapsto \,\,
\om\bigl( \bp, \, H(\bq) \bigr)\,.
\]
is concave in its first argument and
convex in its second argument.
\end{lemma}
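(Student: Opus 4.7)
The statement is essentially a bookkeeping exercise that exploits two linearities: the multi-linearity of $r$ on $\Delta(\cI) \times \Delta(\cJ)$ and the linearity of $\tH$ on $\Delta(\cJ)$ (which is immediate from its definition $\tH(\bq)_i = \sum_j q_j H(i,j)$). My plan is to verify the two inclusions directly from the definition
\[
\om\bigl( \bp, \tH(\bq) \bigr) = \bigl\{ r(\bp,\bq') : \ \bq' \in \Delta(\cJ), \ \tH(\bq') = \tH(\bq) \bigr\}
\]
by picking an arbitrary element of the smaller/larger set and exhibiting a suitable decomposition on the other side.

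For concavity in $\bp$: I would fix $\bp,\bp' \in \Delta(\cI)$, $\bq \in \Delta(\cJ)$, and $\alpha \in [0,1]$, then take any $v \in \om\bigl(\alpha\bp + (1-\alpha)\bp',\tH(\bq)\bigr)$. By definition, there exists $\bq'$ with $\tH(\bq') = \tH(\bq)$ such that $v = r\bigl(\alpha\bp + (1-\alpha)\bp',\bq'\bigr)$. Linearity of $r$ in its first argument gives $v = \alpha \, r(\bp,\bq') + (1-\alpha) \, r(\bp',\bq')$, and since the same $\bq'$ witnesses membership in both $\om(\bp,\tH(\bq))$ and $\om(\bp',\tH(\bq))$, the required inclusion follows.

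For convexity in $\bq$: I would fix $\bp \in \Delta(\cI)$, $\bq, \bq' \in \Delta(\cJ)$, and $\alpha \in [0,1]$, then take $v_1 \in \om(\bp,\tH(\bq))$ and $v_2 \in \om(\bp,\tH(\bq'))$ with associated witnesses $\bq_1, \bq_2 \in \Delta(\cJ)$ satisfying $\tH(\bq_1) = \tH(\bq)$, $\tH(\bq_2) = \tH(\bq')$, and $v_k = r(\bp,\bq_k)$. The candidate witness for $\alpha v_1 + (1-\alpha) v_2$ is $\bq'' = \alpha \bq_1 + (1-\alpha) \bq_2 \in \Delta(\cJ)$: linearity of $\tH$ gives $\tH(\bq'') = \alpha \tH(\bq) + (1-\alpha) \tH(\bq') = \tH(\alpha \bq + (1-\alpha)\bq')$, while linearity of $r$ in its second argument gives $r(\bp,\bq'') = \alpha v_1 + (1-\alpha) v_2$. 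Hence the convex combination lies in $\om\bigl(\bp, \tH(\alpha \bq + (1-\alpha) \bq')\bigr)$.

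There is no real obstacle here: the only subtlety is to remember that $\om$ is defined via an existential quantifier over $\bq'$ constrained by $\tH(\bq') = \sigma$, so the two linearities must be applied to the witnesses rather than to $\bq$ or $\bq'$ themselves. In particular, for convexity in $\bq$ one must be careful not to try to use $\bq$ and $\bq'$ directly as witnesses; the convex combination $\bq''$ of the witnesses is the right object, and its image through $\tH$ matches $\tH(\alpha\bq + (1-\alpha)\bq')$ precisely because $\tH$ is linear.
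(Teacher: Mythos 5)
Your proof is correct: the paper actually states Lemma~\ref{lm:cvxconc} without proof, and your direct verification --- applying linearity of $r$ in $\bp$ to the single witness $\bq'$ for concavity, and taking the convex combination of the two witnesses together with linearity of $\tH$ for convexity --- is precisely the intended argument. The one subtlety you flag (that the linearities must act on the witnesses, not on $\bq$ itself) is indeed the whole content of the lemma, and you handle it correctly.
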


Unfortunately, efficient strategies for robust approachability were only proposed in the linear case,
not in the concave--convex case. But we illustrate in the next example (and provide a general theory in
the next section) how working in lifted spaces can lead to linearity and hence to efficiency.

\begin{example}
\label{ex:1}
We consider a game in which the second player (the column player) can force the first player (the row player)
to play a game of matching pennies in the dark by choosing actions $L$ or $M$; in the matrix below, the real numbers denote the payoff
while $\sigdark$ and $\sigother$ denote the two possible signals. The respective sets of actions are
$\cI = \{T, \, B\}$ and $\cJ = \{L, \, M, \, R\}$.
\begin{center}
\begin{tabular}{ccccc}
\hline
& & $L$ & $M$ & $R$ \\
\hline
$T$ & & $1$ / $\sigdark$ & $-1$ / $\sigdark$ & $2$ / $\sigother$ \\
$B$ & & $-1$ / $\sigdark$ & $1$ / $\sigdark$ & $3$ / $\sigother$ \\
\hline
\end{tabular}
\end{center}
\end{example}

In this example we only study the mapping $\bp \mapsto \om(\bp,\sigdark)$ and show that it is
piecewise linear on $\Delta(\cI)$, thus, is induced by a linear mapping defined on a lifted space.

We introduce a set $\cA = \{ \bp_T, \, \bp_B, \, \bp_{1/2} \}$ of possibly mixed actions extending the set $\cI = \{ T, \, B\}$ of pure actions;
the set $\cA$ is composed of
\[
\bp_T = \delta_T, \quad \bp_B = \delta_B, \quad \mbox{and} \quad
\bp_{1/2} = \frac{1}{2}\delta_T+\frac{1}{2}\delta_B\,.
\]
Each mixed action in $\Delta(\cI)$ can be uniquely written as $\bp_\lambda = \lambda \, \delta_B + (1-\lambda) \, \delta_T$
for some $\lambda \in [0,1]$.
Now, for $\lambda \geq 1/2$, first,
\[
\bp_\lambda = (2\lambda-1) \, \delta_B + \bigl( 1 - (2\lambda-1) \bigr) \, \bp_{1/2}\,;
\]
second, by definition of $\om$,
\[
\om \bigl( \bp_\lambda, \, \sigdark \bigr) = [1-2\lambda, \, 2\lambda-1]\,;
\]
since in particular $\om \bigl( \bp_{1/2}, \, \sigdark \bigr) = \{ 0 \}$
and $\om(\delta_B,\sigdark) = [-1,1]$,
we have the convex decomposition
\[
\om \bigl( \bp_\lambda, \, \sigdark \bigr) = (2\lambda-1) \, \om(\delta_B,\sigdark)
+ \bigl( 1 - (2\lambda-1) \bigr) \, \om(\bp_{1/2},\sigdark)\,,
\]
which can be restated as
\[
\om \bigl( \bp_\lambda, \, \sigdark \bigr) =
\om \Bigl( (2\lambda-1) \, \delta_B + \bigl( 1 - (2\lambda-1) \bigr) \, \bp_{1/2},
\, \sigdark \Bigr) = (2\lambda-1) \, \om(\delta_B,\sigdark)
+ \bigl( 1 - (2\lambda-1) \bigr) \, \om(\bp_{1/2},\sigdark)\,.
\]
That is, $\om( \,\cdot\,, \, \sigdark)$ is linear on the subset of $\Delta(\cI)$ corresponding
to mixed actions $\bp_\lambda$ with $\lambda \geq 1/2$.

A similar property holds the subset of distributions with $\lambda \leq 1/2$, so that
we have proved that $\om( \,\cdot\,, \, \sigdark)$ is piecewise linear on $\Delta(\cI)$.

The linearity on a lifted space comes from the following observation:
$\om$ is induced by the linear extension to $\Delta(\cA)$ of the
restriction of $\om$ to $\cA$ (see Definition~\ref{def:oom} for a more formal statement).

\section{Application of robust approachability to games with partial monitoring: for a particular class of games
encompassing regret minimization.}
\label{sec:main}

In this section we consider the case where the signaling structure has some special
properties described below (linked to linearity properties on lifted spaces)
and that can be exploited to get efficient strategies.
The case of general signaling structures is then considered in Section~\ref{sec:galstruc}
but the particular class of games considered here is already rich enough to encompass
the minimization of external and internal regret.

\subsection{Approachability in bi-piecewise linear games.}
\label{se:bipiecewise}

To define bi-piecewise linearity of a game, we start from a technical lemma that shows that $\om(\bp,\sigma)$ can be written as a {\em finite} convex combination of sets of the form $\om(\bp,b)$, where $b$ belongs to some finite set $\cB \subseteq \cF$ that depends on the game.
Under the additional assumption of piecewise linearity of
the thus-defined mappings $\om(\,\cdot\,,b)$, we
then describe a (possibly) efficient strategy for approachability followed by convergence rate guarantees.

\subsubsection{Bi-piecewise linearity of a game -- A preliminary technical result.}

\begin{lemma}
\label{lm:geocorr}
For any game with partial monitoring,
there exists a finite set $\cB \subset \cF$ and a piecewise-linear (injective) mapping $\Phi: \cF \to \Delta(\cB)$ such that
\[
\forall \, \sigma \in \cF, \quad \forall \, \bp \in \Delta(\cI), \qquad \quad
\om(\bp,\sigma) = \sum_{b \in \cB} \Phi_b(\sigma) \, \om(\bp,b)\,,
\]
where we denoted the convex weight vector $\Phi(\sigma) \in \Delta(\cB)$ by $\bigl( \Phi_b(\sigma) \bigr)_{b \in \cB}$.
\end{lemma}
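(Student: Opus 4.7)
My plan is to construct $\cB$ and $\Phi$ by lifting $\cF$ to the simplex $\Delta(\cB)$ in a manner dual to what is done on the $\Delta(\cI)$ side after Example~\ref{ex:1}. I take $\cB = \{\tH(\delta_j) : j \in \cJ\}$, which is finite and whose convex hull equals $\cF$ by linearity of $\tH$ on $\Delta(\cJ)$.

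To define $\Phi$, I triangulate $\cF$ with $\cB$ as vertex set, not arbitrarily but \emph{compatibly} with the linear map $\tH$, in the sense that on each simplex of the triangulation the extreme points of the fiber $\tH^{-1}(\sigma)$ are affine (indeed linear) functions of $\sigma$. I then set $\Phi(\sigma)$ piecewise to be the vector of barycentric coordinates of $\sigma$ in the simplex containing it (with weight $0$ on elements of $\cB$ that are not vertices of that simplex). The uniqueness of barycentric coordinates forces agreement on shared faces, hence continuity and piecewise linearity of $\Phi$; injectivity is immediate because $\sigma = \sum_b \Phi_b(\sigma)\, b$.

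The main identity $\om(\bp,\sigma) = \sum_{b \in \cB} \Phi_b(\sigma)\, \om(\bp, b)$ then reduces, via the linearity of $r(\bp,\cdot)$, to a Minkowski-sum identity on fibers:
\[
\tH^{-1}(\sigma) \;=\; \sum_{k} \lambda_k\, \tH^{-1}(b_k)\,,
\]
where $\sigma = \sum_k \lambda_k b_k$ is the barycentric decomposition on the simplex containing $\sigma$. The inclusion $\supseteq$ in this fiber identity is immediate from linearity of $\tH$: any $\sum_k \lambda_k \bq^k$ with $\bq^k \in \tH^{-1}(b_k)$ satisfies $\tH(\sum_k \lambda_k \bq^k)=\sum_k \lambda_k b_k = \sigma$. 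The reverse inclusion $\subseteq$ is where the compatibility of the triangulation matters: within a simplex the vertices of $\tH^{-1}(\sigma)$ can be parameterized linearly in $\sigma$, so the fiber itself becomes a Minkowski-affine function of $\sigma$, and the identity propagates from the vertices $b_k$ to every $\sigma$ in the simplex.

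The principal obstacle is therefore constructing the compatible triangulation. An arbitrary one does not suffice: in a three-column variant of Example~\ref{ex:1} where $\tH(\delta_3) = (1/2,1/2)$ lies in the interior of $\cF = [(1,0),(0,1)]$, the naive decomposition $(1/2,1/2) = \tfrac12 (1,0) + \tfrac12 (0,1)$ fails to recover $\om(\bp,(1/2,1/2))$, whereas keeping $(1/2,1/2) \in \cB$ as a triangulation vertex repairs the identity. A clean general construction uses the fiber-polytope / secondary-polytope machinery of Billera and Sturmfels applied to the linear surjection $\tH : \Delta(\cJ) \to \cF$, which produces a coherent polyhedral subdivision of $\cF$ on which fibers vary affinely; equivalently, one can proceed by induction on $\dim \cF$, slicing along the hyperplanes that separate the combinatorial chambers of $\tH$ and refining each chamber into simplices with vertices in $\cB$.
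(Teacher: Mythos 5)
Your overall architecture is the right one and is in fact the paper's: decompose $\cF$ into simplices on which the fiber map $\tH^{-1}$ is linear, let $\Phi$ be the barycentric coordinates within the simplex containing $\sigma$, and push the resulting Minkowski identity on fibers through the linearity of $r(\bp,\cdot)$. You also correctly isolate where the work lies (the inclusion $\subseteq$ on fibers, and the existence of a compatible subdivision). The gap is your choice $\cB = \{\tH(\delta_j) : j \in \cJ\}$: in general \emph{no} triangulation of $\cF$ with this vertex set makes the fibers vary linearly on each simplex, so the object your construction requires does not exist. The decomposition on which $\tH^{-1}$ is piecewise linear is the chamber complex of the projection $\tH : \Delta(\cJ) \to \cF$, i.e., the common refinement of the images of all faces of $\Delta(\cJ)$, and its vertices are typically intersection points of such images rather than images of vertices. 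Concretely, take $N_\cJ = 4$ with $\tH(\delta_1),\ldots,\tH(\delta_4)$ the four corners of a square $\cF$ lying in a two-dimensional affine subspace of $\Delta(\cH)^{\cI}$. The fiber over each corner is the singleton $\{\delta_j\}$, so for any triangulation of the square using only the corners the right-hand side $\sum_{b} \Phi_b(\sigma)\,\om(\bp,b)$ is a single point for every $\sigma$; yet the fiber over any interior $\sigma$ is a one-dimensional segment (its combinatorial type switches across the two diagonals, whose intersection point---the center of the square---is forced to be a vertex of any compatible subdivision), so $\om(\bp,\sigma)$ is a nondegenerate interval for generic $r$ and the identity fails. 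Your own cautionary example does not expose this because there the extra vertex you need happens to equal some $\tH(\delta_j)$; in the square example it does not, and your appeal to the fiber-polytope machinery cannot rescue the stated $\cB$, since the subdivision it produces again has vertices outside $\{\tH(\delta_j)\}$.

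The repair is exactly what the paper does: use the fact (Proposition~2.4 of Rambau and Ziegler) that $\tH^{-1}$ is piecewise linear on a finite polyhedral decomposition $\{P_1,\ldots,P_K\}$ of $\cF$, triangulate each $P_k$ without introducing new vertices, and take $\cB$ to be the set of \emph{all} vertices of \emph{all} the resulting simplices---a finite subset of $\cF$ that in general strictly contains points other than the $\tH(\delta_j)$. With that $\cB$, your barycentric definition of $\Phi$, the agreement on shared faces, the injectivity via $\sigma = \sum_b \Phi_b(\sigma)\,b$, and the fiber identity all go through verbatim.
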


\begin{proof}
Since $\tH$ is linear on the polytope $\Delta(\cJ)$,
Proposition~2.4 in~\citemor{RZ96} implies
that its inverse application $\tH^{-1}$ is a piecewise linear mapping of $\cF$ into the subsets of $\Delta(\cJ)$. This
means that there exists a finite decomposition of $\cF$ into polytopes $\{P_1, \dots, P_K\}$ each on which $\tH^{-1}$ is linear.
Up to a triangulation (see, e.g., Chapter~14 in~\cite{GR04}), we can assume that each $P_k$ is a simplex.
Denote by $\cB_k \subseteq \cF$ the set of vertices of $P_k$; then, the finite subset stated in the lemma is
\[
\cB= \bigcup_{k=1}^K \cB_k\,,
\]
the set of all vertices of all the simplices.

Fix any $\sigma \in \cF$. It belongs to some simplex $P_k$, so that there exists
a convex decomposition $\sigma = \sum_{b \in \cB_k} \lambda_b \, b$; this decomposition is unique within the simplex $P_k$.
If $\sigma$ belongs to two different simplices, then it actually belongs to their common face and
the two possible decompositions coincide (some coefficients $\lambda_b$ in the above decomposition are null). All in all,
with each $\sigma \in \cF$, we can associate a unique decomposition in $\cB$,
\[
\sigma = \sum_{b \in \cB} \Phi_b(\sigma) \, b\,,
\]
where the coefficients $\bigl( \Phi_b(\sigma) \bigr)_{b \in \cB}$ form a convex weight vector over $\cB$, i.e., belong
to $\Delta(\cB)$; in addition, $\Phi_b(\sigma) > 0$ only if $b \in \cB_k$, where $k$ is such that $\sigma \in P_k$.

Since $\tH^{-1}$ is linear on each simplex $P_1,\,\ldots,\,P_K$, we therefore get
\[
\tH^{-1}(\sigma) = \sum_{b \in \cB} \Phi_b(\sigma) \, \tH^{-1}(b)\,.
\]
Finally, the result is a consequence of the fact that
\[
\om(\bp,\sigma) = r\!\left(\bp, \, \tH^{-1}(\sigma)\right) = r\!\left(\bp,\sum_{b \in \cB} \Phi_b(\sigma) \, \tH^{-1}(b)\right)\,,
\]
which implies, by linearity of $r$, that
\[
\om(\bp,\sigma)=\sum_{b \in \cB} \Phi_b(\sigma) \, r\!\left(\bp,\tH^{-1}(b)\right) = \sum_{b \in \cB} \Phi_b(\sigma) \, \om(\bp,b)\,,
\]
which concludes the proof.
\end{proof}

\begin{remark}
\label{rk:Lip}
The proof shows that $\Phi$ is piecewise linear on a finite decomposition of $\cF$;
it is therefore Lipschitz on $\cF$.
We denote by $\kappa_\Phi$ its Lipschitz constant with respect to the $\ell^2$--norms.
\end{remark}

The main contribution  of this subsection (Definition~\ref{def:oom}) relies on the following additional assumption.

\begin{assumption}
\label{As:BPL}
A game is bi-piecewise linear if $\om(\,\cdot\,,b)$ is piecewise linear on $\Delta(\cI)$ for every $b \in \cB$.
\end{assumption}

Assumption~\ref{As:BPL} means that for all $b \in \cB$ there exists
a decomposition of $\Delta(\cI)$ into polytopes each on which $\om(\,\cdot\,,b)$ is linear.
Since $\cB$ is finite, there exists a finite number of such decompositions, and thus there exists a
decomposition to polytopes that refines all of them. (The latter is generated by the intersection of
all considered polytopes as $b$ varies.) By construction, every $\om(\,\cdot\,,b)$ is linear on any of the polytopes of this common decomposition.
We denote by $\cA \subset \Delta(\cI)$ the finite subset of all their vertices: a construction similar to the one used in the proof
of Lemma~\ref{lm:geocorr} (provided below) then leads to a piecewise linear (injective) mapping $\Theta: \Delta(\cI) \to \Delta(\cA)$, where $\Theta(\bp)$ is the decomposition of $\bp$ on the vertices of the polytope(s) of the decomposition to which it belongs, satisfying
\[
\forall \, b \in \cB, \quad \forall \, \bp \in \Delta(\cI), \qquad
\om(\bp,b) = \sum_{a \in \cA} \Theta_a(\bp) \, \om(a,b)\,,
\]
where we denoted the convex weight vector $\Theta(\bp) \in \Delta(\cA)$ by $\bigl( \Theta_a(\bp) \bigr)_{a \in \cA}$.
This, Lemma~\ref{lm:geocorr}, and Assumption~\ref{As:BPL}
show that on a lifted space, $\om$ coincides with a bi-linear mapping $\oom$, as is made formal
in the next definition.

\begin{definition}
\label{def:oom}
We denote by $\oom$ the linear extension to $\Delta(\cA \times \cB)$ of the restriction of $\om$ to $\cA \times \cB$,
so that for all $\bp \in \Delta(\cI)$ and $\sigma \in \cF$,
\[
\om(\bp,\sigma) = \oom \bigl( \Theta(\bp), \, \Phi(\sigma) \bigr)\,.
\]
\end{definition}

\subsubsection{Construction of a strategy to approach $\cC$.}

The approaching strategy for the original problem is based on a strategy $\Psi$ for $\oom$--approachability of $\cC$,
provided by Theorem~\ref{th:rob-appr}; we therefore first need to prove the existence
of such a $\Psi$.

\begin{lemma}
\label{lm:Psi}
Under Condition~{\apm},
the closed convex set $\cC$ is $\oom$--robust approachable.
\end{lemma}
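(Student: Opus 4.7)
The plan is to verify that Theorem~\ref{th:rob-appr} can be applied to the set-valued payoff $\oom$ (which is linear on $\Delta(\cA) \times \Delta(\cB)$ by construction) with target set $\cC$. Since $\cA \subset \Delta(\cI)$ and $\cB \subset \cF$ are finite and $r$ is bounded in $\ell^2$--norm by $R$, the function $\oom$ is bounded as well, so the only non-trivial step is to check that Condition~\eqref{eq:RAC} holds for $\oom$; that is, to show that for every $\mathbf{y} \in \Delta(\cB)$ there exists some $\mathbf{x} \in \Delta(\cA)$ with $\oom(\mathbf{x}, \mathbf{y}) \subseteq \cC$. Approachability will then follow from Theorem~\ref{th:rob-appr} applied to the finite set-valued game with linear payoff $\oom$.

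Given $\mathbf{y} = (y_b)_{b \in \cB} \in \Delta(\cB)$, the plan is to build a candidate $\mathbf{x}$ as follows. First, set $\sigma = \sum_{b \in \cB} y_b \, b$; since each $b \in \cB \subseteq \cF$ and $\cF$ is convex (as the image of $\Delta(\cJ)$ under the linear map $\tH$), one has $\sigma \in \cF$. Condition~{\apm} then provides some $\bp \in \Delta(\cI)$ with $\om(\bp, \sigma) \subseteq \cC$, and I will take $\mathbf{x} = \Theta(\bp)$. The key claim to establish is the inclusion
\[
\oom\bigl( \Theta(\bp), \, \mathbf{y} \bigr) \subseteq \om(\bp, \sigma)\,,
\]
after which the right-hand side is already contained in $\cC$ by the choice of $\bp$.

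To prove the displayed inclusion, I will unfold the bi-linear extension: using $\om(\bp, b) = \sum_{a \in \cA} \Theta_a(\bp)\, \om(a, b)$ (which holds by the linearity of $\om(\,\cdot\,,b)$ on the polytope containing $\bp$), any element of $\oom(\Theta(\bp), \mathbf{y})$ can be written as $\sum_{b \in \cB} y_b \, d_b$ with $d_b \in \om(\bp, b)$. By definition of $\om(\bp, b)$, each $d_b$ equals $r(\bp, \bq^b)$ for some $\bq^b \in \Delta(\cJ)$ with $\tH(\bq^b) = b$. Setting $\bq^{\mathbf{y}} = \sum_b y_b \bq^b \in \Delta(\cJ)$ and using linearity of $\tH$ and of $r(\bp, \,\cdot\,)$ yields $\tH(\bq^{\mathbf{y}}) = \sum_b y_b b = \sigma$ and $\sum_b y_b d_b = r(\bp, \bq^{\mathbf{y}})$, so this element belongs to $\{r(\bp, \bq') : \tH(\bq') = \sigma\} = \om(\bp, \sigma)$, as wanted.

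The main point of vigilance, rather than a genuine obstacle, is that $\mathbf{y} \in \Delta(\cB)$ may fail to coincide with $\Phi(\sigma)$ for $\sigma = \sum_b y_b b$: the second player in the lifted $\oom$--game is not restricted to the ``canonical'' decompositions produced by the piecewise-linear map $\Phi$ from Lemma~\ref{lm:geocorr}. The computation above handles this cleanly because it only uses that $\sum_b y_b b = \sigma$, together with the linearity of $\tH$ and $r$, and does not require $\mathbf{y}$ to arise from any specific decomposition. Once~\eqref{eq:RAC} is verified for $\oom$ in this way, Theorem~\ref{th:rob-appr} delivers the $\oom$--robust approachability of $\cC$.
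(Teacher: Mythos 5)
Your proposal is correct and follows essentially the same route as the paper: define $\sigma = \sum_b y_b\, b$, invoke Condition~{\apm} to get $\bp$, take $\bx = \Theta(\bp)$, and establish $\oom(\Theta(\bp),\by) \subseteq \om(\bp,\sigma) \subseteq \cC$ before applying Theorem~\ref{th:rob-appr}. The only (harmless) difference is that you verify the key inclusion element-wise via the definition of $\om$ and the linearity of $r$ and $\tH$, whereas the paper packages the same computation as convexity of $\om$ in its second argument combined with the decomposition of Lemma~\ref{lm:geocorr}.
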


\begin{proof}
We show that Condition~\eqref{eq:RAC} in Theorem~\ref{th:rob-appr} is satisfied, that is,
that for all $\by \in \Delta(\cB)$, there exists some $\bx \in \Delta(\cA)$ such that $\oom(\bx,\by) \subseteq \cC$.
With such a given $\by \in \Delta(\cB)$, we associate\footnote{Note however that we do not necessarily have that $\Phi(\sigma)$ and $\by$
are equal, as $\Phi$ is not a one-to-one mapping (it is injective but not surjective).} the feasible vector of signals
$\sigma = \sum_{b \in \cB} y_b \, b \in \cF$ and let $\bp$ be given by Condition~{\apm}, so that $\om(\bp,\sigma)\subseteq\cC$.
By linearity of $\oom$ (for the first equality),
by convexity of $\om$ in its second argument (for the first inclusion),
by Lemma~\ref{lm:geocorr} (for the second and fourth equalities),
by construction of $\cA$ (for the third equality),
\begin{eqnarray*}
\oom\bigl(\Theta(\bp),\by\bigr) = \sum_{a \in \cA} \Theta_a(\bp) \sum_{b \in \cB} y_b \, \om(a,b)
& \subseteq & \sum_{a \in \cA} \Theta_a(\bp)  \, \om(a,\sigma) = \sum_{a \in \cA} \Theta_a(\bp) \sum_{b \in \cB} \Phi_b(\sigma) \, \om(a,b)
\\
& = & \sum_{b \in \cB} \Phi_b(\sigma) \, \om(\bp,b) =
\om(\bp,\sigma) \subseteq \cC\,,
\end{eqnarray*}
which concludes the proof.
\end{proof}

We consider the strategy described in Figure~\ref{fig:strat}.
\begin{figure}[t]
\rule{\linewidth}{.5pt}{}\\
Approaching Strategy in Games with Partial Monitoring\\
\rule{\linewidth}{.5pt}
{\small
\emph{Parameters}: an integer block length $L \geq 1$, an exploration parameter $\gamma \in [0,1]$,
a strategy $\Psi$ for $\oom$--robust approachability
of~$\cC$ \smallskip \\
\emph{Notation}: $\bu \in \Delta(\cI)$ is the uniform distribution over $\cI$,
$P_\cF$ denotes the projection operator in $\ell^2$--norm of $\R^{\cH \times \cI}$ onto $\cF$ \smallskip \\
\emph{Initialization}: compute the finite set $\cB$ and the mapping $\Phi : \cF \to \Delta(\cB)$
of Lemma~\ref{lm:geocorr}, compute the finite set $\cA$ and the mapping $\Theta : \Delta(\cI) \to \Delta(\cA)$
defined based on Assumption~\ref{As:BPL},
pick an arbitrary $\bth_1 \in \Delta(\cA)$ \medskip \\
\emph{For all blocks} $n = 1,2,\ldots$,
\begin{enumerate}
\item define $\bx_n = \sum_{a \in \cA} \theta_{n,a} \, a$ \ and \ $\bp_n = (1-\gamma) \, \bx_n + \gamma \, \bu$;
\item for rounds $t = (n-1)L+1,\,\ldots,\,nL$,
   \begin{enumerate}
   \item[2.1] draw an action $I_t \in \cI$ at random according to $\bp_n$;
   \item[2.2] get the signal $S_t$;
   \end{enumerate}
\item form the estimated vector of probability distributions over signals,
\[
\displaystyle{
\widetilde{\sigma}_n = \left( \frac{1}{L} \sum_{t=(n-1)L+1}^{nL} \,
\frac{\ind_{ \{ S_t = s \} } \ind_{ \{ I_t = i \} }}{p_{I_t,n}} \right)_{(i,s) \in \cI \times \cH};}
\]
\item compute the projection $\wh{\sigma}_n = P_\cF \bigl( \widetilde{\sigma}_n \bigr)$;
\item choose $\bth_{n+1} = \Psi \Bigl( \bth_1, \, \Phi \bigl( \wh{\sigma}_1 \bigr), \, \ldots, \,
\bth_n, \, \Phi \bigl( \wh{\sigma}_n \bigr) \Bigr).$
\end{enumerate}
}
\rule{\linewidth}{.5pt}
\caption{\label{fig:strat} The proposed strategy, which plays in blocks.}
\end{figure}
It forces exploration at a $\gamma$ rate, as is usual in situations with partial monitoring.
One of its key ingredient, that conditionally unbiased estimators are available, is extracted from Section~6 in the article
by \citemor{LMS08}:
in block $n$ we consider sums of elements of the form
\[
\wh{H}_t = \left( \frac{\ind_{ \{ S_t = s \} } \ind_{ \{ I_t = i \} }}{p_{I_t,n}} \right)_{(i,s) \in \cI \times \cH}
\in \R^{\cH \times \cI};
\]
averaging over the respective random draws of $I_t$ and $S_t$ according to $\bp_n$ and $H(I_t,J_t)$,
i.e., taking the conditional expectation $\E_t$ with respect to $\bp_n$ and $J_t$, we get
\begin{equation}
\label{eq:extracted-source}
\E_t \bigl[ \wh{H}_t \bigr] = \tH \bigl( \delta_{J_t} \bigr).
\end{equation}
Indeed, the conditional expectation of the component $i$ of $\wh{H}_t$
equals
\[
\E_t \! \left[ \left( \frac{\ind_{ \{ S_t = s \} } \ind_{ \{ I_t = i \} }}{p_{I_t,n}} \right)_{\! s \in \cH} \right]
= \E_t \! \left[ \frac{H(I_t,J_t) \, \ind_{ \{ I_t = i \} }}{p_{I_t,n}} \right]
= \frac{H(i,J_t)}{p_{i,n}} \,\, \E_t \bigl[ \ind_{ \{ I_t = i \} } \bigr]
= H(i,J_t)\,,
\]
where we first took the expectation over the random draw of $S_t$ (conditionally to $\bp_n$, $J_t$, and $I_t$)
and then over the one of $I_t$.
Consequently, concentration-of-the-measure arguments can show that for $L$ large enough,
\[
\widetilde{\sigma}_n = \frac{1}{L} \sum_{t=(n-1)L+1}^{nL} \wh{H}_t
\qquad \mbox{is close to} \qquad \tH\bigl( \wh{\bq}_n \bigr)\,,
\qquad \mbox{where} \quad
\wh{\bq}_n = \frac{1}{L} \sum_{t=(n-1)L+1}^{nL} \delta_{J_t}\,.
\]
Actually, since $\cF \subseteq \Delta(\cH)^{\cI}$, we have a natural embedding of
$\cF$ into $\R^{\cH \times \cI}$ and we can define $P_\cF$, the convex projection operator onto~$\cF$ (in $\ell^2$--norm).
Instead of using directly $\widetilde{\sigma}_n$, we consider in our strategy
$\wh{\sigma}_n = P_\cF \bigl( \widetilde{\sigma}_n \bigr)$, which is even closer to $\tH\bigl( \wh{\bq}_n \bigr)$.

More precisely, the following result can be extracted from the proof of Theorem~6.1 in~\citemor{LMS08}. The proof
is provided in Appendix \ref{sec:appLemmaProof}.

\begin{lemma}
\label{lm:extracted}
With probability $1-\delta$,
\[
\norm[ \wh{\sigma}_n - \tH\bigl( \wh{\bq}_n \bigr) ]_2 \leq \sqrt{N_{\cI} N_{\cH}} \left( \sqrt{\frac{2 N_{\cI}}{\gamma L}
\ln \frac{2 N_\cI N_\cH}{\delta}} + \frac{1}{3} \frac{N_\cI}{\gamma L} \ln \frac{2 N_\cI N_\cH}{\delta} \right).
\]
\end{lemma}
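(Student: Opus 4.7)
The plan is to exploit the unbiasedness relation~\eqref{eq:extracted-source} and view the error
$\widetilde{\sigma}_n - \tH(\wh{\bq}_n)$ componentwise as a normalized sum of bounded martingale differences,
then transfer the bound to $\wh{\sigma}_n$ by non-expansiveness of the projection $P_\cF$.

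More precisely, I would fix the block index $n$, write $\cF_{t-1}$ for the sigma-algebra generated by everything
strictly before round $t$, and note that $J_t$ is $\cF_{t-1}$--measurable while $I_t$ is drawn from $\bp_n$
independently of $\cF_{t-1}$ given $\bp_n$. Then the computation already displayed just after
equation~\eqref{eq:extracted-source} gives, for each pair $(i,s) \in \cI \times \cH$,
\[
\E \bigl[ \wh{H}_t^{(i,s)} \,\big|\, \cF_{t-1} \bigr] = H(i,J_t)_s = \tH \bigl( \delta_{J_t} \bigr)_{i,s}\,,
\]
so that $Z_t^{(i,s)} = \wh{H}_t^{(i,s)} - \tH(\delta_{J_t})_{i,s}$ is a martingale difference sequence.
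Since $p_{i,n} \geq \gamma / N_\cI$, the variables $\wh{H}_t^{(i,s)}$ are bounded in $[0,\, N_\cI/\gamma]$, so the
$Z_t^{(i,s)}$ have absolute value at most $N_\cI/\gamma$; moreover, a direct computation yields the
conditional-variance bound
\[
\E \bigl[ \bigl(\wh{H}_t^{(i,s)}\bigr)^2 \,\big|\, \cF_{t-1} \bigr] =
\frac{H(i,J_t)_s}{p_{i,n}} \leq \frac{N_\cI}{\gamma}\,.
\]

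I would then apply a Bernstein/Freedman-type inequality for bounded martingale differences to each of the
$N_\cI N_\cH$ coordinates, using the variance proxy $N_\cI / \gamma$ and the range $N_\cI/\gamma$.
After a union bound over the $N_\cI N_\cH$ components (i.e., replacing the confidence parameter by
$\delta/(N_\cI N_\cH)$), we obtain that with probability at least $1 - \delta$,
\[
\max_{(i,s)} \,\, \Bigl| \widetilde{\sigma}_n^{(i,s)} - \tH(\wh{\bq}_n)_{i,s} \Bigr|
\,\, \leq \,\, \sqrt{\frac{2 N_\cI}{\gamma L} \ln \frac{2 N_\cI N_\cH}{\delta}}
\,\, + \,\, \frac{1}{3}\, \frac{N_\cI}{\gamma L} \ln \frac{2 N_\cI N_\cH}{\delta}\,.
\]
Converting this $\ell^\infty$ bound to an $\ell^2$ bound over the $N_\cI N_\cH$ coordinates costs a factor
$\sqrt{N_\cI N_\cH}$, which matches the prefactor in the statement.

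The final step is to pass from $\widetilde{\sigma}_n$ to $\wh{\sigma}_n = P_\cF(\widetilde{\sigma}_n)$. Since
$\tH(\wh{\bq}_n) \in \cF$ (it is the image under $\tH$ of an element of $\Delta(\cJ)$) and Euclidean
projection onto a closed convex set is $1$--Lipschitz and leaves $\cF$ pointwise fixed, we get
\[
\bnorm[\wh{\sigma}_n - \tH(\wh{\bq}_n)]_2 \,\, \leq \,\, \bnorm[\widetilde{\sigma}_n - \tH(\wh{\bq}_n)]_2\,,
\]
and combining this with the previous display yields the lemma. I expect the main (only) subtlety to be
selecting the right concentration inequality: Hoeffding--Azuma would give the leading $\sqrt{1/(\gamma L)}$ term
but with a prefactor of order $N_\cI/\gamma$ instead of $\sqrt{N_\cI / \gamma}$, so Freedman/Bernstein for
martingales is needed to recover the variance-type rate appearing in the statement.
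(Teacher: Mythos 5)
Your proposal is correct and follows essentially the same route as the paper's proof in Appendix~\ref{sec:appLemmaProof}: a componentwise martingale-difference decomposition based on~\eqref{eq:extracted-source}, the conditional-variance bound $H_s(i,J_t)/p_{i,n} \leq N_\cI/\gamma$ coming from $p_{i,n} \geq \gamma/N_\cI$, the Bernstein--Freedman inequality with a union bound over the $N_\cI N_\cH$ coordinates, the $\sqrt{N_\cI N_\cH}$ factor to pass to the $\ell^2$--norm, and finally the non-expansiveness of $P_\cF$ together with $\tH(\wh{\bq}_n) \in \cF$. The only cosmetic difference is that you fold the rescaling $\delta \mapsto \delta/(N_\cI N_\cH)$ into the union bound up front, whereas the paper states the intermediate bound with $\ln(2/\delta)$ at confidence $1 - N_\cI N_\cH\delta$; your remark that plain Hoeffding--Azuma would not recover the $\sqrt{N_\cI/\gamma}$ variance-type rate is also exactly why the paper invokes Freedman's inequality.
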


\subsubsection{A performance guarantee for the strategy of Figure~\ref{fig:strat}.}
For the sake of simplicity, we provide
first a performance bound for fixed parameters $\gamma$ and $L$ tuned as functions of $T$.
Adaptation to $T \to \infty$ is then described in the next section; note that
it cannot be performed by simply proceeding in regimes, as the approachability guarantees
offered by the second part of the theorem are only at time round $T$. (This is so because the considered
strategy depends on $T$ via the parameters $\gamma$ and $L$.)

\begin{theorem}
\label{TheoGeneralCase}
Consider a closed convex set $\cC$ and
a game $(r,H)$ for which Condition~{\apm}
is satisfied and that is bi-piecewise linear in the
sense of Assumption~\ref{As:BPL}.
Then, for all $T \geq 1$, the strategy of Figure~\ref{fig:strat}, run with
parameters $\gamma \in [0,1]$ and $L \geq 1$
and fed with a strategy $\Psi$ for $\oom$--approachability of $\cC$
(provided by Lemma~\ref{lm:Psi}) is such that, with probability at least
$1 - \delta$,
\begin{multline}
\nonumber
\inf_{c \in \cC} \ \norm[c - \frac{1}{T} \sum_{t=1}^T r(I_t,J_t)]_2 \,\, \leq \ \frac{2L}{T} R +
4R \sqrt{\frac{\ln \bigl( (2T)/ (L\delta) \bigr)}{T}} + 2 \gamma R + \frac{2R}{\sqrt{T/L-1}} \sqrt{N_{\cA} N_{\cB}} \\
+ R \kappa_\Phi \sqrt{N_{\cI} N_{\cH} N_{\cA}} \left( \sqrt{\frac{2 N_{\cI}}{\gamma L}
\ln \frac{2 N_\cI N_\cH T}{L\delta}} + \frac{1}{3} \frac{N_\cI}{\gamma L} \ln \frac{2 N_\cI N_\cH T}{L\delta} \right).
\end{multline}
In particular, for all $T \geq 1$,
the choices of $L = \bigl\lceil T^{3/5} \bigr\rceil$ and $\gamma = T^{-1/5}$ imply that with probability at least $1-\delta$,
\[
\inf_{c \in \cC} \ \norm[c - \frac{1}{T} \sum_{t=1}^T r(I_t,J_t)]_2 \,\, \leq \,\,
\square \left(T^{-1/5}\sqrt{\ln \frac{T}{\delta}} + T^{-2/5} \ln \frac{T}{\delta} \right)
\]
for some constant $\square$ depending only on $\cC$ and on the game $(r,\,H)$ at hand.
\end{theorem}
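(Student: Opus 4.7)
The plan is to compare $\frac{1}{T}\sum_{t=1}^T r(I_t,J_t)$ to the set $\oom(\nu_N) = \frac{1}{N}\sum_{n=1}^N \oom(\bth_n, \Phi(\wh{\sigma}_n))$ produced by running $\Psi$ in the lifted bilinear game, and then invoke the $\oom$--robust approachability guarantee of Lemma~\ref{lm:Psi}, with rate $2R\sqrt{N_\cA N_\cB/N}$ coming from the parenthetical at the end of the sufficiency proof of Theorem~\ref{th:rob-appr}. Writing $T = NL + \rho$ with $0 \leq \rho < L$, the $\rho$ leftover rounds together with the round-off between $N$ and $T/L$ contribute at most $2LR/T$ to the final distance, producing the first term of the bound.

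Within each of the $N$ blocks I would chain five controlled approximations. First, a Hoeffding-Azuma estimate applied to the bounded martingale difference sequence $r(I_t,J_t) - r(\bp_n,\bq_t')$, where $\bq_t' = \delta_{J_t}$, together with a union bound over blocks and over the $d$ coordinates of $\R^d$, shows that $(1/N)\sum_n \wh{r}_n$ lies within the claimed $4R\sqrt{\ln(2T/(L\delta))/T}$ distance from $(1/N)\sum_n r(\bp_n,\wh{\bq}_n)$, where $\wh{r}_n$ and $\wh{\bq}_n$ are the block-wise empirical payoff and frequency. Second, the forced exploration $\bp_n = (1-\gamma)\bx_n + \gamma\bu$ ensures $\norm[r(\bp_n,\wh{\bq}_n) - r(\bx_n,\wh{\bq}_n)]_2 \leq 2\gamma R$. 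Third, by definition of $\om$ and Definition~\ref{def:oom},
\[
r(\bx_n,\wh{\bq}_n) \in \om\bigl(\bx_n,\tH(\wh{\bq}_n)\bigr) = \oom\bigl(\Theta(\bx_n),\,\Phi(\tH(\wh{\bq}_n))\bigr) = \oom\bigl(\bth_n,\,\Phi(\tH(\wh{\bq}_n))\bigr).
\]
Fourth, the Lipschitz constant $\kappa_\Phi$ (Remark~\ref{rk:Lip}) combined with Lemma~\ref{lm:extracted} controls $\norm[\Phi(\tH(\wh{\bq}_n)) - \Phi(\wh{\sigma}_n)]_2$; plugging this into the multilinear extension $\oom$ via Lemma~\ref{lm:DVT} (with bound $R$ on $\oom$) shows that $\oom(\bth_n,\Phi(\tH(\wh{\bq}_n)))$ is included in an $R\kappa_\Phi\sqrt{N_\cA N_\cB}\,\norm[\wh{\sigma}_n - \tH(\wh{\bq}_n)]_2$--neighborhood of $\oom(\bth_n,\Phi(\wh{\sigma}_n))$, which after another union bound over $n$ produces the last composite term. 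Fifth, averaging the inclusions over $n$ and using linearity of $\oom$ shows that some element of $\oom(\nu_N)$ lies close to the block-averaged payoff, and that element is itself within $2R\sqrt{N_\cA N_\cB/N} \leq 2R\sqrt{N_\cA N_\cB/(T/L - 1)}$ of $\cC$ by the $\oom$--approachability guarantee of $\Psi$.

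Collecting all five contributions and the $2LR/T$ truncation yields the displayed bound. The second statement is then a routine substitution of $L = \lceil T^{3/5}\rceil$ and $\gamma = T^{-1/5}$, which balances the $\gamma$ term, the $1/\sqrt{T/L}$ term, and the $\sqrt{(1/\gamma L)\ln(T/\delta)}$ term at the common rate $T^{-1/5}\sqrt{\ln(T/\delta)}$, the remaining $(1/\gamma L)\ln(T/\delta)$ contribution entering as a lower-order $T^{-2/5}\ln(T/\delta)$. The main obstacle I anticipate is the fourth step: threading the estimation error $\norm[\wh{\sigma}_n - \tH(\wh{\bq}_n)]_2$ through the composition $\oom(\bth_n,\Phi(\cdot))$ while carefully handling the $\ell^1$ versus $\ell^2$ conversions between simplices of different dimensions, since the auxiliary game lives in $\Delta(\cA)\times\Delta(\cB)$ whereas $\Phi$ is defined on the embedding of $\cF$ into $\R^{\cH\times\cI}$; Lemma~\ref{lm:DVT} together with Cauchy-Schwarz should make this routine once the correct dimensional factors are tracked.
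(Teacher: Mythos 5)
Your proposal follows essentially the same five-step decomposition as the paper's proof (truncation to $NL$ rounds, block-wise concentration, exploration cost, Lipschitz transfer of the signal-estimation error through $\Phi$ and Lemma~\ref{lm:DVT}, and the $\oom$--robust approachability rate), invoking the same lemmas, so it is correct in structure and yields the stated rates. The only small deviation is in the concentration step: you propose a coordinate-wise Hoeffding--Azuma bound with a union bound over the $d$ coordinates, which introduces a dimension-dependent factor, whereas the paper applies a Hoeffding--Azuma inequality for Hilbert-space-valued martingale differences directly to get the clean constant $4R\sqrt{\ln(2T/(L\delta))/T}$; this only affects the explicit constant in the first display, not the final $T^{-1/5}$ conclusion.
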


The efficiency of the strategy of Figure~\ref{fig:strat} depends on whether it can be fed
with an efficient approachability strategy $\Psi$, which in turn depends on the respective geometries
of $\om$ and $\cC$, as was indicated before the statement of Theorem~\ref{th:rob-appr}.
(Note that the projection onto $\cF$ can be performed in polynomial time, as the latter closed convex set
is defined by finitely many linear constraints, and that the computation of $\cA$, $\cB$, and $\oom$ can be performed
beforehand.) In any case, the per-round complexity is constant (though possibly large).

\begin{proof}
{\allowdisplaybreaks
We write $T$ as $T = NL+k$ where $N$ is an integer and $0 \leq k \leq L-1$
and will show successively that (possibly with overwhelming probability only) the following statements hold.
\begin{eqnarray}
\label{eq:term1}
\frac{1}{T} \sum_{t=1}^T r(I_t,J_t) & \quad \mbox{is close to} \quad &
\frac{1}{NL} \sum_{t=1}^{NL} r(I_t,J_t)\,; \\
\label{eq:term2}
\frac{1}{NL} \sum_{t=1}^{NL} r(I_t,J_t) & \quad \mbox{is close to} \quad &
\frac{1}{N} \sum_{n=1}^{N} r \bigl( \bp_n, \, \wh{\bq}_n \bigr)\,; \\
\label{eq:term3}
\frac{1}{N} \sum_{n=1}^{N} r \bigl( \bp_n, \, \wh{\bq}_n \bigr) & \quad \mbox{is close to} \quad &
\frac{1}{N} \sum_{n=1}^{N} r \bigl( \bx_n, \, \wh{\bq}_n \bigr)\,; \\
\nonumber
\frac{1}{N} \sum_{n=1}^{N} r \bigl( \bx_n, \, \wh{\bq}_n \bigr)
= \frac{1}{N} \sum_{n=1}^{N} \sum_{a \in \cA} \theta_{n,a} \, r \bigl( a, \, \wh{\bq}_n \bigr)
& \quad \mbox{belongs to the set} \quad &
\frac{1}{N} \sum_{n=1}^{N} \sum_{a \in \cA} \theta_{n,a} \,
\om \Bigl( a, \, \tH\bigl( \wh{\bq}_n \bigr) \Bigr)\,; \\
\nonumber
\frac{1}{N} \sum_{n=1}^{N} \sum_{a \in \cA} \theta_{n,a} \,
\om \Bigl( a, \, \tH\bigl( \wh{\bq}_n \bigr) \Bigr)
& \quad \mbox{is equal to the set} \quad &
\frac{1}{N} \sum_{n=1}^{N} \oom \biggl( \bth_n, \, \Phi \Bigl( \tH\bigl( \wh{\bq}_n \bigr) \Bigr) \biggr)\,; \\
\label{eq:term4}
\frac{1}{N} \sum_{n=1}^{N} \oom \biggl( \bth_n, \, \Phi \Bigl( \tH\bigl( \wh{\bq}_n \bigr) \Bigr) \biggr) & \quad \mbox{is close to the set} \quad &
\frac{1}{N} \sum_{n=1}^{N} \oom \Bigl( \bth_n, \, \Phi \bigl( \wh{\sigma}_n \bigr) \Bigr)\,; \\
\label{eq:term5}
\frac{1}{N} \sum_{n=1}^{N} \oom \Bigl( \bth_n, \, \Phi \bigl( \wh{\sigma}_n \bigr) \Bigr) & \quad \mbox{is close to the set} \quad &
\cC\,;
\end{eqnarray}
}
where we recall that the notation $\wh{\bq}_n$ was defined above
and is referring to the empirical distribution of the $J_t$ in the $n$--th block.
Actually, we will show below the numbered statements only. The first unnumbered statement
is immediate by the definition of $\bx_n$, the linearity of $r$, and the very definition of $\om$;
while the second one follows from Definition~\ref{def:oom}:
\[
\frac{1}{N} \sum_{n=1}^{N} \sum_{a \in \cA} \theta_{n,a} \, \om \Bigl( a, \, \tH\bigl( \wh{\bq}_n \bigr) \Bigr)
= \frac{1}{N} \sum_{n=1}^{N} \sum_{(a,b) \in \cA \times \cB} \theta_{n,a} \, \Phi_b\Bigl( \tH\bigl( \wh{\bq}_n \bigr) \Bigr) \,
\om(a,b) \\ = \frac{1}{N} \sum_{n=1}^{N} \oom \biggl( \bth_n, \, \Phi\Bigl( \tH\bigl( \wh{\bq}_n \bigr) \Bigr) \biggr)\,.
\]

\steps{1}{eq:term1} A direct calculation decomposing the sum over $T$ elements
into a sum over the $NL$ first elements and the $k$ remaining ones shows that
\[
\norm[ \frac{1}{T} \sum_{t=1}^T r(I_t,J_t) - \frac{1}{NL} \sum_{t=1}^{NL} r(I_t,J_t) ]_2
\leq R \left( \frac{k}{T} + \left( \frac{1}{NL} - \frac{1}{T} \right) NL \right) = \frac{2k}{T} R \leq \frac{2L}{T} R\,.
\]

\steps{2}{eq:term2} We note that by defining $\E_t$ the conditional expectation
with respect to $(I_1,S_1,J_1)$, $\ldots$, $(I_{t-1},S_{t-1},J_{t-1})$ and $J_t$, which
fixes the values of the distribution $\bp'_t$ of $I_t$ and the value of $J_t$, we have
\[
\E_t \bigl[ r(I_t,J_t) \bigr] = r(\bp'_t,J_t)\,.
\]
We note that by definition of the forecaster, $\bp'_t = \bp_n$ if $t$ belongs to the $n$--th block.
By a version of the Hoeffding-Azuma inequality for sums of Hilbert space-valued
martingale differences stated as\footnote{Together with the fact that
$\sqrt{u} \, e^{-u} \leq e^{- u/2}$ for all $u \geq 0$.} Lemma~3.2 in~\citemor{CW96}, we therefore get
that with probability at least $1-\delta$,
\[
\norm[ \frac{1}{NL} \sum_{t=1}^{NL} r(I_t,J_t) -
\frac{1}{N} \sum_{n=1}^{N} r \bigl( \bp_n, \, \wh{\bq}_n \bigr) ]_2 \leq 4R \sqrt{\frac{\ln (2/\delta)}{T}}\,.
\]

\steps{3}{eq:term3} Since by definition $\bp_n = (1-\gamma) \, \bx_n + \gamma \, \bu$, we
get
\[
\norm[ \frac{1}{N} \sum_{n=1}^{N} r \bigl( \bp_n, \, \wh{\bq}_n \bigr) -
\frac{1}{N} \sum_{n=1}^{N} r \bigl( \bx_n, \, \wh{\bq}_n \bigr)]_2 \leq 2 \gamma R\,.
\]

\steps{4}{eq:term4} We fix a given block $n$. Lemma~\ref{lm:extracted} indicates
that with probability $1-\delta$,
\begin{equation}
\label{eq:extracted}
\norm[ \wh{\sigma}_n - \tH\bigl( \wh{\bq}_n \bigr) ]_2 \leq \sqrt{N_{\cI} N_{\cH}} \left( \sqrt{\frac{2 N_{\cI}}{\gamma L}
\ln \frac{2 N_\cI N_\cH}{\delta}} + \frac{1}{3} \frac{N_\cI}{\gamma L} \ln \frac{2 N_\cI N_\cH}{\delta} \right).
\end{equation}
Since $\Phi$ is Lipschitz (see Remark~\ref{rk:Lip}), with a Lipschitz constant in $\ell^2$--norms
denoted by $\kappa_\Phi$, we get that
with probability $1-\delta$,
\[
\norm[ \Phi \bigl( \wh{\sigma}_n \bigr) -
\Phi \Bigl( \tH\bigl( \wh{\bq}_n \bigr) \Bigr) ]_2 \leq \kappa_\Phi \sqrt{N_{\cI} N_{\cH}} \left( \sqrt{\frac{2 N_{\cI}}{\gamma L}
\ln \frac{2 N_\cI N_\cH}{\delta}} + \frac{1}{3} \frac{N_\cI}{\gamma L} \ln \frac{2 N_\cI N_\cH}{\delta} \right).
\]
By a union bound, the above bound holds for all blocks $n = 1,\ldots,N$ with probability at least $1 - N\delta$.
Finally, an application of Lemma~\ref{lm:DVT} shows that
\[
\frac{1}{N} \sum_{n=1}^{N} \oom \biggl( \bth_n, \, \Phi \Bigl( \tH\bigl( \wh{\bq}_n \bigr) \Bigr) \biggr)
\qquad \mbox{is in a $\epsilon_T$--neighborhood (in $\ell^2$--norm) of} \qquad
\frac{1}{N} \sum_{n=1}^{N} \oom \Bigl( \bth_n, \, \Phi \bigl( \wh{\sigma}_n \bigr) \Bigr)\,,
\]
where
\[
\epsilon_T = R\sqrt{N_\cB} \,\, \times \,\, \kappa_\Phi \sqrt{N_{\cI} N_{\cH}} \left( \sqrt{\frac{2 N_{\cI}}{\gamma L}
\ln \frac{2 N_\cI N_\cH}{\delta}} + \frac{1}{3} \frac{N_\cI}{\gamma L} \ln \frac{2 N_\cI N_\cH}{\delta} \right).
\]

\steps{5}{eq:term5} Since $\cC$ is $\oom$--robust approachable and by definition of the choices of the $\bth_n$
in Figure~\ref{fig:strat}, we get by (the proof of the sufficiency part of) Theorem~\ref{th:rob-appr}, with probability 1,
\[
\inf_{c \in \cC} \ \norm[c - \frac{1}{N} \sum_{n=1}^{N} \oom \Bigl( \bth_n, \, \Phi \bigl( \wh{\sigma}_n \bigr) \Bigr)]_2 \leq
\frac{2R}{\sqrt{N}} \sqrt{N_{\cA} N_{\cB}} \leq \frac{2R}{\sqrt{T/L-1}} \sqrt{N_{\cA} N_{\cB}}\,,
\]
since $T/L \leq N + k/L \leq N+1$.

\textbf{Conclusion of the proof.}
The proof is concluded by putting the pieces together, thanks to a triangle inequality and
by considering $L\delta/T \leq \delta/(N+1)$ instead of $\delta$.
\end{proof}

\subsubsection{Uniform guarantees over time for a time-adaptive version of the strategy of Figure~\ref{fig:strat}.}
We present here a variant of the strategy of Figure~\ref{fig:strat} for which
the lengths $L_n$ of blocks $n$ and the exploration rates $\gamma_n$ are no longer constant.
To do so, we need the following generalization of Theorem~\ref{th:Bla} to polynomial averages;
this result is of independent interest. We only state the result for mixed actions taken and observed, but the
generalization for pure actions follows easily.


Consider the setting of Theorem~\ref{th:Bla}.
The studied strategy relies on a parameter $\alpha \geq 0$. It plays an arbitrary $\bx_1$.
For $t \geq 1$, it forms at stage $t+1$ the vector-valued polynomial average
\[
\hat{m}^{\alpha}_t = \frac{1}{T_t^\alpha} \, \sum_{s=1}^t s^\alpha \, m(\bx_s, \by_s)
\qquad \quad \mbox{where} \qquad T_t^\alpha= \sum_{s=1}^t s^\alpha\,,
\]
computes its projection $c^{\alpha}_t$ onto $\cC$,
and resorts to a mixed action $\bx_{t+1}$ solving the minimax equation
\[
\min_{\bx \in \Delta(\cA)} \, \max_{\by \in \Delta(\cB)} \,\, \bigl\langle \hat{m}^{\alpha}_t - c^{\alpha}_t , \, m(\bx,\by) \bigr\rangle\,.
\]

\begin{theorem}\label{th:BlaAlpha}
We denote by $M$ a bound in norm over $m$, i.e.,
\[
\max_{(a,b) \in \cA \times \cB} \bnorm[m(a,b)]_2 \leq M\,.
\]
For all $\alpha\geq 0$, when $\cC$ is an approachable closed convex set,
the above strategy ensures that
for all strategies of the second player, with probability 1,
for all $T \geq 1$,
\begin{equation}
\label{eq:polyweightedappr}
\inf_{c \in \cC} \ \norm[c - \frac{1}{\sum_{t=1}^T t^\alpha} \sum_{t=1}^T t^\alpha \, m( \bx_t, \by_t)]_2 \ \leq 2M \frac{\sqrt{\sum_{t=1}^T t^{2\alpha}}}{\sum_{t=1}^T t^{\alpha}} \leq  \frac{2M K_{\alpha}}{\sqrt{T}}\,,
\end{equation}
where $K_{\alpha}$ is a constant depending only $\alpha$.
\end{theorem}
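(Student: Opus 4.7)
The plan is to adapt Blackwell's one-step argument, exploiting the fact that the polynomially-weighted average retains a one-step convex-combination form. Setting $\lambda_{t+1} = (t+1)^\alpha/T_{t+1}^\alpha$, so that $1 - \lambda_{t+1} = T_t^\alpha/T_{t+1}^\alpha$, one has
\[
\hat m^\alpha_{t+1} \,=\, (1 - \lambda_{t+1})\, \hat m^\alpha_t \,+\, \lambda_{t+1}\, m(\bx_{t+1}, \by_{t+1})\,,
\]
which is exactly the structure that Blackwell's proof of Theorem~\ref{th:Bla} exploits in the case $\alpha = 0$ (where $\lambda_{t+1} = 1/(t+1)$).

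Let $c^\alpha_t$ denote the $\ell^2$-projection of $\hat m^\alpha_t$ onto $\cC$ and set $d_t = \bnorm[\hat m^\alpha_t - c^\alpha_t]_2^2$. The minimax equation defining $\bx_{t+1}$, combined with the approachability criterion of Theorem~\ref{th:appr} and the projection inequality $\bigl\langle \hat m^\alpha_t - c^\alpha_t,\, c - c^\alpha_t \bigr\rangle \leq 0$ valid for every $c \in \cC$, yields the key Blackwell inequality
\[
\bigl\langle m(\bx_{t+1}, \by_{t+1}) - \hat m^\alpha_t,\, \hat m^\alpha_t - c^\alpha_t \bigr\rangle \,\leq\, - d_t\,.
\]
Expanding $\bnorm[\hat m^\alpha_{t+1} - c^\alpha_t]_2^2$ via the convex-combination identity, applying this inequality, and using $\bnorm[m(\bx_{t+1}, \by_{t+1}) - \hat m^\alpha_t]_2 \leq 2M$ (both vectors lie in the ball of radius $M$, as convex combinations of payoff vectors) gives the one-step recursion $d_{t+1} \leq (1 - 2\lambda_{t+1}) \, d_t + 4M^2 \, \lambda_{t+1}^2$.

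Multiplying through by $\bigl(T_{t+1}^\alpha\bigr)^2$ and invoking the elementary identity $\bigl(T_{t+1}^\alpha\bigr)^2 (1 - 2\lambda_{t+1}) = \bigl(T_t^\alpha\bigr)^2 - (t+1)^{2\alpha}$, which is immediate from $T_{t+1}^\alpha - T_t^\alpha = (t+1)^\alpha$, yields the clean telescoping inequality $\bigl(T_{t+1}^\alpha\bigr)^2 d_{t+1} \leq \bigl(T_t^\alpha\bigr)^2 d_t + 4M^2 (t+1)^{2\alpha}$. Iterating from $t = 0$ produces $\bigl(T_T^\alpha\bigr)^2 d_T \leq 4M^2 \sum_{t=1}^T t^{2\alpha}$, and taking square roots gives the first inequality in~\eqref{eq:polyweightedappr}. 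The second inequality then follows from the standard asymptotics $\sum_{t=1}^T t^\alpha \sim T^{\alpha+1}/(\alpha+1)$ and $\sum_{t=1}^T t^{2\alpha} \sim T^{2\alpha+1}/(2\alpha+1)$, which ensure that $\sqrt T \cdot \sqrt{\sum_{t=1}^T t^{2\alpha}}\big/\sum_{t=1}^T t^\alpha$ admits a finite supremum $K_\alpha$ over $T \geq 1$, depending only on $\alpha$ (with asymptotic value $(\alpha+1)/\sqrt{2\alpha+1}$).

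The proof is essentially routine once the convex-combination form of the polynomial average and the algebraic identity for $\bigl(T_{t+1}^\alpha\bigr)^2 (1 - 2\lambda_{t+1})$ are recognized; no step presents a genuine obstacle. The only mild care lies in extracting a uniform constant $K_\alpha$ (rather than merely an asymptotic one) from the partial-sum estimates, which is immediate from the continuity of the relevant ratio in $T$ together with its finite limit. As a sanity check, setting $\alpha = 0$ collapses the telescoping exactly to the standard one and recovers the $2M/\sqrt{T}$ bound of Theorem~\ref{th:Bla}.
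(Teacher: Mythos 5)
Your proof is correct and follows essentially the same route as the paper: the same weighted convex-combination form of $\hat m^\alpha_{t+1}$, the same Blackwell projection/minimax inequality, and the same one-step recursion $d_{t+1}\leq(1-2\lambda_{t+1})d_t+4M^2\lambda_{t+1}^2$; your exact identity $\bigl(T_{t+1}^\alpha\bigr)^2(1-2\lambda_{t+1})=\bigl(T_t^\alpha\bigr)^2-(t+1)^{2\alpha}$ and subsequent telescoping is just a (slightly cleaner) repackaging of the paper's induction step. The only cosmetic difference is that the paper extracts an explicit constant $K_\alpha=\frac{\alpha+1}{\sqrt{2\alpha+1}}\sqrt{2^{\alpha+1}}$ by comparing sums with integrals, whereas you only argue that the relevant ratio is bounded, which still suffices for the statement as given.
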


It is interesting to note that the convergence rate are independent of $\alpha$ and are the same as standard approachability ($1/\sqrt{T}$).

\begin{proof}
The proof is a slight modification of the one of Theorem~\ref{th:Bla}.
We denote by $d_t^\alpha$ the squared distance of $\hat{m}_t^\alpha$ to $\cC$,
\[
d_t^\alpha = \inf_{c\in\cC} \bnorm[c - \hat{m}_t^\alpha]^2 = \bnorm[c^{\alpha}_t - \hat{m}_t^\alpha]^2
\]
and use the shortcut notation $m_{t}=m(\bx_{t},\by_{t})$ for all $t \geq 1$. Then,
\begin{align*}
d_{t+1}^\alpha & \leq \bnorm[ \hat{m}_{t+1}^\alpha- c_t^\alpha  ]^2=\norm[ \hat{m}_t^\alpha - c_t^\alpha  + \frac{(t+1)^\alpha}{T_{t+1}^\alpha}\Big(m_{t+1}-\hat{m}_t^\alpha\Big)]^2\\
& \leq \bnorm[ \hat{m}_t^\alpha-c_t^\alpha ]^2+\frac{2(t+1)^\alpha}{T_{t+1}^\alpha}
\bigl\langle \hat{m}_t^\alpha - c_t^\alpha, \, m_{t+1}-m_t^\alpha \bigr\rangle
+ \left(\frac{(t+1)^\alpha}{T_{t+1}^\alpha}\right)^2 \bnorm[m_{t+1}-\hat{m}_t^\alpha]^2 \\
& \leq d_t^\alpha + \frac{2(t+1)^\alpha}{T_{t+1}^\alpha}
\underbrace{\bigl\langle \hat{m}_t^\alpha - c_t^\alpha, \, m_{t+1} - c_t^\alpha \bigr\rangle}_{\leq 0}
+ \frac{2(t+1)^\alpha}{T_{t+1}^\alpha}
\bigl\langle \hat{m}_t^\alpha - c_t^\alpha, \, c_t^\alpha-m_t^\alpha \bigr\rangle
+ \left(\frac{(t+1)^\alpha}{T_{t+1}^\alpha}\right)^2 4M^2 \\
& \leq d_t^\alpha\left(1-\frac{2(t+1)^\alpha}{T_{t+1}^\alpha}\right) + \left(\frac{(t+1)^\alpha}{T_{t+1}^\alpha}\right)^2 4M^2,
\end{align*}
where we used in the third inequality the same convex projection inequality as in the proof of Theorem~\ref{th:Bla}.

The first inequality in (\ref{eq:polyweightedappr}) then follows by induction: the bound $2M$ for $t=1$ is by boundedness of $m$.
If the stated bound holds for $d_t^\alpha$, then
\[
d_{t+1}^\alpha \leq \left( 2M \frac{\sqrt{\sum_{s=1}^t s^{2\alpha}}}{\sum_{s=1}^t s^{\alpha}} \right)^{\!\! 2}
\left(1-\frac{2(t+1)^\alpha}{T_{t+1}^\alpha}\right) +
\left(\frac{(t+1)^\alpha}{T_{t+1}^\alpha}\right)^2 4M^2
\leq 4M^2 \, \frac{\sum_{s=1}^{t+1} s^{2\alpha}}{\bigl( T_{t+1}^\alpha \bigr)^2}\,,
\]
as desired, since
\[
\frac{1}{\bigl( T_t^\alpha \bigr)^2} \left(1-\frac{2(t+1)^\alpha}{T_{t+1}^\alpha}\right)
= \frac{1}{T_{t+1}^\alpha \, \bigl( T_t^\alpha \bigr)^2} \bigl( T_t^\alpha - (t+1)^\alpha \bigr)
\leq \frac{1}{T_{t+1}^\alpha \, \bigl( T_t^\alpha \bigr)^2} \, \frac{\bigl( T_t^\alpha \bigr)^2 - (t+1)^{2\alpha}}{T_t^\alpha
+ (t+1)^\alpha} \leq \frac{1}{\bigl( T_{t+1}^\alpha \bigr)^2}\,.
\]

The second inequality  in (\ref{eq:polyweightedappr})  can be proved as follows. First, for all $\alpha \geq 0$, by comparing sums and integrals, we get
that for all $t \geq 1$,
\[
\frac{t^{\alpha+1}}{\alpha + 1} = \int_0^t s^\alpha \,\mbox{d}s
\leq \sum_{s=1}^t s^\alpha \leq \int_1^{t+1} s^\alpha \,\mbox{d}s \leq
\frac{(t+1)^{\alpha+1}}{\alpha + 1} \leq \frac{(2t)^{\alpha+1}}{\alpha + 1}\,.
\]
Therefore,
\[
\frac{\sqrt{\sum_{s=1}^t s^{2\alpha}}}{\sum_{s=1}^t s^{\alpha}}
\leq \frac{\alpha + 1}{\sqrt{2\alpha +1}} \, \frac{\sqrt{(2t)^{\alpha+1}}}{t^{\alpha+1}}
= K_\alpha \frac{1}{\sqrt{t}}
\]
for
\[
K_\alpha = \frac{\alpha + 1}{\sqrt{2\alpha +1}} \, \sqrt{2^{\alpha + 1}}\,.
\]
This concludes the proof.
\end{proof}

The extension to polynomially weighted averages can also be obtained in the context of robust approachability
as the key to Theorem~\ref{th:rob-appr} is Lemma~\ref{lm:apprtorobappr}, which indicates that to get
robust approachability, it suffices to approach, in the usual sense, $\widetilde{\cC}$; both can thus be performed with polynomially
weighted averages.

Consider now the variant of the strategy of Figure~\ref{fig:strat} for which
the length of the $n$-th block, denoted by $L_n$, is equal to $n^{\alpha}$,
the exploration rate on this block comes at a rate $\gamma_n = n^{-\alpha/3}$
and $\Psi$ is an $\oom$-robust approachability strategy of $\cC$ with respect to polynomially weighted
averages with parameter $\alpha = 3/2$. We call it a time-adaptive version of this strategy;
note that it does not depend anymore on any time horizon $T$, hence guarantees can be obtained
for all $T$.

\begin{theorem}
\label{TheoGeneralCaseVariant}
The time-adaptive version of the strategy described in
Figure~\ref{fig:strat} (with $L_n=n^{\alpha}$ and $\gamma_n=n^{-\alpha/3}$ for
$\alpha=3/2$) ensures that, for all $T \geq 1$, with probability at least $1-\delta$,
\[
\inf_{c \in \cC} \ \norm[c - \frac{1}{T} \sum_{t=1}^T r(I_t,J_t)]_2 \,\, \leq \,\,
\square \left(T^{-1/5}\sqrt{\ln \frac{T}{\delta}} + T^{-2/5} \ln \frac{T}{\delta} \right)
\]
for some constant $\square$ depending only on $\cC$ and  the game $(r,\,H)$ at hand.
\end{theorem}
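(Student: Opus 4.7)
The plan is to adapt the five-step decomposition of the proof of Theorem~\ref{TheoGeneralCase} to the time-varying block lengths $L_n = n^{3/2}$ and exploration rates $\gamma_n = n^{-1/2}$, and to replace Step~5 by the polynomially weighted analogue of Theorem~\ref{th:rob-appr} (as indicated in the paragraph following the proof of Theorem~\ref{th:BlaAlpha}), with weight parameter $\alpha = 3/2$ matched to the block length exponent. The key identity is that $\frac{1}{T}\sum_{t=1}^T r(I_t,J_t)$ can be written, up to a terminal-block remainder, as the polynomial average with weights $L_n = n^{3/2}$ of the per-block averages $\frac{1}{L_n}\sum_{t \in \text{block }n} r(I_t,J_t)$; this is precisely why polynomial-weight approachability with $\alpha = 3/2$ is the right tool for the final step.

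I would then bound each error term as in the proof of Theorem~\ref{TheoGeneralCase}, now with block-dependent parameters. Writing $T = \sum_{n=1}^N L_n + k$ with $0 \leq k < L_{N+1}$ gives $N \asymp T^{2/5}$. The terminal remainder (Step~1) is $O(L_{N+1}/T) = O(T^{-2/5})$. The martingale step (Step~2) still produces a rate of $O(R\sqrt{\ln(1/\delta)/T})$ through the same Hilbert-space Hoeffding-Azuma inequality of~\citemor{CW96}. The exploration cost (Step~3) is $\frac{R}{T}\sum_{n=1}^N L_n \gamma_n = \frac{R}{T}\sum_n n = O(N^2/T) = O(T^{-1/5})$. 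For the signal-estimation step (Step~4), Lemma~\ref{lm:extracted} applied on each block at confidence level $\delta/n^2$ (so that a union bound over $n$ preserves the global $1-\delta$ level) produces a per-block error of order $(\gamma_n L_n)^{-1/2} = n^{-1/2}$, up to logarithmic factors; averaged with weights $L_n/T$ this contributes $\frac{1}{T}\sum_n L_n \cdot n^{-1/2} = \frac{1}{T}\sum_n n = O(T^{-1/5})$, and transports to the $\oom$-coordinates via the Lipschitz property of $\Phi$ (Remark~\ref{rk:Lip}). Finally, the polynomial-weight version of the $\oom$-robust approachability guarantee (Step~5), applied with $\alpha = 3/2$, delivers $O(\sqrt{N_\cA N_\cB}/\sqrt{N}) = O(T^{-1/5})$. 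Summing these, with the worst-case logarithmic factors, yields the advertised bound.

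The main obstacle will be promoting an at-time-$T$ bound into the uniform-in-$T$ guarantee required by the statement, since it claims a single probability-$\delta$ event valid simultaneously for all $T \geq 1$. For the approachability step this is essentially free, because the polynomial-weight version of Theorem~\ref{th:rob-appr}, derived as indicated after Theorem~\ref{th:BlaAlpha}, is almost surely valid for every $T$ (it inherits the ``for all $T \geq 1$'' guarantee from Theorem~\ref{th:BlaAlpha} via Lemma~\ref{lm:apprtorobappr}). For the Hoeffding-Azuma and signal-estimation steps, one would proceed by summable confidence allocations $\delta_n = \delta/n^2$ across blocks (for Step~4) combined with a standard between-block monotonicity or Freedman-style time-uniform Azuma inequality (for Step~2) to handle non-block-boundary times.

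A secondary but non-trivial point is to verify that the polynomially weighted extension of Lemma~\ref{lm:apprtorobappr} preserves the $\sqrt{N_\cA N_\cB}$ constant: this is the case because the proof of Theorem~\ref{th:rob-appr} only uses the convexity-based Lemma~\ref{lm:DVT} (insensitive to the weighting scheme) together with the projection-based approachability of $\widetilde{\cC}$, and both are compatible with the polynomial weights used in Theorem~\ref{th:BlaAlpha}. With this in hand, the five bounds combine via a triangle inequality exactly as at the end of the proof of Theorem~\ref{TheoGeneralCase}, and the constant $\square$ is absorbed into the $N_\cI$, $N_\cH$, $N_\cA$, $N_\cB$, $R$, and $\kappa_\Phi$ dependencies.
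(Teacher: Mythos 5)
Your proposal follows essentially the same route as the paper's proof: the same five-step decomposition with $T = \sum_{n\leq N} L_n + k$, the same per-block application of Lemma~\ref{lm:extracted} with a union bound and Lipschitz transport via $\Phi$, and the same replacement of Step~5 by the polynomially weighted approachability guarantee of Theorem~\ref{th:BlaAlpha} with $\alpha = 3/2$ matched to the block lengths; all your exponent computations ($N \asymp T^{2/5}$, terminal block $T^{-2/5}$, exploration and estimation $T^{-1/5}$, approachability $N^{-1/2} \asymp T^{-1/5}$) agree with the paper's. The only divergence is that you treat the bound as needing to hold on a single event simultaneously for all $T$, whereas the paper (consistently with its stated conventions and its proof, which fixes $T$ and union-bounds over the $N$ blocks) proves a per-$T$ guarantee for an anytime strategy, so your extra uniformization machinery is harmless but not required.
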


\begin{proof}
The proof follows closely the one of Theorem~\ref{TheoGeneralCase}.
We choose $N$ so as to write $T = T_N^{\alpha} + k$ where $0\leq k \leq L_{N+1}-1$. We adapt step~1 as follows,
\[
\norm[ \frac{1}{T} \sum_{t=1}^T r(I_t,J_t) - \frac{1}{T_N^{\alpha}} \sum_{t=1}^{T_N^\alpha} r(I_t,J_t) ]_2
\leq R \left( \frac{k}{T} + \left( \frac{1}{T_N^\alpha} - \frac{1}{T} \right) T_N^\alpha \right) = \frac{2k}{T} R \leq \frac{2L_{N+1}}{T} R\,.
\]
Second, as in step~2, we resort again to the Hoeffding-Azuma inequality for sums of Hilbert space-valued
martingale differences; with probability at least $1-\delta$,
\[
\norm[ \frac{1}{T^\alpha_N} \sum_{t=1}^{T_N^\alpha} r(I_t,J_t) -
\frac{1}{T_N^\alpha} \sum_{n=1}^{N} n^\alpha \, r \bigl( \bp_n, \, \wh{\bq}_n \bigr) ]_2
\leq 4R \sqrt{\frac{\ln (2/\delta)}{T_N^\alpha}} \leq 4R \sqrt{\frac{\ln (2/\delta)}{T}} \,.
\]
In view of the choice $\gamma_n = n^{-\alpha/3}$, step~3 translates here to
\[
\norm[ \frac{1}{T_N^\alpha} \sum_{n=1}^{N} n^\alpha \, r\bigl( \bp_n, \, \wh{\bq}_n \bigr) -
\frac{1}{T_N^\alpha} \sum_{n=1}^{N} n^\alpha \, r\bigl( \bx_n, \, \wh{\bq}_n \bigr)]_2
\leq 2R \, \frac{\sum_{n=1}^{N} n^\alpha \gamma_n }{T_N^\alpha}
= 2R \, \frac{\sum_{n=1}^{N} n^{2\alpha/3}}{T_N^\alpha}
= 2 R \frac{T_N^{(2\alpha/3)}}{T_N^\alpha}\,.
\]
The same argument as the one at the beginning of the proof of Theorem~\ref{TheoGeneralCase}
shows that
\[
\frac{1}{T_N^\alpha} \sum_{n=1}^{N} n^\alpha \, r\bigl( \bx_n, \, \wh{\bq}_n \bigr)
\,\, \in \,\,
\frac{1}{T_N^\alpha} \sum_{n=1}^{N} n^\alpha \, \oom \biggl( \bth_n, \, \Phi \Bigl( \tH\bigl( \wh{\bq}_n \bigr) \Bigr) \biggr).
\]
Step~4 starts also by an application of Lemma~\ref{lm:extracted} together with the Lipschitzness of $\Phi$
to get that for all regimes $n = 1,\ldots,N$,
with probability at least $1-\delta$,
\[
\norm[ \Phi \bigl( \wh{\sigma}_n \bigr) -
\Phi \Bigl( \tH\bigl( \wh{\bq}_n \bigr) \Bigr) ]_2 \leq \kappa_\Phi \sqrt{N_{\cI} N_{\cH}} \left( \sqrt{\frac{2 N_{\cI}}{\gamma_n L_n}
\ln \frac{2 N_\cI N_\cH}{\delta}} + \frac{1}{3} \frac{N_\cI}{\gamma_n L_n} \ln \frac{2 N_\cI N_\cH}{\delta} \right).
\]
By a union bound, the above bound holds for all regimes $n = 1,\ldots,N$ with probability at least $1 - N\delta$.
Then, an application of Lemma~\ref{lm:DVT} shows that
\[
\frac{1}{T_N^\alpha} \sum_{n=1}^{N} n^\alpha \, \oom \biggl( \bth_n, \, \Phi \Bigl( \tH\bigl( \wh{\bq}_n \bigr) \Bigr) \biggr)
\qquad \mbox{is in a $\epsilon_N$--neighborhood of} \qquad
\frac{1}{T_N^\alpha} \sum_{n=1}^{N} n^\alpha \, \oom \Bigl( \bth_n, \, \Phi \bigl( \wh{\sigma}_n \bigr) \Bigr)\,,
\]
where, substituting the values of $L_n = n^\alpha$ and $\gamma_n = n^{-\alpha/3}$,
\begin{align*}
\epsilon_N  & = R\sqrt{N_\cB} \,\, \times \,\, \kappa_\Phi \sqrt{N_{\cI} N_{\cH}} \,\, \frac{1}{T_N^\alpha} \sum_{n=1}^N n^\alpha \left( \sqrt{\frac{2 N_{\cI}}{\gamma_n L_n}
\ln \frac{2 N_\cI N_\cH}{\delta}} + \frac{1}{3} \frac{N_\cI}{\gamma_n L_n} \ln \frac{2 N_\cI N_\cH}{\delta} \right)\\
&  = R\sqrt{N_\cB} \,\, \times \,\, \kappa_\Phi \sqrt{N_{\cI} N_{\cH}} \left( \frac{T_N^{(2\alpha/3)}}{T_N^\alpha}\sqrt{2 N_{\cI}\ln \frac{2 N_\cI N_\cH}{\delta}} + \frac{T_N^{(\alpha/3)}}{T_N^\alpha} \, \frac{N_\cI}{3} \ln \frac{2 N_\cI N_\cH}{\delta} \right).
\end{align*}
It then suffices, as in step~5 of the original proof, to write the convergence rates for robust approachability guaranteed by
the strategy $\Psi$. By combining the result of Lemma~\ref{lm:apprtorobappr} with Theorem~\ref{th:BlaAlpha}
and Lemma~\ref{lm:DVT}, we get
\[
\inf_{c \in \cC} \ \norm[c - \frac{1}{T_n^\alpha}  \sum_{n=1}^{N} n^\alpha
\, \oom \Bigl( \bth_n, \, \Phi \bigl( \wh{\sigma}_n \bigr) \Bigr)]_2 \leq
\frac{2 R \, K_\alpha}{\sqrt{N}} \sqrt{N_{\cA} N_{\cB}}\,.
\]
Putting all things together and applying a union bound, we obtain that with probability at least $1-\delta$,
\begin{multline}
\nonumber
\qquad \inf_{c \in \cC} \ \norm[c - \frac{1}{T} \sum_{t=1}^T r(I_t,J_t)]_2 \\
= O \! \left( \frac{(N+1)^{\alpha}}{T}+\sqrt{\frac{\ln(N/\delta)}{T_n^\alpha}}+\frac{T_N^{(2\alpha/3)}}{T_N^\alpha}
+ \frac{T_N^{(2\alpha/3)}}{T_N^\alpha}
\sqrt{\ln \frac{N}{\delta}} + \frac{T_N^{(\alpha/3)}}{T_N^\alpha}\ln \frac{N}{\delta} + \frac{1}{\sqrt{N}} \right).
\end{multline}
Since (as proved at the end of Theorem~\ref{th:BlaAlpha}) $T_N^\beta \sim N^{\beta+1}/(\beta+1)$ for all $\beta \geq 0$,
we get that
\[
N \sim \bigl( (\alpha+1) T \bigr)^{1/(\alpha + 1)} \qquad \mbox{and}
\qquad T_N^\beta \sim \frac{N^{\beta+1}}{\beta+1} \sim \kappa_{\alpha,\beta} \, T^{(\beta+1)/(\alpha+1)}\,,
\]
where $\kappa_{\alpha,\beta}$ is a constant that only depends on $\alpha$ and $\beta$.
Choosing $\alpha = 3/2$ and substituting these equivalences ensures the result.
\end{proof}

\subsection{Application to regret minimization.}
\label{se:apptoregretmin}

In this section we analyze external and internal regret minimization in repeated games with partial monitoring from the approachability perspective.
We show how to---in particular---efficiently
minimize regret in both setups using the results developed for vector-valued games with partial monitoring;
to do so, we indicate why the assumption of bi-piecewise linearity (Assumption~\ref{As:BPL}) is satisfied.

\subsubsection{External regret.}
\label{sec:ER}

We consider in this section the framework and aim introduced by \citemor{Rus99} and studied, sometimes in special
cases, by \citemor{PiSc01}, \citemor{MaSh03}, \citemor{CeLuSt06}, \citemor{LMS08}. We show that our general strategy can be used for regret minimization.

Scalar payoffs are obtained (but not observed) by the first player, i.e., $d=1$:
the payoff function $r$ is a mapping $\cI \times \cJ \to \R$; we still denote by $R$ a bound on $|r|$.
We define in this section
\[
\wh{\bq}_T = \frac{1}{T} \sum_{t=1}^T \delta_{J_T}
\]
as the empirical distribution of the actions taken by the second player during the first $T$ rounds.
(This is in contrast with the notation $\wh{\bq}_T$ used in the previous section
to denote such an empirical distribution, but only taken within regime $n$.)

The external regret of the first
player at round $T$ equals by definition
\[
R^{\ext}_T = \max_{\bp \in \Delta(\cI)} \,\, \rho \Bigl( \bp, \tH \bigl( \wh{\bq}_T \bigr) \Bigr) -
\frac{1}{T} \sum_{t=1}^T r(I_t,J_t)\,,
\]
where $\rho : \Delta(\cI) \times \cF$ is defined as follows: for all $\bp \in \Delta(\cI)$ and $\sigma \in \cF$,
\[
\rho(\bp,\sigma) = \min\left\{ r(\bp,\bq)\, :  \, \bq \ \mbox{such that} \ \tH(\bq)= \sigma \right\}\,.
\]
The function $\rho$ is  continuous in its first argument and therefore the supremum
in the defining expression of $R^{\ext}_T$ is a maximum.

We recall briefly why, intuitively, this is the natural notion of external regret to consider in this case.
Indeed, the first term in the definition of $R^{\ext}_T$ is (close to) the
worst-case average payoff obtained by the first player when playing consistently a mixed
action $\bp$ against a sequence of mixed actions inducing on average the same laws on the signals
as the sequence of actions actually played.

The following result is an easy consequence of Theorem~\ref{TheoGeneralCaseVariant}, as is explained below;
it corresponds to the main result of \citemor{LMS08}, with the same convergence rate but
with a different strategy. (However, Section~2.3 of \citemor{Per11JMLR} exhibited an efficient strategy
achieving a convergence rate of order $T^{-1/3}$, which is optimal; a question that remains open is thus whether
the rates exhibited in Theorem~\ref{TheoGeneralCaseVariant} could be improved.)

\begin{corollary}
\label{cor:ER}
The first player has a
strategy such that for all $T$ and  all strategies of the second player, with probability
at least $1-\delta$,
\[
R^{\ext}_T \,\, \leq \,\,
\square \left(T^{-1/5}\sqrt{\ln \frac{T}{\delta}} + T^{-2/5} \ln \frac{T}{\delta} \right)
\]
for some constant $\square$ depending only on the game $(r,\,H)$ at hand.
\end{corollary}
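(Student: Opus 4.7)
The plan is to recast external-regret minimization as an instance of vector-valued approachability with partial monitoring and then invoke Theorem~\ref{TheoGeneralCaseVariant}. I would introduce the auxiliary $(N_\cI N_\cH + 1)$--dimensional payoff function
\[
r^{\ext} : \cI \times \cJ \longrightarrow \R^{N_\cI N_\cH + 1}, \qquad
r^{\ext}(i,j) = \bigl( \tH(\delta_j), \, r(i,j) \bigr),
\]
keeping the same signaling structure $H$, and take as target set the super-epigraph
\[
\cC^{\ext} = \Bigl\{ (\sigma, v) \in \R^{N_\cI N_\cH} \times \R : \ \max_{\bp \in \Delta(\cI)} \rho(\bp, \sigma) \, \leq v \Bigr\}.
\]
This set is convex and closed: $\sigma \mapsto \rho(\bp,\sigma)$ is the value function of a parametric linear program (minimize the linear objective $r(\bp,\bq)$ subject to $\tH(\bq)=\sigma$ and $\bq \in \Delta(\cJ)$), hence convex and piecewise linear on $\cF$, and a pointwise supremum over $\bp$ of convex functions is convex. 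Averaging $r^{\ext}$ over $T$ rounds yields exactly $\bigl( \tH(\wh{\bq}_T), \, T^{-1}\sum_t r(I_t,J_t) \bigr)$, so $\ell^2$--proximity to $\cC^{\ext}$ translates into an upper bound on $\max_\bp \rho(\bp, \tH(\wh{\bq}_T)) - T^{-1}\sum_t r(I_t,J_t)$, up to a multiplicative factor equal to one plus the Lipschitz constant of $\sigma \mapsto \max_\bp \rho(\bp, \sigma)$ on $\cF$.

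Second, I would verify the two hypotheses needed to apply Theorem~\ref{TheoGeneralCaseVariant} to the auxiliary game $(r^{\ext}, H)$ and the set $\cC^{\ext}$. Condition~{\apm} follows from choosing, for each $\sigma \in \cF$, any $\bp^*(\sigma) \in \arg\max_{\bp} \rho(\bp, \sigma)$: for every $\bq' \in \tH^{-1}(\sigma)$ one has $r(\bp^*, \bq') \geq \rho(\bp^*, \sigma) = \max_\bp \rho(\bp, \sigma)$, whence $\om^{\ext}(\bp^*, \sigma) \subseteq \cC^{\ext}$. Bi-piecewise linearity (Assumption~\ref{As:BPL}) is trivial here: the set-valued payoff
\[
\om^{\ext}(\bp, \sigma) = \bigl\{ (\sigma, r(\bp, \bq')) : \bq' \in \tH^{-1}(\sigma) \bigr\}
\]
is in fact globally linear in $\bp$ for every fixed $\sigma$, since $r(\,\cdot\,, \bq')$ is linear and the $\sigma$--coordinate does not depend on $\bp$; linearity is a one-piece special case of piecewise linearity. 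Theorem~\ref{TheoGeneralCaseVariant} then supplies a strategy whose average of vector payoffs sits at $\ell^2$--distance at most $\square \bigl( T^{-1/5} \sqrt{\ln (T/\delta)} + T^{-2/5} \ln (T/\delta) \bigr)$ from $\cC^{\ext}$ with probability at least $1-\delta$; composing with the reduction above yields the same rate (after rescaling $\square$) for $R^{\ext}_T$.

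The main technical obstacle is exactly the Lipschitz control used in the reduction from distance-to-$\cC^{\ext}$ to external regret: one needs $\sigma \mapsto \max_\bp \rho(\bp, \sigma)$ to be Lipschitz on $\cF$ with an explicit constant. I would obtain this from the piecewise-linear decomposition of $\tH^{-1}$ provided by Lemma~\ref{lm:geocorr}, which ensures that for each $\bp$ the function $\rho(\bp, \cdot)$ is a finite maximum (by LP duality) of linear functionals of $\sigma$ with slopes bounded uniformly in $\bp$ by a quantity controlled by $R$ and $\kappa_\Phi$; taking the further maximum over $\bp$ preserves this Lipschitz bound. Once this Lipschitz step is carried out, the remaining verifications are essentially routine bookkeeping.
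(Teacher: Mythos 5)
Your construction is the same as the paper's: the auxiliary payoff $(r(i,j),\tH(\delta_j))$, the epigraph-type target set, the verification of Condition~\apm{} via an argmax selection, and the Lipschitz reduction from $\ell^2$--distance to regret all match the paper's proof (and your Lipschitz sketch via the piecewise linearity of $\tH^{-1}$ and $\Phi$ is in the same spirit as the paper's argument through $\kappa_\Phi$ and Lemma~\ref{lm:DVT}). There is, however, one genuine error: your justification of Assumption~\ref{As:BPL}. You claim that $\om^{\ext}(\,\cdot\,,\sigma)$ is \emph{globally linear} in $\bp$ ``since $r(\,\cdot\,,\bq')$ is linear.'' This conflates the linearity of each individual selection $\bp \mapsto r(\bp,\bq')$ with set-valued linearity of the union over $\bq' \in \tH^{-1}(\sigma)$, which would require
\[
\bigl\{ r\bigl(\tfrac12\bp+\tfrac12\bp',\bq'\bigr) : \bq' \in \tH^{-1}(\sigma) \bigr\}
\;=\; \tfrac12\bigl\{ r(\bp,\bq') : \bq' \bigr\} + \tfrac12\bigl\{ r(\bp',\bq'') : \bq'' \bigr\}\,,
\]
with possibly \emph{different} $\bq'$ and $\bq''$ on the right-hand side. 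In general only the inclusion $\subseteq$ holds (concavity in $\bp$), and it is typically strict: in the matching-pennies-in-the-dark game of Example~\ref{ex:1} one has $\om\bigl(\tfrac12\delta_T+\tfrac12\delta_B,\,\sigdark\bigr)=\{0\}$ while $\tfrac12\,\om(\delta_T,\sigdark)+\tfrac12\,\om(\delta_B,\sigdark)=[-1,1]$. Moreover, if your reasoning were valid it would show that \emph{every} partial-monitoring game is bi-piecewise linear, contradicting the counterexample the paper gives in Section~\ref{sec:galstruc}.

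The conclusion you need is nonetheless true, but for a different reason, which is exactly where the scalarity of the reward enters. Writing $\tH^{-1}(\sigma)$ as the convex hull of finitely many vertices $\bq_{\sigma,1},\ldots,\bq_{\sigma,M}$, one gets
\[
\om_1(\bp,\sigma) = \Bigl[\, \min_{k} r(\bp,\bq_{\sigma,k})\,,\ \max_{k'} r(\bp,\bq_{\sigma,k'}) \,\Bigr]\,,
\]
an interval whose endpoints are a minimum and a maximum of finitely many linear functions of $\bp$, hence piecewise linear; on each polytope of a common refinement where the relevant orderings are fixed, the endpoints are linear and the set-valued map is linear there, so $\om_1(\,\cdot\,,\sigma)$ is \emph{piecewise} linear but not linear. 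With this repair your argument goes through and coincides with the paper's proof.
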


The proof below is an extension to the setting of partial monitoring
of the original proof and strategy of~\citemor{Bla56b} for the case of external regret under full monitoring:
in the latter case the vector-payoff function $\underline{r}$ and the set $\cC$ considered in our proof are equal to the ones
considered by Blackwell. \medskip

\begin{proof}
We embed $\cF$ into $\R^{\cI \times \cH}$ so that in this proof we will be working in
the vector space $\R^d = \R \times \R^{\cI \times \cH}$.
We consider the closed convex set $\cC$ and the vector-valued payoff
function $\underline{r}$ respectively defined by
\[
\cC = \left\{ (z, \sigma) \in \R \times \cF : \ \ z \geq \max_{\bp \in \Delta(\cI)} \rho( \bp, \, \sigma) \right\}
\qquad \mbox{and} \qquad
\underline{r}(i,j) = \left[ \begin{array}{c} r(i,j) \\ \tH(\delta_j) \end{array} \right]\,,
\]
for all $(i,j) \in \cI \times \cJ$.

We first show that Condition~{\apm} is satisfied for the considered convex set $\cC$ and game
$(\underline{r},H)$. To do so, by continuity of $\rho$ in its first argument,
we associate with each $\bq \in \Delta(\cJ)$
an element $\phi(\bq) \in \Delta(\cI)$ such that
\[
\phi(\bq) \in \mathop{\mathrm{argmax}}_{\bp \in \Delta(\cI)} \rho \bigl( \bp, \, \tH(\bq) \bigr)\,.
\]
Then, given any $\bq \in \Delta(\cJ)$, we note that for all $\bq'$ satisfying
$\tH(\bq') = \tH(\bq)$, we have by definition of $\rho$,
\[
r \bigl( \phi(\bq), \, \bq' \bigr) \geq \rho \bigl( \phi(\bq), \, \tH(\bq') \bigr)
= \max_{\bp \in \Delta(\cI)} \rho \bigl( \bp, \, \tH(\bq') \bigr)\,,
\]
which shows that $\underline{r} \bigl( \phi(\bq), \, \bq' \bigr) \in \cC$.
The required condition is thus satisfied.

We then show that Assumption~\ref{As:BPL} is satisfied. To do so, we will actually prove the
stronger property that the mappings $\om(\,\cdot\,,\sigma)$ are piecewise linear for all $\sigma \in \cF$; we fix such a $\sigma$
in the sequel. Only the first coordinate $r$ of $\underline{r}$ depends on $\bp$, so the desired property is true
if and only if the mapping $\om_1(\,\cdot\,,\sigma)$ defined by
\[
\bp \in \Delta(\cI) \,\, \longmapsto \,\, \om_1(\bp,\sigma) = \left\{ r(\bp,\bq) : \ \ \bq \in \Delta(\cJ) \
\mbox{such that} \ \tH(\bq) = \sigma \right\}
\]
is piecewise linear.
Since $\tH$ is linear, the set
\[
\left\{ \bq \in \Delta(\cJ) \ \mbox{such that} \ \tH(\bq) = \sigma \, \right\}
\]
is a polytope, thus, the convex hull of some finite set $\{\bq_{\sigma,1},\,\ldots,\,\bq_{\sigma,M}\} \subset \Delta(\cJ)$.
Therefore, for every $\bp \in \Delta(\cI)$, by linearity of $r$ (and by the fact that it takes one-dimensional values),
\begin{equation}
\label{eq:omlin}
\om_1(\bp,\sigma) = \co \Big\{ r(\bp,\bq_{\sigma,1}), \, \ldots, \, r(\bp,\bq_{\sigma,M}) \Big\}
= \left[\min_{k\in\{1,..,M\}} r(\bp,\bq_{\sigma,k})\,, \,\,\, \max_{k'\in\{1,..,M\}} r(\bp,\bq_{\sigma,k'}) \right],
\end{equation}
where $\co$ stands for the convex hull. Since all applications
$r(\,\cdot\,,\bq_{\sigma,k})$ are linear, their minimum and their maximum are piecewise linear functions,
thus $\om_1(\,\cdot\,,\sigma)$ is also piecewise linear.
Assumption~\ref{As:BPL} is thus satisfied, as claimed.
\medskip

Theorem~\ref{TheoGeneralCase} can therefore be applied to
exhibit the convergence rates; we simply need to relate the quantity of interest
here to the one considered therein. To that end we use the fact that the mapping
\[
\sigma \in \cF \, \longmapsto \, \max_{\bp \in \Delta(\cI)} \, \rho(\bp,\sigma)
\]
is Lipschitz, with Lipschitz constant in $\ell^2$--norm denoted by $L_\rho$; the proof of this fact
is detailed below.

Now, the regret is non positive as soon as $\sum_{t=1}^T \underline{r}(I_t,J_t)/T$ belongs to $\cC$;
we therefore only need to consider the case when this average is not in $\cC$. In the latter case, we
denote by $(\widetilde{r}_T,\widetilde{\sigma}_T)$ its projection in $\ell^2$--norm onto $\cC$.
We have first
that the defining inequality of $\cC$ is an equality on its border, so that
\[
\widetilde{r}_T = \max_{\bp \in \Delta(\cI)} \,\, \rho \bigl( \bp, \widetilde{\sigma}_T \bigr)\,;
\]
and second, that
\begin{eqnarray*}
R^{\ext}_T & =&
\max_{\bp \in \Delta(\cI)} \,\, \rho \Bigl( \bp, \tH \bigl( \wh{\bq}_T \bigr) \Bigr) -
\frac{1}{T} \sum_{t=1}^T r(I_t,J_t) \\
&\leq& \left|\max_{\bp \in \Delta(\cI)} \,\, \rho \Bigl( \bp, \tH \bigl( \wh{\bq}_T \bigr) \Bigr)- \max_{\bp \in \Delta(\cI)} \,\, \rho \bigl( \bp,
\widetilde{\sigma}_T \bigr)\right|+ \left|\, \widetilde{r}_T -
\frac{1}{T} \sum_{t=1}^T r(I_t,J_t) \right|\\
&\leq& L_\rho \,  \Bnorm[ \widetilde{\sigma}_T - \tH \bigl( \wh{\bq}_T \bigr)]_2 + \left|\, \widetilde{r}_T -
\frac{1}{T} \sum_{t=1}^T r(I_t,J_t) \right|\\
& \leq & \sqrt{2} \, \max \bigl\{ L_\rho, 1 \bigr\} \, \norm[ {\left[ \begin{array}{c} \widetilde{r}_T \\ \widetilde{\sigma}_T \end{array} \right]}
- \frac{1}{T} \sum_{t=1}^T \underline{r}(I_t,J_t)]_2 \\
& = & \sqrt{2} \, \max \bigl\{ L_\rho, 1 \bigr\} \, \inf_{c \in \cC} \ \norm[c - \frac{1}{T} \sum_{t=1}^T \underline{r}(I_t,J_t)]_2.
\end{eqnarray*}
The claimed rates are now seen to follow from the ones indicated in Theorem~\ref{TheoGeneralCaseVariant}.

It only remains to prove the indicated Lipschitzness.
(All Lipschitzness statements that follow will be with respect to the $\ell^2$--norms.)
We have by Definition~\ref{def:oom} that for all $\bp \in \Delta(\cI)$ and $\sigma \in \cF$,
\[
\rho(\bp,\sigma) = \min \,\, \oom_1\bigl(\bp,\Phi(\sigma)\bigr)\,,
\]
where the linear $\oom_1$ is indifferently either relative to $\om_1$ or is the projection onto the first component
of the function $\oom$ relative to $\om$.
By Remark~\ref{rk:Lip} the mapping $\sigma \in \cF \mapsto \Phi(\sigma)$ is $\kappa_{\Phi}$--Lipschitz;
this entails, by Lemma~\ref{lm:DVT}, that for all $\bp \in \Delta(\cI)$,
the mapping $\sigma \in \cF \mapsto \rho(\bp,\sigma)$ is $R\sqrt{N_\cB}\,\, \kappa_\Phi$--Lipschitz.
In particular, since the latter Lipschitz constant is independent of $\bp$, the mapping
\[
\sigma \in \cF \,\, \longmapsto \,\, \max_{\bp \in \Delta(\cI)} \rho(\bp,\sigma)
\]	
is $R\sqrt{N_\cB}\,\, \kappa_\Phi$--Lipschitz as well, which concludes the proof.
\end{proof}

A similar argument to the one in \citemor{Per11JMLR} shows that the convex set $\cC$ is defined by a finite number of piecewise linear equations, it is therefore a polyhedron so the projection onto it, and as well the computation  of the strategy, can be done efficiently. We sketch the argument below, and refer the reader to  \citemor{Per11JMLR} for details.
Equation~\eqref{eq:omlin} indicates a priori that for each $\sigma \in \cF$,
there exist a finite number $M_\sigma$ (depending on $\sigma$) of mixed actions $\bq_{\sigma,1}, \, \ldots, \, \bq_{\sigma,M_\sigma}$
such that for all $\bp \in \Delta(\cI)$, we have $\rho(\bp,\sigma)=\min \bigl\{ r(\bp,\bq_{\sigma,1}),\ldots, r(\bp,\bq_{\sigma,M_\sigma})
\bigr\}$. But by an argument stated in \citemor{Per11JMLR},
\[
\sigma \, \longmapsto \, \left\{ \bq \in \Delta(\cJ) \ \mbox{such that} \ \tH(\bq) = \sigma \, \right\}
\]
evolves in a piecewise linear way and thus
there exist a finite number $M$ of piecewise linear functions
$\sigma \mapsto q'_{\sigma,k}$, with $k = 1,\ldots,M$, such that, for all $\sigma \in \cF$,
\[
\bigl\{ \bq_{\sigma,1}, \, \ldots, \, \bq_{\sigma,M_\sigma} \bigr\}
= \bigl\{ \bq'_{\sigma,1}, \, \ldots, \, \bq'_{\sigma,M} \bigr\}\,.
\]
(There can be some redundancies between the $\bq'_{\sigma,k}$.)
Because of this, we have that for all $\bp \in \Delta(\cI)$ and $\sigma \in \cF$,
\[
\rho(\bp,\sigma) =\min \bigl\{ r(\bp,\bq'_{\sigma,1}),\ldots, r(\bp,\bq'_{\sigma,M}) \bigr\}\,.
\]
Each function $\sigma \mapsto \bq'_{\sigma,k}$ being piecewise linear,
one can construct a finite set $\{\bp_1,\ldots,\bp_K\} \subset \Delta(\cI)$ such that,
for any $\sigma \in \cF$, the mapping $\bp \mapsto \rho(\bp,\sigma)$ is maximized at one of these $\bp_k$.
The convex set $\cC$ is therefore defined by a finite number of piecewise linear
equations, it is therefore a polyhedron; therefore the projection onto it,
hence the computation of the proposed strategy, can be done efficiently.

\subsubsection{Internal / swap regret.}

\citemor{FoVo99} defined internal regret with full monitoring as follows. A player has no internal regret if, for every action $i \in \cI$,
he has no external regret on the stages when this specific action $i$ was played. In other words,  $i$ is the best response to the empirical distribution of action of the other player on these stages.

With partial monitoring, the first player evaluates his payoffs in a pessimistic way
through the function $\rho$ defined above. This function is not linear over $\Delta(\cI)$ in general (it is concave),
so that the best responses are not necessarily pure actions $i \in \cI$ but mixed actions, i.e., elements of $\Delta(\cI)$.
Following \citemor{LeSo07} one therefore can partition the stages not depending on the pure actions actually played
but on the mixed actions $\bp_t \in \Delta(\cI)$ used to draw them. To this end, it is convenient to
assume that the strategies of the first player need to pick these mixed actions in a finite (but possibly thin) grid
of $\Delta(\cI)$, which we denote by $\bigl\{ \bp_g, \,\, g \in \cG \bigr\}$, where $\cG$ is a finite set.
At each round $t$, the first player picks an index $G_t \in \cG$ and uses the distribution $\bp_{G_t}$ to draw his action $I_t$.
Up to a standard concentration-of-the-measure argument, we will measure the payoff at round $t$ with $r\bigl( \bp_{G_t}, J_t \bigr)$
rather than with $r( I_t, J_t )$.

For each $g \in \cG$, we denote by $N_T(g)$ the number of stages in $\{ 1,\ldots,T \}$ for which we had $G_t = g$
and, whenever $N_T(g) > 0$,
\[
\wh{\bq}_{T,g} = \frac{1}{N_T(g)} \sum_{t : G_t = g} \, \delta_{J_t}\,.
\]
We define $\wh{\bq}_{T,g}$ is an arbitrary way when $N_T(g) = 0$.
The internal regret of the first player at round $T$ is measured as
\[
R_T^{\inter} = \max_{g,g' \in \cG} \frac{N_T(g)}{T} \biggl( \rho \Bigl( \bp_{g'}, \tH \bigl( \wh{\bq}_{T,g} \bigr) \Bigr)
- r \bigl( \bp_{g}, \wh{\bq}_{T,g} \bigr) \biggr)\,.
\]
Actually, our proof technique rather leads to the minimization of some swap regret (see \citemor{BlMa07}
for the definition of swap regret in full monitoring):
\[
R_T^{\swap} = \sum_{g \in \cG} \frac{N_T(g)}{T} \, \biggl( \max_{g' \in \cG} \rho \Bigl( \bp_{g'}, \tH \bigl( \wh{\bq}_{T,g} \bigr) \Bigr)
- r \bigl( \bp_{g}, \wh{\bq}_{T,g} \bigr) \biggr)_+ \, .
\]

Again, the following bound on the swap regret easily follows from Theorem~\ref{TheoGeneralCase};
the latter constructs a simple and direct strategy to control the swap regret, thus also the internal regret.
It therefore improves the results of \citemor{LeSo07} and \citemor{Per09}, two articles that presented
more involved and less efficient strategies to do so (strategies
based on auxiliary strategies using grids that need to be refined over time and whose complexities is exponential in the size
of these grids; ideas all in all similar to what is done in calibration, see the references
provided in Section~\ref{sec:robappr-gal}). Moreover, we provide convergence rates.

\begin{corollary}
\label{cor:IR}
The first player has an explicit
strategy such that for all $T$ and all strategies of the second player, with probability
at least $1-\delta$,
\[
R^{\swap}_T \,\, \leq \,\,
\square \left(T^{-1/5}\sqrt{\ln \frac{T}{\delta}} + T^{-2/5} \ln \frac{T}{\delta} \right)
\]
for some constant $\square$ depending only on the game $(r,\,H)$ at hand and on the size of the finite
grid $\cG$.
\end{corollary}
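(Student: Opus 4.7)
The plan is to mirror the external-regret proof (Corollary~\ref{cor:ER}): encode the swap regret as the residual distance to a convex target set in a suitable vector-valued partial monitoring game, and apply Theorem~\ref{TheoGeneralCaseVariant}. The first player's effective action becomes the index $G_t \in \cG$ of the grid point played (then drawing $I_t \sim \bp_{G_t}$); since he observes both $I_t$ and the signal $S_t$, the effective signaling structure $H''(g,j)(i,s) = \bp_g(i)\,H(i,j)(s)$ is informationally equivalent to $\tH$, so Condition~{\apm} may be verified with respect to the original $\tH$.

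To make the swap regret visible as a set-distance, I would introduce, for each $g \in \cG$, three blocks of per-group coordinates and set
\[
\underline{r}(g,j) \,=\, \bigl(\, \ind_{\{g=g'\}}\,r(\bp_{g'},j),\; \ind_{\{g=g'\}}\,\tH(\delta_j),\; \ind_{\{g=g'\}} \,\bigr)_{g' \in \cG},
\]
whose time averages recover $(\bar z_g, \bar \sigma_g, \bar n_g) = \tfrac{N_T(g)}{T} \bigl(r(\bp_g,\wh{\bq}_{T,g}),\, \tH(\wh{\bq}_{T,g}),\, 1\bigr)$. With the perspective $\widetilde{\rho}(n,\sigma) = n\, \max_{g'\in\cG} \rho(\bp_{g'},\sigma/n)$ (and $\widetilde{\rho}(0,0)=0$) of the convex map $\sigma \mapsto \max_{g'} \rho(\bp_{g'},\sigma)$, set
\[
\cC \,=\, \Bigl\{\, ((z_g,\sigma_g,n_g))_g : \textstyle\sum_g n_g = 1,\; n_g \geq 0,\; \sigma_g \in n_g\cF,\; z_g \geq \widetilde{\rho}(n_g,\sigma_g) \text{ for all } g \,\Bigr\}.
\]
This set is closed and convex, and a direct computation gives $R_T^{\swap} = \sum_g \bigl(\widetilde{\rho}(\bar n_g, \bar \sigma_g) - \bar z_g\bigr)_+$.

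Condition~{\apm} is easy to verify: given $\bq \in \Delta(\cJ)$ with $\sigma = \tH(\bq)$, pick $g^\star \in \mathop{\mathrm{argmax}}_{g} \rho(\bp_g,\sigma)$ and play $\bp = \delta_{g^\star}$; for any $\bq'$ with $\tH(\bq') = \sigma$, the only non-zero block of $\underline{r}(\delta_{g^\star},\bq')$ satisfies $z_{g^\star} = r(\bp_{g^\star},\bq') \geq \rho(\bp_{g^\star},\sigma) = \widetilde{\rho}(1,\sigma)$, so the vector lies in $\cC$. For bi-piecewise linearity (Assumption~\ref{As:BPL}), each $z_g$ coordinate is scalar-valued and equals $p_g \cdot r(\bp_g,\bq')$, so fixing $\sigma$ makes the range $\{r(\bp_g,\bq') : \tH(\bq') = \sigma\}$ a scalar interval whose endpoints are min and max of finitely many linear functions, exactly as in the external-regret proof; the $\sigma_g$ and $n_g$ coordinates are linear in $\bp$ and constant in $\bq'$. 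Because $\cC$ couples the $g$-blocks only through the linear constraint $\sum_g n_g = 1$, the per-block analysis aggregates into a joint decomposition of $\Delta(\cG)$ with vertex set $\cA$, yielding the multilinear representation of Lemma~\ref{lm:geocorr}.

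Theorem~\ref{TheoGeneralCaseVariant} then supplies a constant-per-step strategy ensuring that the distance from the time-averaged payoff to $\cC$ is $O\bigl(T^{-1/5}\sqrt{\ln(T/\delta)} + T^{-2/5}\ln(T/\delta)\bigr)$ with probability at least $1-\delta$. The main obstacle is to convert this set-distance into a bound on $R_T^{\swap}$: this requires the Lipschitzness of $\widetilde{\rho}$ on the full cone $\{(n,\sigma) : \sigma \in n\cF,\, n \geq 0\}$, including at $n = 0$. This uniform Lipschitzness is a standard fact for perspectives of bounded Lipschitz functions on bounded convex sets (with constant depending on $\sup|f|$, $\mathrm{Lip}(f)$, and the diameter of $\cF$); applied to $f = \max_{g'}\rho(\bp_{g'},\cdot)$, whose boundedness by $R$ and Lipschitzness were already established in the proof of Corollary~\ref{cor:ER}, it gives $R_T^{\swap} \leq |\cG|\,C\,\mathrm{dist}(\bar v_T,\cC)$ for a constant $C$ depending only on the game, yielding the announced rate.
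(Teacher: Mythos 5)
Your proposal is correct and follows essentially the same route as the paper: reduce to approachability of a convex set in a lifted game with action set $\cG$, verify Condition~{\apm} via the argmax reply $g^\star(\bq)$, verify Assumption~\ref{As:BPL} by the same scalar-interval argument as for external regret, and convert the set-distance to a swap-regret bound via a uniform Lipschitz estimate for the positively homogeneous extension of $\max_{g'}\rho(\bp_{g'},\cdot)$. The only difference is cosmetic: you carry an explicit coordinate $n_g$ and phrase things via the perspective function, whereas the paper encodes the scaling inside the cone $\cF_{\cone}$ and recovers $|\lambda-\lambda'|$ from $\bnorm[\lambda\sigma-\lambda'\sigma']$ using $\norm[\sigma]_1=N_{\cI}$ --- the Lipschitz computation you call ``standard'' is exactly the one the paper carries out in its Step~4.
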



\begin{proof}
The proof of this corollary is based on ideas similar to the ones used in the proof of
Corollary~\ref{cor:ER}; $\cG$ will play the role of the action set of the first player.
The proof proceeds in four steps. In the first step, we construct an approachability setup and show that Condition~{\apm}
applies. In the second step, we show that Assumption~\ref{As:BPL} is satisfied.
In the third step we analyze the convergence rates of the swap regret.
In the fourth and final step, we show that the set we are approaching possess some smoothness properties by
providing a uniform Lipschitz bound on certain functions.

{\bf Step 1:}
We denote by
\[
\cF_{\cone} = \bigl\{ \lambda \sigma, \ \ \sigma \in \cF, \ \lambda \in \R_+ \bigr\}
\]
the cone generated by $\cF$ and extend linearly
$\rho : \Delta(\cI) \times \cF \to \R$ into a mapping $\rho : \Delta(\cI) \times \cF_{\cone} \to R$
as follows: for all $\bp \in \Delta(\cI)$, for all $\lambda \geq 0$ with $\lambda \ne 1$, and all $\sigma \in \cF$,
\[
\rho(\bp,\,\lambda\sigma) = \left\{
\begin{array}{lp{0.5cm}l}
0 & & \mbox{if} \ \lambda = 0, \\
\lambda \, \rho( \bp, \, \sigma) & &
\mbox{if} \ \lambda > 0.
\end{array}
\right.
\]
In the sequel, we embed $\cF_{\cone}$ into $\R^{\cI \times \cH}$.

The closed convex set $\cC$ and the vector-valued payoff
function $\underline{r}$ are then respectively defined by
\[
\cC = \left\{ (z_g, \bv_g)_{g \in \cG} \in \bigl( \R \times \cF_{\cone} \bigr)^{\cG} : \ \
\forall \, g \in \cG, \ \ z_g \geq \max_{g' \in \cG} \rho \bigl( \bp_{g'}, \bv_g \bigr) \right\}
\]
and, for all $(g,j) \in \cG \times \cJ$,
\[
\underline{r}(g,j) = \left[ \begin{array}{c} r\bigl( \bp_g, j \bigr) \, \ind_{ \{ g' = g \} } \vspace{.2cm} \\
\tH(\delta_j) \, \ind_{ \{ g' = g \} } \end{array} \right]_{g' \in \cG} \,.
\]
To show that $\cC$ is $\underline{r}$--approachable,
we associate with each $\bq \in \Delta(\cJ)$
an element $g^\star(\bq) \in \cG$ such that
\[
g^\star(\bq) \in \mathop{\mathrm{argmax}}_{g \in \cG} \rho \bigl( \bp_g, \, \tH(\bq) \bigr)\,.
\]
Then, given any $\bq \in \Delta(\cJ)$, we note that for all $\bq'$ satisfying
$\tH(\bq') = \tH(\bq)$, the components of the vector $\underline{r}\bigl( g^\star(\bq), \bq' \bigr)$
are all null but the ones corresponding to $g^\star(\bq)$, for which we have
\[
r\bigl( \bp_{g^\star(\bq)}, \bq'\bigr)
\geq \rho \Bigl( \bp_{g^\star(\bq)}, \tH \bigl( \bq' \bigr) \Bigr)
= \rho \Bigl( \bp_{g^\star(\bq)}, \tH \bigl( \bq \bigr) \Bigr)
= \max_{g' \in \cG} \rho \Bigl( \bp_{g'}, \tH \bigl( \bq \bigr) \Bigr)
= \max_{g' \in \cG} \rho \Bigl( \bp_{g'}, \tH \bigl( \bq' \bigr) \Bigr)\,,
\]
where the first inequality is by definition of $\rho$. Therefore,
$\underline{r}\bigl( g^\star(\bq), \bq' \bigr) \in \cC$.
Condition~{\apm} in Lemma~\ref{lm:Psi} and Theorem~\ref{TheoGeneralCase}
is thus satisfied, so that we have approachability.

{\bf Step 2:} We then show that Assumption~\ref{As:BPL} is satisfied.
It suffices to show that for all $\sigma \in \cF$,
the application
\[
\pi = (\pi_g)_{g \in \cG} \in \Delta(\cG) \ \longmapsto \ \om_1(\pi,\sigma) = \Bigl\{
\bigl( \pi_g \, r(\bp_g,\bq) \bigr)_{g \in \cG} :
\ \ \bq \in \Delta(\cJ) \ \mbox{such that} \ \tH(\bq) = \sigma \Bigr\}
\]
is piecewise linear (as the other components in the definition of $\om$ are linear in $\pi$).
This is the case since for each $g$, the application
\[
\pi \in \Delta(\cG) \ \longmapsto \ \Bigl\{
\pi_g \, r(\bp_g,\bq) :
\ \ \bq \in \Delta(\cJ) \ \mbox{such that} \ \tH(\bq) = \sigma \Bigr\}
\]
is seen to be piecewise linear, by using the same one-dimensional argument as in the
proof of Corollary~\ref{cor:ER}.

{\bf Step 3:}
We now exhibit the convergence rates.
In view of the form of the defining set of constraints for $\cC$,
the coordinates of the elements in $\cC$ can be grouped according to each $g \in \cG$ and
projections onto $\cC$ can therefore be done separately for each such group.
The group $g$ of coordinates of $\sum_{t=1}^T \underline{r}(G_t,J_t)/T$
is formed by
\[
\frac{N_T(g)}{T} \, r\bigl(\bp_g, \wh{\bq}_{T,g} \bigr) \qquad \mbox{and} \qquad
\frac{N_T(g)}{T} \, \tH\bigl(\wh{\bq}_{T,g} \bigr)\,;
\]
when
\[
\frac{N_T(g)}{T} \, r\bigl(\bp_g, \wh{\bq}_{T,g} \bigr) \geq \max_{g' \in \cG} \,\, \rho \!
\left( \bp_{g'}, \, \frac{N_T(g)}{T} \, \tH\bigl(\wh{\bq}_{T,g} \bigr) \right),
\]
we denote these quantities by $\tr_{T,g}$ and $\wt{\bv}_{T,g}$. Otherwise, we project
this pair on the set
\[
\cC_g = \left\{ (z_g, \bv_g) \in \R \times \cF_{\cone} : \ \
z_g \geq \max_{g' \in \cG} \rho \bigl( \bp_{g'}, \bv_g \bigr) \right\}
\]
and denote by $\tr_{T,g}$ and $\wt{\bv}_{T,g}$ the coordinates of the projection;
they satisfy the defining inequality of $\cC_g$ with equality,
\[
\tr_{T,g} = \max_{g' \in \cG} \rho \bigl( \bp_{g'}, \wt{\bv}_g \bigr)\,.
\]

By distinguishing for each $g$ according to which of the two cases above arose
(for the first inequality), we may decompose and upper bound the swap regret as follows,
\begin{eqnarray*}
R^{\swap}_T & =&
\sum_{g \in \cG} \frac{N_T(g)}{T} \, \biggl( \max_{g' \in \cG} \rho \Bigl( \bp_{g'}, \tH \bigl( \wh{\bq}_{T,g} \bigr) \Bigr)
- r \bigl( \bp_{g}, \wh{\bq}_{T,g} \bigr) \biggr)_+ \\
& = &
\sum_{g \in \cG} \left( \max_{g' \in \cG} \rho \!
\left( \bp_{g'}, \, \frac{N_T(g)}{T} \, \tH\bigl(\wh{\bq}_{T,g} \bigr) \right)
- \frac{N_T(g)}{T} \, r\bigl(\bp_g, \wh{\bq}_{T,g} \bigr) \right)_{\! +} \\
& \leq &
\sum_{g \in \cG} \left| \max_{g' \in \cG} \rho \!
\left( \bp_{g'}, \, \frac{N_T(g)}{T} \, \tH\bigl(\wh{\bq}_{T,g} \bigr) \right)
- \max_{g' \in \cG} \rho \bigl( \bp_{g'}, \wt{\bv}_{g,T} \bigr) \right| + \sum_{g \in \cG} \, \biggl| \wt{r}_{T,g}
- \frac{N_T(g)}{T} \, r\bigl(\bp_g, \wh{\bq}_{T,g} \bigr) \biggr| \\
& \leq & \sum_{g \in \cG} \overline{L}_\rho \norm[\frac{N_T(g)}{T} \, \tH\bigl(\wh{\bq}_{T,g} \bigr) - \wt{\bv}_{g,T}]_2
+ \sum_{g \in \cG} \, \biggl| \wt{r}_{T,g} - \frac{N_T(g)}{T} \, r\bigl(\bp_g, \wh{\bq}_{T,g} \bigr) \biggr|\,,
\end{eqnarray*}
where we used a fact proved below, that the application
\[
\bv \in \cF_{\cone} \, \longmapsto \, \max_{g' \in \cG} \, \rho\bigl(\bp_{g'},\bv\bigr)
\]
is $\overline{L}_\rho$--Lipschitz. In the last inequality we had a sum of $\ell^2$--norms, which
can be bounded by a single $\ell^2$--norm,
\begin{eqnarray*}
R^{\swap}_T & \leq & \max\bigl\{\overline{L}_\rho,1\bigr\} \, \sqrt{2N_{\cG}}  \, \left\Arrowvert
\left[ \begin{array}{c} \widetilde{r}_{T,g} \vspace{.1cm} \\
\widetilde{\bv}_{T,g} \end{array} \right]_{g \in \cG} - \frac{1}{T} \sum_{t=1}^T \underline{r}(I_t,J_t)
\right\Arrowvert_2 \\
& \leq & \max\bigl\{\overline{L}_\rho,1\bigr\}
\, \sqrt{2N_{\cG}}  \, \inf_{c \in \cC} \norm[ c - \frac{1}{T} \sum_{t=1}^T \underline{r}(I_t,J_t) ]_2\,,
\end{eqnarray*}
where we denoted by $N_\cG$ the cardinality of $\cG$.
Resorting to the convergence rate stated in Theorem~\ref{TheoGeneralCaseVariant} concludes the proof, up to
the claimed Lipschitzness, which we now prove. (All Lipschitzness statements that follow will be with
respect to the $\ell^2$--norms.)

{\bf Step 4:}
To do so, it suffices to show that for all fixed elements $\bp \in \Delta(\cI)$,
the functions $\bv \in \cF_{\cone} \mapsto \rho(\bp,\bv)$
are Lipschitz, with a Lipschitz constant $\overline{L}_\rho$ that is independent of $\bp$.
Note that we already proved at the end of the proof of Corollary~\ref{cor:ER} that
$\sigma \in \cF \mapsto \rho(\bp,\sigma)$ is Lipschitz, with a Lipschitz constant $L_\rho$
independent of $\bp$. Consider now two elements $\bv, \, \bv' \in \cF_{\cone}$,
which we write as $\bv = \lambda \sigma$ and $\bv' = \lambda' \sigma'$,
with $\sigma, \, \sigma' \in \cF$ and $\lambda, \, \lambda' \in \R_+$.
Using triangle inequalities, the Lipschitzness of $\rho$ on $\cF$,
and the fact that $r$ thus $\rho$ are bounded by $R$,
\begin{eqnarray*}
\bigl| \rho(\bp,\lambda\sigma) - \rho(\bp,\lambda'\sigma') \bigr| & \leq &
\bigl| \lambda \bigl( \rho(\bp,\sigma) - \rho(\bp,\sigma') \bigr) \bigr|
+ \bigl| (\lambda - \lambda') \rho(\bp,\sigma') \bigr| \\
& \leq & \lambda \, L_\rho \bnorm[\sigma - \sigma']_2 +
R \, \bigl| \lambda - \lambda' \bigr| \\
& \leq & L_\rho \bnorm[\lambda\sigma - \lambda'\sigma' + (\lambda' - \lambda)\sigma' ]_2 +
R \, \bigl| \lambda - \lambda' \bigr| \\
& \leq & L_\rho \bnorm[\lambda\sigma - \lambda'\sigma']_2 +
\bigl( R + L_\rho N_{\cI} \bigr) \, \bigl| \lambda - \lambda' \bigr|\,,
\end{eqnarray*}
where we used also for the last inequality that since $\sigma$ is a vector of $N_{\cI}$ probability
distributions over the signals, $\norm[\sigma]_2 \leq \norm[\sigma]_1 = N_{\cI}$.
To conclude the argument, we simply need to show that $\bigl| \lambda - \lambda' \bigr|$
can be bounded by $\bnorm[\lambda\sigma - \lambda'\sigma']_2$ up to some universal constant,
which we do now. We resort again to the fact that
$\norm[\sigma]_1 = \norm[\sigma']_1 = N_{\cI}$ and can thus write, thanks to a triangle
inequality and assuming with no loss
of generality that $\lambda' < \lambda$, that
\[
\bigl| \lambda - \lambda' \bigr| = \frac{1}{N_{\cI}} \Bigl( \lambda \norm[\sigma]_1 - \lambda' \norm[\sigma']_1 \Bigr)
\leq \frac{1}{N_{\cI}} \norm[ \lambda \sigma - \lambda' \sigma' ]_1
\leq \frac{\sqrt{N_{\cH} N_{\cI}}}{N_{\cI}} \norm[ \lambda \sigma - \lambda' \sigma' ]_2\,,
\]
where we used the Cauchy-Schwarz inequality for the final step.
One can thus take, for instance,
\[
\overline{L}_\rho = L_\rho + \bigl( R + L_\rho N_{\cI} \bigr) \sqrt{\frac{N_{\cH}}{N_{\cI}}}\,.
\]
This concludes the proof.
\end{proof}

\section{Approachability in the case of general games with partial monitoring.}
\label{sec:galstruc}

Unfortunately, as is illustrated in the following example,
there exist games with partial monitoring that are not bi-piecewise linear.

\begin{example}
The following game (with the same action and signal sets as in Example~\ref{ex:1}) is not bi-piecewise linear.
\begin{center}
\begin{tabular}{ccccc}
\hline
& & $L$ & $M$ & $R$ \\
\hline
$T$ & & $(1,0,0,0)$ / $\sigdark$ & $(0,0,1,0)$ / $\sigdark$ & $(2,0,4,0)$ / $\sigother$ \\
$B$ & & $(0,1,0,0)$ / $\sigdark$ & $(0,0,0,1)$ / $\sigdark$ & $(0,3,0,5)$ / $\sigother$ \\
\hline
\end{tabular}
\end{center}
\end{example}

\begin{proof}
We denote mixed actions of the first player by $(p,1-p)$, where $p \in [0,1]$
denotes the probability of playing $T$ and $1-p$ is the probability of playing $B$.
It is immediate that $\om\bigl( (p,1-p), \, \sigdark \bigr)$ can be identified with the
set of all product distributions on $2 \times 2$ elements with first marginal distribution $(p,1-p)$.
The proof of Lemma~\ref{lm:geocorr} shows that the set $\cB$ associated with any game
always contains the Dirac masses on each signal; that is, $\delta_\sigdark \in \cB$. But
for $p \ne p'$ and $\lambda \in (0,1)$,
denoting $\overline{p} = \lambda \, p + (1-\lambda) p'$, one necessarily has that
\[
\om\bigl( (\overline{p},1-\overline{p}), \, \sigdark \bigr) \, \varsubsetneq \,
\lambda \, \om\bigl( (p,1-p), \, \sigdark \bigr) + (1-\lambda) \,
\om\bigl( (p',1-p'), \, \sigdark \bigr)\,;
\]
the inclusion $\subseteq$ holds by concavity of $\om$ in its first argument (Lemma~\ref{lm:cvxconc})
but this inclusion is always strict here
since the left-hand side is formed by product distributions while the right-hand side
also contains distributions with correlations. Hence, bi-piecewise linearity
cannot hold for this game.
\end{proof}

However, we will show that if Condition~{\apm} holds there exist strategies with a constant
per-round complexity to approach polytopes even when the game is not bi-piecewise linear.
That is, by considering simpler closed convex sets $\cC$, no assumption is needed on the pair $(r,H)$.

We will conclude this section by indicating that thanks to a doubling trick, Condition~{\apm} is still
 sufficient for approachability in the most general case when no assumption is made neither
on $(r,H)$ nor on $\cC$---at the cost, however, of inefficiency.

\subsection{Approachability of the negative orthant in the case of general games.}

For the sake of simplicity, we start with the case of the negative orthant $\R^d_-$.
Our argument will be based on Lemma~\ref{lm:geocorr}; we use in the sequel the
objects and notation introduced therein. We denote by $r = (r_k)_{1 \leq k \leq d}$
the components of the $d$--dimensional payoff function $r$ and introduce, for all
$k \in \{ 1,\ldots,d\}$, the set-valued mapping $\tm_{k}$ defined by
\[
\tm_k : \ \ (\bp,\,b) \in \Delta(\cI) \times \cB \ \longmapsto \ \tm_k(\bp,b) =
\Bigl\{ r_k(\bp,\bq) : \ \ \bq \in \Delta(\cJ) \ \mbox{such that} \ \tH(\bq) = b \Bigr\}\,.
\]
The mapping $\tm$ is then defined as the Cartesian product of the $\tm_k$; formally,
for all $\bp \in \Delta(\cI)$ and $b \in \cB$,
\[
\tm(\bp,b) = \Bigl\{ (z_1,\ldots,z_d) : \qquad \forall k \in \{ 1,\ldots,d\}, \quad z_k \in  \tm_k(\bp,b)
\Bigr\}\,.
\]
We then linearly extend this mapping into a set-valued mapping $\tm$ defined on $\Delta(\cI) \times \Delta(\cB)$
and finally consider the set-valued mapping $\breve{m}$ defined on $\Delta(\cI) \times \cF$ by
\[
\forall \, \sigma \in \cF, \quad \forall \, \bp \in \Delta(\cI), \qquad
\breve{m}(\bp,\sigma) = \tm\bigl(\bp,\Phi(\sigma) \bigr) = \sum_{b \in \cB} \Phi_b(\sigma) \, \tm(\bp,b)\,,
\]
where $\Phi$ refers to the mapping defined in Lemma~\ref{lm:geocorr} (based on $\om$). The lemma below indicates why $\breve{m}$
is an excellent substitute to $\om$ in the case of the approachability of the orthant $\R^d_-$.

\begin{lemma}
\label{lm:proxy}
The set-valued mappings $\breve{m}$ and $\om$ satisfy that
for all $p \in \Delta(\cI)$ and $\sigma \in \cF$,
\begin{enumerate}
\item the inclusion
$\om(\bp,\sigma) \subseteq \breve{m}(\bp, \sigma)$ holds;
\item if $\om(\bp,\sigma) \subseteq \R_-^d$, then one also has
$\breve{m}(\bp,\sigma) \subseteq \R_-^d$.
\end{enumerate}
\end{lemma}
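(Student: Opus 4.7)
My plan is to exploit the Cartesian-product structure of $\tm$ together with the linearity of $r$ and of $\tH$ in their second arguments, plus the decomposition $\om(\bp,\sigma) = \sum_{b \in \cB} \Phi_b(\sigma)\, \om(\bp,b)$ provided by Lemma~\ref{lm:geocorr}. The coordinatewise nature of the orthant $\R_-^d$ is what makes the second (a priori strict) containment enough.

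For the first claim, I would first observe the pointwise inclusion $\om(\bp,b) \subseteq \tm(\bp,b)$ for every $b \in \cB$: if $z = r(\bp,\bq)$ with $\tH(\bq) = b$, then each coordinate $z_k = r_k(\bp,\bq)$ lies in $\tm_k(\bp,b)$, so $z$ lies in the Cartesian product $\tm(\bp,b)$. Taking the $\Phi(\sigma)$--convex combination and using Lemma~\ref{lm:geocorr},
\[
\om(\bp,\sigma) = \sum_{b \in \cB} \Phi_b(\sigma)\, \om(\bp,b)
\subseteq \sum_{b \in \cB} \Phi_b(\sigma)\, \tm(\bp,b) = \breve{m}(\bp,\sigma),
\]
which is exactly the first assertion.

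For the second claim, pick an arbitrary $z \in \breve{m}(\bp,\sigma)$. By the definition of the Minkowski-type combination and of $\tm$, there exist, for every $b \in \cB$ and every coordinate $k \in \{1,\ldots,d\}$, mixed actions $\bq^{(b,k)} \in \Delta(\cJ)$ with $\tH\bigl(\bq^{(b,k)}\bigr) = b$ such that
\[
z_k = \sum_{b \in \cB} \Phi_b(\sigma)\, r_k\!\bigl(\bp,\bq^{(b,k)}\bigr).
\]
The point is that the $\bq^{(b,k)}$ may depend on $k$, reflecting the strict inclusion in the counterexample above. Define, for each $k$ separately, $\bq^{(k)} = \sum_{b \in \cB} \Phi_b(\sigma)\, \bq^{(b,k)} \in \Delta(\cJ)$. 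By linearity of $\tH$,
\[
\tH\bigl(\bq^{(k)}\bigr) = \sum_{b \in \cB} \Phi_b(\sigma)\, \tH\bigl(\bq^{(b,k)}\bigr) = \sum_{b \in \cB} \Phi_b(\sigma)\, b = \sigma,
\]
so $r\bigl(\bp,\bq^{(k)}\bigr) \in \om(\bp,\sigma) \subseteq \R^d_-$; in particular its $k$-th coordinate is nonpositive. Linearity of $r_k$ in its second argument then gives
\[
z_k = \sum_{b \in \cB} \Phi_b(\sigma)\, r_k\!\bigl(\bp,\bq^{(b,k)}\bigr) = r_k\!\bigl(\bp,\bq^{(k)}\bigr) \leq 0.
\]
Doing this for every $k$ shows $z \in \R^d_-$, and hence $\breve{m}(\bp,\sigma) \subseteq \R_-^d$.

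The main conceptual step is the one where I am allowed to choose a different $\bq^{(k)}$ for each coordinate: this is precisely the feature that lets $\breve{m}$ be a strictly larger (Cartesian-product) relaxation of $\om$, yet the orthant hypothesis is coordinatewise, so only the $k$-th coordinate of $r(\bp,\bq^{(k)})$ needs to be controlled when checking the $k$-th coordinate of $z$. No new Lipschitz or topological argument is required; the proof is a direct assembly of Lemma~\ref{lm:geocorr} and the multilinear structure of $r$ and $\tH$.
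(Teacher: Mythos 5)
Your proof is correct and follows essentially the same route as the paper's: part~1 is the same pointwise inclusion $\om(\bp,b)\subseteq\tm(\bp,b)$ combined with the decomposition of Lemma~\ref{lm:geocorr}, and part~2 is the same coordinatewise argument exploiting that $\R_-^d$ is a product of half-lines. Your part~2 is merely more explicit: you construct the witnesses $\bq^{(k)}=\sum_{b\in\cB}\Phi_b(\sigma)\,\bq^{(b,k)}$ with $\tH\bigl(\bq^{(k)}\bigr)=\sigma$, where the paper simply reads off the constraint $\sum_{b\in\cB}\Phi_b(\sigma)\,\tm_k(\bp,b)\subseteq\R_-$ component by component.
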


The interpretations of these two properties are that: {1.} $\breve{m}$--robust approaching
a set $\cC$ is more difficult than $\om$--robust approaching it; and {2.} that
if Condition~{\apm} holds for $\om$ and $\R_-^d$, it also holds for $\breve{m}$ and $\R_-^d$. \\

\begin{proof}
For  property {1.}, note that by the component-wise construction of $\tm$,
\[
\forall \, b \in \cB, \quad \forall \, \bp \in \Delta(\cI), \qquad
\om(\bp,b) \subseteq \tm(\bp,b)\,;
\]
Lemma~\ref{lm:geocorr}, the linear extension of $\tm$, and the definition of $\breve{m}$
then show that
\[
\forall \, \sigma \in \cF, \quad \forall \, \bp \in \Delta(\cI), \qquad \quad
\om(\bp,\sigma) = \sum_{b \in \cB} \Phi_b(\sigma) \, \om(\bp,b)
\subseteq \tm\bigl(\bp, \, \Phi(\sigma) \bigr) = \breve{m}(\bp,\sigma)\,.
\]

As for property {2.}, it suffices to work component-wise. Note that
(by Lemma~\ref{lm:geocorr} again) the stated assumption exactly means that
$\sum_{b \in \cB}\Phi_b(\sigma) \, \om(\bp,b) \subset \R_-^d$. In particular, rewriting the non-positivity constraint
for each of the $d$ components of the payoff vectors, we get
\[
\sum_{b \in \cB} \Phi_b(\sigma) \, \tm_k(\bp,b) \subseteq \R_-\,,
\]
for all $k \in \{ 1,\ldots,d\}$; thus, in particular, $\sum_{b \in \cB} \Phi_b(\sigma) \, \tm(\bp,b)
= \breve{m}(\bp,\sigma) \subseteq \R_-^d$.
\end{proof}

We can then extend the result of the previous section without the bi-piecewise linearity
assumption.

\begin{theorem}
\label{th:orthant}
If Condition~{\apm} is satisfied for $\om$ and $\R_-^d$, then there exists a strategy for $(r,H)$--approaching
$\R_-^d$ at a rate of the order of $T^{-1/5}$, with a constant per-round complexity.
\end{theorem}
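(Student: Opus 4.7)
The plan is to reduce the problem to the bi-piecewise linear setting of Section~\ref{se:bipiecewise} by replacing $\om$ with the set-valued proxy $\breve{m}$ constructed above, and then invoke Theorem~\ref{TheoGeneralCase} (or its time-adaptive variant Theorem~\ref{TheoGeneralCaseVariant}).

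First, I would show that $\breve{m}$ plays the role of an admissible, bi-piecewise linear set-valued payoff function on $\Delta(\cI) \times \cF$. By construction, $\breve{m}(\bp,\sigma) = \sum_{b \in \cB} \Phi_b(\sigma) \, \tm(\bp,b)$, with the same finite set $\cB$ and the same piecewise-linear mapping $\Phi$ as in Lemma~\ref{lm:geocorr}. Since each $\tm_k(\,\cdot\,,b)$ is an interval whose endpoints are, by the same one-dimensional argument as the one around~\eqref{eq:omlin} in the proof of Corollary~\ref{cor:ER}, minima and maxima of linear functions on $\Delta(\cI)$, the mappings $\tm_k(\,\cdot\,,b)$ are piecewise linear. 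Their Cartesian product $\tm(\,\cdot\,,b)$ is therefore piecewise linear on $\Delta(\cI)$ for each $b \in \cB$, so Assumption~\ref{As:BPL} is met by $\breve{m}$. Consequently, one can construct the finite grid $\cA \subset \Delta(\cI)$, the piecewise linear map $\Theta$, and the associated bilinear lift $\oom$ of Definition~\ref{def:oom} verbatim, relative to $\breve{m}$ rather than $\om$.

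Next I would check that Condition~{\apm} is transferred to the proxy. Property~2 of Lemma~\ref{lm:proxy} shows directly that if~\eqref{eq:APM} is satisfied for $(\om,\R_-^d)$, then it is also satisfied for $(\breve{m},\R_-^d)$: given $\sigma \in \cF$, any $\bp$ witnessing $\om(\bp,\sigma) \subseteq \R_-^d$ also witnesses $\breve{m}(\bp,\sigma) \subseteq \R_-^d$. Hence Lemma~\ref{lm:Psi} applies with $\breve{m}$ in place of $\om$, and provides an $\oom$--robust approachability strategy $\Psi$ for the closed convex set $\R_-^d$.

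I would then run the strategy of Figure~\ref{fig:strat} (or its time-adaptive variant) built on this $\Psi$ and follow the proof of Theorem~\ref{TheoGeneralCase}. The only step that requires attention is the inclusion
\[
\frac{1}{N} \sum_{n=1}^N r \bigl( \bx_n, \wh{\bq}_n \bigr) \in \frac{1}{N} \sum_{n=1}^N \sum_{a \in \cA} \theta_{n,a} \, \om\Bigl( a, \tH\bigl( \wh{\bq}_n \bigr) \Bigr),
\]
which in the original proof uses the definition of $\om$. Here Property~1 of Lemma~\ref{lm:proxy} yields $r(a,\bq) \in \om(a,\tH(\bq)) \subseteq \breve{m}(a,\tH(\bq))$ for every $a \in \cA$ and every $\bq \in \Delta(\cJ)$, so the same inclusion holds with $\breve{m}$ in place of $\om$. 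All subsequent steps (concentration of $\widetilde{\sigma}_n$ via Lemma~\ref{lm:extracted}, Lipschitzness of $\Phi$, bound via Lemma~\ref{lm:DVT}, and approachability of $\widetilde{\cC}$) are then applied literally to $\breve{m}$, and deliver the claimed $T^{-1/5}$ rate with a constant per-round complexity. The latter is preserved because $\cB$, $\cA$, $\Phi$, $\Theta$, and the linear lift $\oom$ are computed once during initialization, and because the projection onto the associated set $\widetilde{\cC} = \{\mu \in \Delta(\cA \times \cB) : \breve{m}(\mu) \subseteq \R_-^d\}$ can be carried out efficiently: $\widetilde{\cC}$ is defined by the finitely many linear inequalities $\sum_{(a,b)} \mu_{a,b} \, u_k(a,b) \leq 0$ for $k = 1,\ldots,d$, where $u_k(a,b)$ is the upper endpoint of the interval $\tm_k(a,b)$, hence is a polyhedron.

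The main conceptual obstacle is the orthant-specific Property~2 of Lemma~\ref{lm:proxy}: it is precisely the coordinatewise structure of $\R_-^d$ that allows the passage from $\om$ to the coordinate-product $\breve{m}$ without losing Condition~{\apm}. This is why the argument as stated is confined to the negative orthant, and why the subsequent subsection is needed to extend it first to polytopes (via reduction to an orthant of constraints) and then, less efficiently, to general closed convex sets.
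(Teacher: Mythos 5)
Your proposal is correct and follows essentially the same route as the paper: replace $\om$ by the coordinatewise proxy $\breve{m}$, use Property~2 of Lemma~\ref{lm:proxy} to transfer Condition~(\APM), verify Assumption~\ref{As:BPL} via the one-dimensional argument of Corollary~\ref{cor:ER} applied to each $r_k$, and rerun the proof of Theorem~\ref{TheoGeneralCase} with Property~1 supplying the needed inclusion of the true average payoff in the set-valued average. Your explicit identification of where Property~1 enters, and of the polyhedral description of $\widetilde{\cC}$ ensuring efficient projections, is a useful elaboration of details the paper leaves implicit.
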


\begin{proof}
The assumption of the theorem and Property 2.\ of Lemma~\ref{lm:proxy} imply
that Condition~{\apm} holds for $\R_-^d$ and $\breve{m}$; furthermore, the latter corresponds to a bi-piecewise linear game
as can be seen by noting, similarly to what was done in the section devoted to regret minimization (Section~\ref{se:apptoregretmin}),
that each $\tm_k$, being based on the scalar payoff function $r_k$, is a piecewise linear function.
Thus, $\breve{m}$ is also a piecewise linear function.

Therefore, the steps between Equations~\eqref{eq:term3}--\eqref{eq:term5} of the proof of Theorem~\ref{TheoGeneralCase}
(or the corresponding statements in the proof of Theorem~\ref{TheoGeneralCaseVariant})
can be adapted by replacing $\om$ and $\oom$ by, respectively, $\tm$, $\breve{m}$, and its extension corresponding to
Definition~\ref{def:oom}. The result follows.
\end{proof}

\subsection{Approachability of polytopes in the case of general games.}

If that the target set $\cC$ is a polytope, then $\cC$ can be written  as the intersection of a finite number of half-planes, i.e., there exists a finite family $\bigl\{ (e_k,\,f_k) \in \R^d \times \R, \ k \in \cK \bigr\}$ such that
\[
\cC= \bigl\{ z \in \R^d : \quad \langle z, e_k \rangle \leq f_k, \ \ \forall \, k \in \cK \bigr\}.
\]
Given the original (not necessarily bi-piecewise linear) game $(r,H)$, we introduce another
game $(r_\cC,H)$, whose payoff function $r_\cC : \cI \times \cJ \to \R^{\cK}$
is defined as
\[
\forall \, i \in \cI, \quad \forall \, j \in \cJ, \qquad
r_\cC(i,j)= \Big[\langle r(i,j), e_k \rangle - f_k  \Big]_{k \in \cK}.
\]
The following lemma follows by rewriting the above.

\begin{lemma}
Given a polytope $\cC$,
the $(r,H)$--approachability of $\cC$ and the $\bigl(r_\cC,H \bigr)$--approachability of $\R_-^d$ are equivalent in the sense that
every strategy for one problem translates to a strategy for the other problem.
In addition, Condition~{\apm} holds for $(r,H)$ and $\cC$ if and only if it holds for $\bigl(r_\cC,H \bigr)$ and $\R_-^d$.
\end{lemma}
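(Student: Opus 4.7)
The entire statement reduces to the identity $r_\cC = L \circ r$, where $L : \R^d \to \R^{\cK}$ is the affine map $L(z) = \bigl( \langle z, e_k \rangle - f_k \bigr)_{k \in \cK}$, together with the defining equivalence $z \in \cC \Leftrightarrow L(z) \in \R_-^{\cK}$. By linearity of $r$, $r_\cC$, and $L$, these relations lift to mixed actions and to empirical averages: one has $r_\cC(\bp,\bq') = L\bigl( r(\bp,\bq') \bigr)$ for all $\bp \in \Delta(\cI)$ and $\bq' \in \Delta(\cJ)$, and $(1/T)\sum_t r_\cC(I_t,J_t) = L\bigl( (1/T)\sum_t r(I_t,J_t) \bigr)$.

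The equivalence of Condition~\apm is then tautological. For any fixed $\bp$ and $\bq'$, the statement $r_\cC(\bp,\bq') \in \R_-^{\cK}$ is, by the above and the very definition of $\cC$, the same as $r(\bp,\bq') \in \cC$. Taking the conjunction over all $\bq' \in \Delta(\cJ)$ with $\tH(\bq')$ fixed, the set-valued inclusions $\om(\bp,\sigma) \subseteq \cC$ and its $r_\cC$--analogue $\subseteq \R_-^{\cK}$ are literally the same constraint, so the two instances of Condition~\apm are witnessed by the very same pairs $(\sigma,\bp)$.

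For the equivalence of approachability, the plan is to show that there exist constants $\alpha,\beta > 0$ depending only on the polytope $\cC$ such that, for every $z \in \R^d$,
\[
\alpha \, \inf_{c \in \cC} \bnorm[z-c]_2 \;\leq\; \inf_{u \in \R_-^{\cK}} \bnorm[L(z)-u]_2 \;\leq\; \beta \, \inf_{c \in \cC} \bnorm[z-c]_2.
\]
The upper bound is elementary: given $c \in \cC$, one has $\bigl(\langle z, e_k\rangle - f_k\bigr)_+ \leq \langle z-c, \, e_k\rangle \leq \|e_k\|_2 \, \|z-c\|_2$ for each $k \in \cK$, whence the right-hand inequality with $\beta = \bigl(\sum_k \|e_k\|_2^2\bigr)^{1/2}$. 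The lower bound is a Hoffman-type inequality for polyhedra: the $\ell^2$--distance from any $z \in \R^d$ to the polytope $\cC$ is controlled by the $\ell^2$--norm of the vector of positive parts of the constraint violations, with a constant $1/\alpha$ that depends only on the finite family $\bigl\{(e_k,f_k)\bigr\}_{k \in \cK}$. Applying these two bounds to $z_T = (1/T)\sum_t r(I_t,J_t)$, and analogously in the mixed-action setting, any strategy achieving a given convergence rate for one of the two approachability problems automatically yields the same rate, up to the multiplicative constants $\alpha$ and $\beta$, for the other; the uniformity over strategies of the opposing player is preserved throughout.

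The only non-routine ingredient is the Hoffman bound; everything else is direct rewriting based on the linear relation $r_\cC = L \circ r$.
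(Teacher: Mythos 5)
Your proof is correct and follows the route the paper intends (the paper offers no proof beyond ``follows by rewriting the above''). The identity $r_\cC = L\circ r$, lifted to mixed actions and to empirical averages, handles the Condition~\apm{} equivalence and the easy direction of the approachability equivalence; your explicit invocation of a Hoffman-type error bound is precisely the ingredient needed to make the converse direction (from approaching $\R_-^{\cK}$ to approaching the polytope $\cC$) quantitative and rate-preserving, a point the paper leaves implicit.
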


Via the lemma above,
Theorem~\ref{th:orthant} indicates that Condition~{\apm} for $(r,H)$ and $\cC$ is a sufficient
condition for the $(r,H)$--approachability of $\cC$ and provides a strategy to do so. (The
per-round complexity of this strategy depends in particular at least linearly on the cardinality of $\cK$.)

\subsection{Approachability of general convex sets in the case of general games.}

A general closed convex set can always be approximated arbitrarily well by a polytope (where the number of
vertices of the latter however increases as the quality of the approximation does). Therefore,
via playing in regimes, Condition~{\apm} is also seen to be sufficient to $(r,H)$--approach any general closed
convex set $\cC$. However, the computational complexity of the resulting strategy is much larger:
the per-round complexity increases over time (as the numbers of vertices of the approximating polytopes do).

\appendix

\section{An auxiliary result of calibration.}
\newcommand{\uz}{\underline{0}}
\newcommand{\uo}{\underline{1}}

{$\,$}

We prove here~\eqref{def:etacal} for a given $\eta > 0$ and do so by following the methodology
of~\citemor{MaSt10}. (Note that this result is of independent interest.)

We actually assume that the covering $\by^1,\ldots,\by^{N_{\eta}}$
is slightly finer than what was required around~\eqref{def:etacal}
and that it forms an $\eta/N_\cB$--grid of $\Delta(\cB)$,
i.e., that for all $\by \in \Delta(\cB)$, there exists $\ell \in \{ 1,\ldots,N_{\eta} \}$
such that $\norm[\by-\by^\ell]_1 \leq \eta/N_\cB$.

We recall that elements $\by \in \cB$
are denoted by $\by = (y_b)_{b \in \cB}$ and we identify $\Delta(\cB)$ with a subset
of $\R^{N_\cB}$. In particular, $\mathbb{I}_b$, the Dirac mass on a given $b \in \cB$, is a binary vector
whose only non-null component is the one indexed by $b$.
Finally, we denote by
\[
\uz = (0,\ldots,0) \qquad \mbox{and} \qquad \uo = (1,\ldots,1)
\]
the elements of $\R^{\cB}$ respectively formed by zeros and ones only.

We consider a vector-valued payoff function $C : \{ 1,\ldots,N_{\eta} \} \times \cB \to \R^{2 N_\eta N_\cB}$
defined as follows; for all $\ell \in \{ 1,\ldots,N_{\eta} \}$ and for all $b \in \cB$,
\[
C(\ell,b) = \left( \uz, \ \ldots, \ \uz, \ \ \by^\ell-\ind_b - \frac{\eta}{N_{\cB}} \uo, \ \
\ind_b-\by^\ell - \frac{\eta}{N_{\cB}} \uo, \ \ \uz, \ \ldots, \ \uz \right),
\]
which is a vector of $2N_{\eta}$ elements of $\R^{\cB}$ composed by $2(N_{\eta}-1)$
occurrences of the zero element $\uz \in \R^{\cB}$ and two non-zero elements, located in the positions
indexed by $2\ell-1$ and $2\ell$.

We now show that the closed convex set $(\R_-)^{2 N_{\eta} N_{\cB}}$ is $C$--approachable;
to do so, we resort to the characterization stated in Theorem~\ref{th:appr}. To each $\by \in \Delta(\cB)$
we will associate a pure action $\ell_{\by}$ in $\{1,\ldots,N_\eta\}$ so that $C\bigl( \ell_{\by}, \by \bigr)
\in (\R_-)^{2 N_{\eta} N_{\cB}}$; note that to satisfy the necessary and sufficient condition, it is not
necessary here to resort to mixed actions of the first player.
The index $\ell_{\by}$ is any index $\ell$ such that $\norm[\by-\by^\ell]_1 \leq \eta/N_\cB$;
such an index always exists as noted at the beginning of this proof. Indeed, one then has
in particular that for each component $b \in \cB$,
\[
\bigl| y^{\ell_{\by}}_b - y_b \bigr| \leq \norm[\by^{\ell_{\by}} - \by]_1 \leq \eta/N_\cB\,.
\]

A straightforward adaptation of the proof of Theorem~\ref{th:Bla} then yields a strategy
such that for all $\delta \in (0,1)$ and for all
strategies of the second player, with probability at least $1-\delta$,
\begin{equation}
\label{eq:apprcalibr}
\sup_{\tau \geq T} \ \inf_{c \in (\R_-)^{2 N_{\eta} N_{\cB}}}
\ \norm[c - \frac{1}{\tau} \sum_{t=1}^\tau C(L_t,\by_t) ]_2 \ \leq
2M \sqrt{\frac{2}{\delta T}}\,,
\end{equation}
where $M$ is a bound in Euclidian norm over $C$, e.g., $M = 4+2\eta$.
The quantities of interest can be rewritten as
\[
\frac{1}{\tau} \sum_{t=1}^\tau C(L_t,\by_t) = \left( \frac{N_\tau(\ell)}{\tau} \bigl( \by^\ell - \ol{\by}_\tau^\ell \bigr)
- \frac{N_\tau(\ell)}{\tau} \frac{\eta}{N_{\cB}} \uo, \
\frac{N_\tau(\ell)}{\tau} \bigl( \ol{\by}_\tau^\ell - \by^\ell\bigr)
- \frac{N_\tau(\ell)}{\tau} \frac{\eta}{N_{\cB}} \uo \right)_{\! \ell \in \{ 1,\ldots,N_\eta \}},
\]
where we recall that we denoted for all $\ell$ such that $N_\tau(\ell) > 0$
the average of their corresponding mixed actions $\by_t$ by
\[
\ol{\by}_\tau^\ell = \frac{1}{N_\tau(\ell)} \sum_{t=1}^\tau
\by_t \ind_{ \{ L_t = \ell \} }\,.
\]
The projection in $\ell^2$--norm of quantity of interest onto $(\R_-)^{2 N_{\eta} N_{\cB}}$
is formed by its non-positive components, so that its square
distance to $(\R_-)^{2 N_{\eta} N_{\cB}}$ equals
\[
\inf_{c \in (\R_-)^{2 N_{\eta} N_{\cB}}}
\ \norm[c - \frac{1}{\tau} \sum_{t=1}^\tau C(L_t,\by_t) ]_2^2
\ = \sum_{\ell=1}^{N_\eta} \left( \frac{N_\tau(\ell)}{\tau} \right)^{\! 2}
\sum_{b \in \cB} \underbrace{\left( \left( y^\ell_b - \ol{y}^\ell_{\tau,b} - \frac{\eta}{N_{\cB}} \right)_{\! +}^{\! 2} +
\left( \ol{y}^\ell_{\tau,b} - y^\ell_b - \frac{\eta}{N_{\cB}} \right)_{\! +}^{\! 2} \right)}_{
= \bigl( | \ol{y}^\ell_{\tau,b} - y^\ell_b | - \eta/N_{\cB} \bigr)_{\! +}^{\! 2}}.
\]
Therefore, our target is achieved: using first that $(\,\cdot\,)_+$ is subadditive,
then applying the Cauchy-Schwarz inequality,
\begin{eqnarray*}
\sum_{\ell = 1}^{N_\eta} \frac{N_\tau(\ell)}{\tau} \Bigl( \norm[\by^\ell - \ol{\by}_\tau]_1 - \eta \Bigr)_{\!\! +}
& \leq & \sum_{\ell=1}^{N_\eta} \frac{N_\tau(\ell)}{\tau} \sum_{b \in \cB}
\left( \bigl| y^\ell_b - \ol{y}^\ell_{\tau,b} \bigr| - \frac{\eta}{N_{\cB}} \right)_{\! +} \\
& \leq & \sqrt{N_{\eta} N_{\cB}} \,\, \sqrt{
\sum_{\ell=1}^{N_\eta} \left( \frac{N_\tau(\ell)}{\tau} \right)^{\! 2}
\sum_{b \in \cB} \left( \bigl| y^\ell_b - \ol{y}^\ell_{\tau,b} \bigr| - \frac{\eta}{N_{\cB}} \right)_{\! +}^{\! 2}} \\
& \leq & 2M \sqrt{N_{\eta} N_{\cB}} \, \sqrt{\frac{2}{\delta T}}\,,
\end{eqnarray*}
where the last inequality holds, by~\eqref{eq:apprcalibr}, for all $\tau \geq T$ with probability
at least $1-\delta$. Choosing an integer $T_\delta$
sufficiently large so that
\[
2M \sqrt{N_{\eta} N_{\cB}} \, \sqrt{\frac{2}{\delta \, T_\delta}}
\leq \delta
\]
concludes the proof of the property stated in~\eqref{def:etacal}.

\section{Proof of Lemma \ref{lm:extracted}.}
\label{sec:appLemmaProof}

{$\,$}

\begin{proof}
For all $(i,j) \in \cI \times \cJ$, the quantity $H(i,j)$ is a probability distribution over the set of signals $\cH$;
we denote by $H_s(i,j)$ the probability mass that it puts on some signal $s \in \cH$.

Equation~(\ref{eq:extracted-source}) indicates that for each pair $(i,s) \in \cI \times \cH$,
\[
\sum_{t=(n-1)L+1}^{nL} \left( \frac{\ind_{ \{ S_t = s \} } \ind_{ \{ I_t = i \} }}{p_{I_t,n}} - H_s(i,J_t) \right)
\]
is a sum of $L$ elements of a martingale difference sequence,
with respect to the filtration whose $t$-th element is generated by $\bp_n$, the pairs
$(I_s, \, S_s)$ for $s \leq t$, and $J_s$ for $s \leq t+1$.
The conditional variances of the increments are bounded by
\[
\E_t \! \left[ \left( \frac{\ind_{ \{ S_t = s \} } \ind_{ \{ I_t = i \} }}{p_{I_t,n}} \right)^{\!2}
\right] \leq \frac{1}{p_{i,n}^2} \, \E_t \bigl[ \ind_{ \{ I_t = i \} } \bigr] = \frac{1}{p_{i,n}}\,;
\]
since by definition of the strategy, $\bp_n = (1-\gamma)\,\bx_n + \gamma \, \bu$, we have that
$p_{i,n} \geq \gamma/N_{\cI}$, which shows that the sum of the conditional variances is bounded by
\[
\sum_{t=(n-1)L+1}^{nL} \mathrm{Var}_t \! \left( \frac{\ind_{ \{ S_t = s \} } \ind_{ \{ I_t = i \} }}{p_{I_t,n}} \right)
\leq \frac{L N_{\cI}}{\gamma}\,.
\]
The Bernstein-Freedman inequality (see \citemor{Fre75} or \citemor{CeLuSt06}, Lemma A.1) therefore indicates that
with probability at least $1 - \delta$,
\[
\Biggl| \frac{1}{L} \sum_{t=(n-1)L+1}^{nL} \, \frac{\ind_{ \{ S_t = s \} } \ind_{ \{ I_t = i \} }}{p_{I_t,n}}
- \underbrace{\frac{1}{L} \sum_{t=(n-1)L+1}^{nL} H_s(i,J_t)}_{= \ H_s( i, \, \wh{\bq}_n )}
\Biggr| \leq \sqrt{2 \frac{N_{\cI}}{\gamma L}
\ln \frac{2}{\delta}} + \frac{1}{3} \frac{N_\cI}{\gamma L} \ln \frac{2}{\delta}\,.
\]
Therefore, by summing the above inequalities over $i \in \cI$ and $s \in \cH$,
we get (after a union bound) that with probability at least $1 - N_{\cI} N_{\cH} \delta$,
\[
\norm[ \widetilde{\sigma}_n - \tH\bigl( \wh{\bq}_n \bigr) ]_2 \leq \sqrt{N_{\cI} N_{\cH}} \left( \sqrt{\frac{2 N_{\cI}}{\gamma L}
\ln \frac{2}{\delta}} + \frac{1}{3} \frac{N_\cI}{\gamma L} \ln \frac{2}{\delta} \right).
\]
Finally, since $\wh{\sigma}_n$ is the projection in the $\ell^2$--norm of $\widetilde{\sigma}_n$
onto the convex set $\cF$, to which $\tH\bigl( \wh{\bq}_n \bigr)$ belongs, we have
that
\[
\norm[ \wh{\sigma}_n - \tH\bigl( \wh{\bq}_n \bigr) ]_2 \leq \norm[ \widetilde{\sigma}_n - \tH\bigl( \wh{\bq}_n \bigr) ]_2\,,
\]
and this concludes the proof.
\end{proof}

\bibliographystyle{plainnat}
\bibliography{Bib-Rob-Appr}

\acknowledgments{
Shie Mannor was partially supported by the ISF under contract 890015 and the
Google Inter-university center for Electronic Markets and Auctions.
Vianney Perchet benefited from the support of the ANR under grant ANR-10-BLAN 0112.
Gilles Stoltz acknowledges support from
the French National Research Agency (ANR)
under grant EXPLO/RA (``Exploration--exploitation for efficient resource allocation'')
and by the PASCAL2 Network of Excellence under EC grant {no.} 506778.

An extended abstract of this paper appeared in the
\emph{Proceedings of the 24th Annual Conference on Learning Theory} (COLT'11),
JMLR Workshop and Conference Proceedings, Volume 19, pages 515--536, 2011.
}

\end{document}